\theoremstyle{plain}    
\newtheorem{theorem}[subsection]{Theorem}
\newtheorem{lemma}[subsection]{Lemma}
\newtheorem{proposition}[subsection]{Proposition}
\newtheorem{corollary}[subsection]{Corollary}
\theoremstyle{definition}   
\newtheorem{definition}[subsection]{Definition}
\theoremstyle{remark}   
\newtheorem{remark}[subsection]{Remark}
\newcommand{\Int}{\mathbb{Z}}   
\newcommand{\sphere}{\mathbb{S}}
\newcommand{\Q}{\mathbb{Q}}     
\newcommand{\Derived}{\mathsf{D}}   
\newcommand{\Tri}{\mathsf{T}}       
\newcommand{\loc}{\mathrm{Loc}}     
\newcommand{\thick}{\mathrm{Thick}} 
\newcommand{\compact}{\mathsf{C}}   
\newcommand{\bl}{\mathsf{B}}        
\newcommand{\al}{\mathsf{A}}        
\newcommand{\class}{\mathsf{A}}     
\newcommand{\Ker}{\mathrm{Ker}}     
\renewcommand{\Im}{\mathrm{Im}}     
\newcommand{\spaces}{\mathcal{S}}   
\newcommand{\homot}{\mathsf{Ho}}    
\newcommand{\ffpos}{\mathsf{F}}     
\newcommand{\Hom}{\mathcal{H}om}
\newcommand{\ext}{\text{Ext}}   
\newcommand{\chains}{\mathrm{C}}    
\newcommand{\hocolim}{\mathrm{hocolim}} 
\newcommand{\spec}{\mathrm{Spec}}   
\newcommand{\supp}{\mathrm{supp}}   
\newcommand{\unitmap}{\ell}         
\newcommand{\po}{\mathcal{PO}}               
\newcommand{\uu}{\mathcal{U}}       
\newcommand{\vv}{\mathcal{V}}       
\newcommand{\zz}{\mathcal{Z}}       
\newcommand{\pp}{\mathfrak{p}}      
\newcommand{\qq}{\mathfrak{q}}      
\renewcommand{\aa}{\mathfrak{a}}    
\newcommand{\builds}{\vdash}        
\newcommand{\fbuilds}{\models}      
\newcommand{\kdim}{\text{Kdim}}     
\begin{document}

\title{Stratifying derived categories of cochains on certain spaces}
\author{Shoham Shamir}
\address{Department of Mathematics, University of Bergen, 5008 Bergen, Norway}
\email{shoham.shamir@math.uib.no}
\date{\today}

\begin{abstract}
In recent years, Benson, Iyengar and Krause have developed a theory of stratification for compactly generated triangulated categories with an action of a graded commutative Noetherian ring. Stratification implies a classification of localizing and thick subcategories in terms of subsets of the prime ideal spectrum of the given ring. In this paper two stratification results are presented: one for the derived category of a commutative ring-spectrum with polynomial homotopy and another for the derived category of cochains on certain spaces. We also give the stratification of cochains on a space a topological content.
\end{abstract}

\maketitle                  


\section{Introduction}

In~\cite{HopkinsGlobalMethods}, Hopkins constructed a bijection between thick subcategories of compact objects in the derived category $\Derived(R)$ of a commutative Noetherian ring $R$, and specialization closed subsets of the prime ideal spectrum of $R$. This was improved upon in~\cite{NeemanChromaticTower}, where Neeman proved a bijection between the localizing subcategories of $\Derived(R)$ and subsets of the prime ideal spectrum $\spec R$ and used this bijection to deduce Hopkins' result.

Recently \cite{BIKsupport,BIKstratifying} Benson, Iyengar and Krause have provided a general machinery for obtaining such results. They consider a compactly generated triangulated category $\Tri$ which has an action of a graded commutative Noetherian ring $S$. For an object $X \in \Tri$ they define the \emph{support of $X$}, denoted $\supp_S X$, which is a subset of $\spec S$ -- the homogeneous prime ideal spectrum of $S$. If certain conditions are met, $\Tri$ is said to be \emph{stratified by $S$} (see Section~\ref{sec: Stratification of a derived category}). The main consequences of which are that taking support induces an inclusion preserving bijection~\cite[Theorem 4.2]{BIKstratifying}:
\[ \left\{
     \begin{array}{c}
        \text{Localizing}\\
        \text{subcategories of }\Tri\\
     \end{array}
   \right\}
   \longleftrightarrow
   \Big\{\text{subsets of } \supp_S \Tri \Big\}
   \]
and under an additional condition there is also the following bijection~{\cite[Theorem 6.1]{BIKstratifying}:
\[ \left\{
     \begin{array}{c}
        \text{Thick}\\
        \text{subcategories of }\Tri^\compact\\
     \end{array}
   \right\}
   \longleftrightarrow
   \left\{
     \begin{array}{c}
        \text{Specialization closed}\\
        \text{subsets  of } \supp_S \Tri \\
     \end{array}
   \right\}
   \]
where $\Tri^\compact$ is the traingulated subcategory of compact objects in $\Tri$. For example, in~\cite[Theorem 8.1]{BIKstratifying} it is shown that the derived category of a formal commutative dga is stratified by its cohomology, when the cohomology ring is Noetherian.

Let $k$ be a field which is either a prime field $\Int/p$ or the rational numbers $\Q$. By an $\sphere$-algebra we mean a ring-spectrum which is an algebra over the sphere spectrum in the sense of~\cite{EKMM}. Note that any dga $A$ over $k$ can be modelled by some $\sphere$-algebra $R$ whose homotopy groups, denoted $\pi_*R$, are isomorphic to the homology groups of $A$. The following is Theorem~\ref{thm: Stratification of algebra with polynomial homotopy}.
\begin{theorem}
\label{the: First theorem}
Let $R$ be a commutative coconnective $\sphere$-algebra such that $\pi_*R$ is a polynomial ring over $k$ on finitely many generators in even degrees. Then $\Derived(R)$ is stratified by the action of $\pi_*R$.
\end{theorem}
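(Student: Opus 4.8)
The plan is to run the reductions of Benson--Iyengar--Krause and thereby reduce the assertion to a minimality statement for a Koszul-dual ring spectrum whose homotopy is an exterior algebra on odd-degree generators. Write $S=\pi_*R$, a graded Noetherian ring acting canonically on $\Derived(R)$ since $R$ is commutative. First I would record that $\supp_S\Derived(R)=\spec S$: for every homogeneous prime $\pp$ one has $R\otimes^{\mathrm{L}}_R k(\pp)=k(\pp)\neq 0$ because $S_\pp\neq 0$ ($S$ is a domain), so $\pp\in\supp_S R\subseteq\supp_S\Derived(R)$, and the reverse inclusion is automatic. Because $S$ is Noetherian the local--global principle holds for this action (see Section~\ref{sec: Stratification of a derived category} and \cite{BIKstratifying}), so by the stratification criterion it suffices to prove that for every homogeneous prime $\pp\subseteq S$ the (nonzero) stratum $\Gamma_\pp\Derived(R)$ has no proper nonzero localizing subcategory.

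To get at $\Gamma_\pp\Derived(R)$ I would localize to the closed point. The localization $R_\pp$ has homotopy $S_\pp$, a graded regular local ring, and $\Gamma_\pp\Derived(R)\simeq\Gamma_{\pp S_\pp}\Derived(R_\pp)$. Its maximal ideal $\pp S_\pp$ is generated by a regular sequence of homogeneous elements $\mathbf y=(y_1,\dots,y_d)$, and --- this is where the hypothesis enters --- since every polynomial generator $x_i$ of $S$ has even degree, $S$, and hence each $y_i$, lies in even degrees. The finite Koszul complex $K=R_\pp/\!\!/\mathbf y$ is then a perfect $R_\pp$-module with $\pi_*K\cong k(\pp):=S_\pp/\pp S_\pp$, and it generates $\Gamma_{\pp S_\pp}\Derived(R_\pp)$ (the finite and the stable Koszul complexes on $\mathbf y$ build one another). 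Being a compact generator, $K$ yields an equivalence $\Gamma_\pp\Derived(R)\simeq\Derived(E^{\mathrm{op}})$ with $E=\End_{R_\pp}(K)$. Since $k(\pp)$ has finite projective dimension $d$ over $S_\pp$, the universal-coefficient spectral sequence $\ext^{*}_{S_\pp}(k(\pp),k(\pp))\Rightarrow\pi_*E$ is concentrated in finitely many degrees and converges; its $E_2$-term is $\Lambda_{k(\pp)}(\xi_1,\dots,\xi_d)$, the exterior algebra on classes $\xi_i$ of degree $-(|y_i|+1)$, which are \emph{odd} because each $|y_i|$ is even. In particular $\pi_*E$ is a finite-dimensional graded $k(\pp)$-algebra whose nilradical is maximal with quotient $k(\pp)$, so $\spec\pi_*E$ is a single point.

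It then remains to prove the crux: $\Derived(E)$ has no proper nonzero localizing subcategory whenever $E$ is a ring spectrum with $\pi_*E$ a finite-dimensional graded $L$-algebra ($L$ a graded field, here $L=k(\pp)$) generated over $L$ by odd-degree square-zero elements. I would argue in two steps. (i) Since $\pi_*E$ is bounded, $E$ is built by a finite Postnikov tower from the $E$-modules $H\pi_nE$, each of which is a sum of shifts of $HL$ because $L$ is a graded field; hence $HL$ generates $\Derived(E)$. (ii) For the reverse inclusion I would show $HL\in\loc_E(X)$ for every nonzero $X$: because the augmentation ideal of $\pi_*E$ is nilpotent there is no nontrivial ``generic'' part to split off, and the Koszul dual $\End_E(HL)$ has \emph{polynomial} homotopy $L[t_1,\dots,t_d]$ with the $t_i$ in even degrees (and is closely related to $R_\pp$ itself, which is what drives the induction); exploiting this --- first reducing to bounded $X$ via the nilpotent filtration on $\pi_*X$ together with a truncation argument, then inducting on $d$ and computing with $\ext^*_{\Lambda_L(\xi)}$-modules --- one produces $HL$ inside $\loc_E(X)$. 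Combining (i) and (ii) gives minimality of each stratum, and with the local--global principle this shows $\Derived(R)$ is stratified by $S=\pi_*R$. I expect step~(ii) to be the main obstacle: exhibiting $HL$ as a generator is soft, but genuine minimality is not, and it is here that the even-degree hypothesis is used decisively --- it forces the Koszul-dual exterior algebra to have odd generators, so that its derived category admits the inductive analysis above.
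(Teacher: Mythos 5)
Your reductions are correct up to the crux, and you are right that the even-degree hypothesis manifests itself at precisely the point where one needs a regular sequence in even degrees at the localization $R_\pp$. But the route you take from there diverges from the paper's, and the step you label as the main obstacle---step~(ii), producing $HL$ inside $\loc_E(X)$ for every nonzero $X$ over the Koszul-dual $E$ with exterior homotopy---is genuinely left unproved. Your sketch ("nilpotent filtration, truncation, induct on $d$, compute with $\ext^*_{\Lambda_L(\xi)}$-modules") is an outline of a plausible strategy, but it has comparable difficulty to the theorem itself: establishing minimality of $\Derived(E)$ for a ring spectrum whose homotopy is exterior on odd generators is, via Koszul duality, essentially the statement you set out to prove in the polynomial case, so without a carefully constructed induction that actually closes the loop the argument as written is circular. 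There is also a secondary point you gloss over: the universal coefficient spectral sequence gives $\Lambda_{k(\pp)}(\xi_1,\dots,\xi_d)$ only as an associated graded of $\pi_*E$, and identifying the multiplicative structure of $\pi_*E$ itself (no hidden extensions, exterior on the nose) requires an argument.

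The paper sidesteps the Koszul-dual detour entirely. Instead of passing to $E=\End_{R_\pp}(K)$, it puts a \emph{ring object} structure (a not-necessarily-associative algebra in the homotopy category, in the sense of \cite[Definition V.2.1]{EKMM}) directly on the Koszul object $R_\pp/X$. This is where the even-degree hypothesis enters: \cite[Theorem V.2.6]{EKMM} requires $\pi^*R_\pp$ to be in even degrees for $R_\pp/x$ to carry such a structure. The homotopy $\pi^*(R_\pp/X)$ is a graded field, and the key observation (Corollary~\ref{cor: The localizing subactegory generated by a field like is minimal}, resting on Lemmas~\ref{lem: Ring object maps surject module homotopy}--\ref{lem: hom (A/X,-) is like A/X otimes}) is that for any nonzero $M$ built by $R_\pp/X$, the object $M/X$ is a nonzero direct sum of shifts of $R_\pp/X$, so $R_\pp/X$ is a retract of $M/X$ and hence $M \vdash R_\pp/X$. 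This gives minimality of $\loc_{\Derived(R_\pp)}(R_\pp/X)$ with no induction, no Koszul duality, and no structural analysis of exterior algebras; one then checks that $\Gamma_\pp R$ lies in this localizing subcategory because its homotopy is $(X)$-power torsion (Proposition~\ref{pro: Cellular A/X objects are power torsion}). Adopting the ring-object technique would let you bypass step~(ii) entirely.
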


This generalizes \cite[Theorem 5.2]{BIKgroupStratifying}, which shows that the derived category of a formal polynomial dga over a field is stratified by the action of its cohomology ring. Note that the $\sphere$-algebras satisfying Theorem~\ref{the: First theorem} above can be very far from being formal. For example, if $k$ is a prime field the theorem above shows that the derived category of $\chains^*(BSU(n);k)$ -- the dga of singular cochains on the classifying space of $SU(n)$, is stratified by $H^*(BSU(n);k)$ (see~\ref{sub: Classifying spaces}). This dga is neither formal nor is it strictly commutative.

It should be noted however that even when $k=\Int/2$ Theorem~\ref{the: First theorem} still requires the generators to be only in even degrees. This is in contrast with \cite[Theorem 5.2]{BIKgroupStratifying}, where odd degree generators are allowed when $k=\Int/2$.

We now turn to the second stratification result. For a connected space $X$ we denote by $\chains^*X$ the cochains algebra of $X$ with coefficients in $k$. The derived category of $\chains^*X$ has a natural action of the cohomology of $X$ with coefficients in $k$. Benson, Iyengar and Krause have shown that when $k$ is the prime field $\Int/p$ and $G$ is a finite $p$-group then $\Derived(\chains^*(BG))$ is stratified by the action of $H^*(BG;k)$ \cite[Theorem 10.6]{BIKgroupStratifying}. Our second result follows similar lines.

A connected space is called \emph{spherically odd complete intersections (soci)} if it can be built from a space with even degree polynomial cohomology using finitely many spherical fibrations, where these spheres are simply connected and of odd dimension, see Definition~\ref{def: Soci} for details. Theorem~\ref{the: Stratification of cochains on soci} reads:
\begin{theorem}
\label{the: Second theorem}
Let $X$ be an soci space, then $\Derived(\chains^*X)$ is stratified by the canonical action of $H^*(X;k)$.
\end{theorem}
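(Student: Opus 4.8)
The plan is to induct on the number of spherical fibrations used to build $X$ from its polynomial base, with Theorem~\ref{the: First theorem} supplying the base of the induction and a change-of-rings argument across each fibration supplying the inductive step. For the base, suppose $X$ has polynomial cohomology $H^*(X;k)=k[x_1,\dots,x_m]$ on finitely many generators in even degrees. Cochains on a space form a commutative coconnective $\sphere$-algebra, and here $\pi_*\chains^*X\cong H^{-*}(X;k)$ is a polynomial ring over $k$ on finitely many even-degree generators, so Theorem~\ref{thm: Stratification of algebra with polynomial homotopy} applies: $\Derived(\chains^*X)$ is stratified by $\pi_*\chains^*X$, which is precisely the canonical action of $H^*(X;k)$. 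In particular $H^*(X;k)$ is Noetherian in this case.

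Now let $S^{2n-1}\to E\xrightarrow{p}B$ be one of the spherical fibrations occurring in the construction of $X$, and assume inductively that $B$ is simply connected (inherited along the tower) and that $\Derived(\chains^*B)$ is stratified by the Noetherian ring $H^*(B;k)$; put $A=\chains^*B$ and $A'=\chains^*E$. The first step is to describe $A'$ as an $A$-module. Since $B$ is simply connected the fibration is $k$-orientable, so the Thom isomorphism for its Thom space turns the defining cofibre sequence of the latter into a cofibre sequence of $A$-modules
\[
  \Sigma^{-2n}A \xrightarrow{\zeta} A \longrightarrow A' ,
\]
in which $\zeta\in H^{2n}(B;k)$ is the Euler class of the fibration and $A\to A'$ is $p^*$. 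Two consequences drive the rest of the argument. First, $\zeta$ acts as zero on the $A$-module $A'$ (because $p^*\zeta=0$), so tensoring the sequence over $A$ with $A'$ gives
\[
  A'\otimes^{\mathbb{L}}_A A' \;\simeq\; \operatorname{cofib}\bigl(0\colon\Sigma^{-2n}A'\to A'\bigr)\;\simeq\; A'\oplus\Sigma^{1-2n}A' ;
\]
equivalently, the composite $\iota\rho$ of restriction of scalars $\rho\colon\Derived(A')\to\Derived(A)$ with its left adjoint $\iota=A'\otimes^{\mathbb{L}}_A(-)$ is naturally the direct sum of the identity functor and a shift of it. Second, the Gysin long exact sequence of the fibration exhibits $H^*(E;k)$ as module-finite over $H^*(B;k)/(\zeta)$, so $H^*(E;k)$ is Noetherian; and the canonical action of $H^*(B;k)$ on $\Derived(A')$, being through $p^*\colon H^*(B;k)\to H^*(E;k)$, factors through $H^*(B;k)/(\zeta)$.

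It remains to propagate stratification from $\Derived(A)$ to $\Derived(A')$. The functors $\rho$ and $\iota$ are triangulated, coproduct-preserving, and compatible with the canonical $H^*(B;k)$-actions, hence commute with the local cohomology functors $\Gamma_{\pp}$, $\pp\in\spec H^*(B;k)$; the local--global principle holds for $\Derived(A')$ over $H^*(B;k)$ because that ring is Noetherian \cite{BIKsupport}. From the splitting of $\iota\rho$ one reads off that $\rho$ takes nonzero objects to nonzero objects and that every object of $\Derived(A')$ is a retract of $\iota\rho$ of itself. Now fix $\pp$; if $\Gamma_{\pp}\Derived(A')=0$ there is nothing to check, and otherwise pick a nonzero $N\in\Gamma_{\pp}\Derived(A')$. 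Then $\rho N$ is a nonzero object of $\Gamma_{\pp}\Derived(A)$, a category which — being nonzero — is minimal by the stratification hypothesis on $\Derived(\chains^*B)$, so the localizing subcategory it generates there is all of $\Gamma_{\pp}\Derived(A)$; applying $\iota$, for every $M\in\Gamma_{\pp}\Derived(A')$ the object $\iota\rho M$ lies in the localizing subcategory generated by $\iota\rho N$, hence in the one generated by $N$ (as $\iota\rho N\simeq N\oplus\Sigma^{1-2n}N$), and so does its retract $M$. Thus $\Gamma_{\pp}\Derived(A')$ is minimal for every $\pp$, which together with the local--global principle means that $\Derived(\chains^*E)$ is stratified by $H^*(B;k)$ (the criterion of Section~\ref{sec: Stratification of a derived category}). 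Since this action factors as $H^*(B;k)\twoheadrightarrow H^*(B;k)/(\zeta)\hookrightarrow H^*(E;k)$, with the last map module-finite and $H^*(E;k)$ Noetherian, transferring stratification first to the quotient ring and then along this finite extension of the acting ring \cite{BIKstratifying} shows that $\Derived(\chains^*E)$ is stratified by the canonical action of $H^*(E;k)$. Iterating over the spherical fibrations in the soci tower (Definition~\ref{def: Soci}) completes the induction.

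The main obstacle is the inductive step, and within it the module-level identification $A'\simeq\operatorname{cofib}(\zeta)$ over $A=\chains^*B$: one has to produce the Euler class of a possibly non-linear spherical fibration and realise the Thom isomorphism in the $\sphere$-algebra setting, which is where the simple-connectivity hypothesis enters (to guarantee $k$-orientability and the convergence of the Eilenberg--Moore/Gysin machinery). The hypothesis that the fibres are odd-dimensional is used twice: it places the Euler class in an even degree, so that killing it keeps us within the class of rings built from even polynomial rings, and it is exactly the parity for which Theorem~\ref{the: First theorem} — the engine of the base case — is available. Granting the cofibre-sequence description, the propagation of stratification is formal: it uses only that $\rho$ and $\iota$ form a "finite" adjunction (the splitting of $\iota\rho$), the minimality of the $\pp$-local pieces downstairs, the local--global principle for Noetherian actions, and the behaviour of stratification under finite extensions of the acting ring, all of which belong to the Benson--Iyengar--Krause framework.
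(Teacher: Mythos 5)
Your overall scheme is the same as the paper's: induct along the tower of odd spherical fibrations, start from Theorem~\ref{thm: Stratification of algebra with polynomial homotopy}, transfer minimality of the $\Gamma_\pp$-pieces across the extension-restriction adjunction for $\chains^*B \to \chains^*E$, and then enlarge the acting ring from (the image of) $H^*B$ to the Noetherian ring $H^*E$. The difficulty is in your central claim that $\iota\rho \simeq \mathrm{id}\oplus\Sigma^{1-2n}\mathrm{id}$ on $\Derived(A')$. What the Gysin/Thom identification $A'\simeq \mathrm{cofib}(\zeta\colon\Sigma^{-2n}A\to A)$ gives you, after applying $-\otimes_A M$ for an $A'$-module $M$, is a triangle $\Sigma^{-2n}M \to M \to A'\otimes_A M$ in which the $A'$-action on the third term is the one coming \emph{through $M$}; since $p^*\zeta=0$ this triangle indeed splits. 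But $\iota\rho M = A'\otimes_A M$, the object you actually obtain when you apply $\iota$ to a building relation established downstairs in $\Derived(A)$, carries the $A'$-action through the \emph{$A'$ tensor factor}, and these two structures are not isomorphic in general. Passing between them would require a splitting of $A'\otimes_A A'$ compatible with both factors (a separability-type statement), which fails here: already for the trivial fibration $E=B\times S^{2n-1}$, taking $M=\chains^*B$ regarded as an $A'$-module via a section $B\to E$, one has $\iota\rho M \simeq A'$, a free rank-one $A'$-module on whose homotopy the exterior class of degree $2n-1$ acts nontrivially, whereas $M\oplus\Sigma^{1-2n}M$ has trivial action; in particular $M$ is not even a retract of $\iota\rho M$ in $\Derived(A')$, so both consequences you draw from the splitting (that $\iota\rho N\in\loc_{\Derived(A')}(N)$ and that every $M$ is a retract of $\iota\rho M$) are unjustified, and the second is false as stated.

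These two facts are exactly the nontrivial content that the paper supplies by other means: that $M$ and $A'\otimes_A M$ build each other in $\Derived(A')$ is proved by citing \cite[Proposition 10.5]{GreenleesHessShamir} (rational case) and \cite[Theorem 7.3]{BensonGreenleesShamir} (mod $p$ case) for the direction ``$M$ finitely builds $A'\otimes_A M$'', and by the diagonal $X\to X\times_Y X$ (making $\chains^*X$ a $\chains^*(X\times_YX)$-algebra) for the direction ``$A'\otimes_A M$ builds $M$''; neither follows from tensoring the Gysin cofibre sequence with $M$. If you replace your splitting claim by these two build-relations, the rest of your minimality transfer goes through and is essentially the paper's argument (the paper packages it as a bijection between localizing subcategories of $\Derived(A')$ and of $\loc_{\Derived(A)}(A')$, and works with the image ring $g(H^*B)$ before enlarging to $H^*E$, but that difference is cosmetic). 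A further small point: EM-type spaces in the mod-$p$ case may have a nontrivial finite $p$-group as fundamental group, so you cannot assume $B$ simply connected along the tower; orientability of the fibration still holds because $k[\pi_1]$ has a unique simple module, which is how the paper argues in its Noetherianness lemma.
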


Recall that in the rational case $\chains^*X$ can always be modeled by a commutative dga. So Theorem~\ref{the: Second theorem} can be viewed as extending~\cite[Theorem 8.1]{BIKstratifying} for $k=\Q$. See Example~\ref{sub: A rational example} for an soci space whose cochains algebra cannot be modeled by a formal dga. Note that in the rational case every space with Noetherian rational cohomology and finite dimensional rational homotopy is an soci space; this follows from~\cite[Lemma 8.2 \& Theorem 7.5]{GreenleesHessShamir}.

Before describing the topological aspect of this result we make one last remark regarding stratification. The attraction of stratification, beyond yielding a classification result, is that it automatically implies many other results about the structure of the triangulated category. Such results can be found in the work of Benson, Iyengar and Krause, especially in~\cite{BIKstratifying}. For example, let $R$ be an $\sphere$-algebra and suppose that either $R$ satisfies Theorem~\ref{the: First theorem} or that $R=\chains^*B$ for some soci space $B$. Then~\cite[Theorem 6.3]{BIKstratifying} together with Theorems~\ref{the: First theorem} and~\ref{the: Second theorem} imply that the derived category of $R$ satisfies the telescope conjecture.

Theorem~\ref{the: Second theorem} has a topological content arising from the following trivial observation. Fix a connected space $B$. Recall that a space over $B$ is simply a map $f:X \longrightarrow B$. The map $f$ gives $\chains^*X$ a $\chains^*B$-module structure and in this manner we consider $\chains^*X$ as an object in the derived category $\Derived(\chains^*B)$. We say $X$ is a \emph{finitely fibred space over $B$} if $\chains^*X$ is a compact object in $\Derived(\chains^*B)$. Suppose $f:X \longrightarrow B$ and $g:Y \longrightarrow B$ are two finitely fibred spaces over $B$ and let $\phi:X \to Y$ be a morphism of spaces over $B$, i.e. a map $\phi:X\longrightarrow Y$ satisfying $g\phi=f$. Then it is easy to see that the thick subcategory of $\Derived(\chains^*B)^\compact$ generated by $\chains^*X$ is contained in the thick subcategory generated by $\chains^*Y$.

To employ this observation we define a partial order on finitely fibred spaces over $B$. Roughly speaking this is done by saying $X \preceq Y$ if there is a morphism $\phi:X \to Y$ of spaces over $B$. The full construction of this partially ordered set, denoted $\ffpos/B$, is described in Section~\ref{sec: Finitely fibred spaces over B}. Using the observation above it is easy to construct of a map of partially ordered sets (see Lemma~\ref{lem: Map from finitely fibred poset}):
\[ \ffpos/B \longrightarrow
   \left\{
     \begin{array}{c}
        \text{Thick subcategories}\\
        \text{of }\Derived(\chains^*B)^\compact\\
     \end{array}
   \right\}
\]

When $\Derived(\chains^*B)$ is stratified by the action of the cohomology of $B$, we can give a good description of the fibres of this map. This is done, albeit in an implicit manner, in Theorem~\ref{the: Reduced finitely fibred spaces}. Since this description is quite involved, let us instead describe the main consequence of it.

Let $f:X \longrightarrow B$ be a finitely fibred space over $B$ where $f$ is a fibration. Define $\po X$ to be the homotopy pushout $X \cup_{X \times_B X} X$. This construction yields a finitely fibred space over $B$, see Lemma~\ref{lem: Topological realization of R/Isquared}. Corollary~\ref{cor: Topological consequence of stratification} reads:
\begin{corollary}
Suppose that $\Derived(\chains^*B)$ is stratified by the natural action of $H^*B$ and let $X$ and $Y$ be finitely fibred spaces over $B$. If $\supp_{H^*(B;k)} H^*(X;k) \subset \supp_{H^*(B;k)} H^*(Y;k)$ then for $m>>0$ the projection $\po^m Y \times_B X \to X$ induces an injection on cohomology.
\end{corollary}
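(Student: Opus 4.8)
The plan is to combine the stratification with the geometry of the $\po$-construction. Since $\Derived(\chains^*B)$ is stratified by $H^*B$, the classification of thick subcategories \cite[Theorem~6.1]{BIKstratifying} shows that the hypothesis $\supp_{H^*B}H^*X\subseteq\supp_{H^*B}H^*Y$ is equivalent to $\chains^*X\in\thick(\chains^*Y)$ (both supports are closed, as $H^*X$ and $H^*Y$ are finitely generated over the Noetherian ring $H^*B$, $\chains^*X$ and $\chains^*Y$ being compact). Set $\mathcal V:=\supp_{H^*B}H^*Y$. What has to be proved is that the canonical map
$c_m\colon\chains^*X\longrightarrow\chains^*(\po^mY\times_BX)\simeq\chains^*(\po^mY)\otimes_{\chains^*B}\chains^*X$,
induced by the projection $\po^mY\times_BX\to X$ (equivalently by the structure map $\chains^*B\to\chains^*(\po^mY)$), is injective on cohomology for $m\gg0$. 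I will aim for the stronger algebraic assertion that, for $m\gg0$, $c_m$ exhibits $\chains^*X$ as a retract of $\chains^*(\po^mY\times_BX)$ in $\Derived(\chains^*B)$; such a map is a split monomorphism, hence injective on cohomology.

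The essential point is to let $m$ vary. The canonical inclusions $Z\hookrightarrow\po Z$ assemble into a tower $\cdots\to\po^{m+1}Y\to\po^mY\to\cdots\to Y$ of spaces over $B$, and hence into a tower of structure maps $\chains^*B\to\chains^*(\po^mY)$. Iterating Lemma~\ref{lem: Topological realization of R/Isquared} identifies $\chains^*(\po^mY)$ with the cofibre of the canonical map $I^{\otimes 2^m}\to\chains^*B$, where $I:=\mathrm{fib}(\chains^*B\to\chains^*Y)$; together with a standard cofinality argument this shows that $\mathrm{holim}_m\chains^*(\po^mY)$ is the completion of $\chains^*B$ along $\mathcal V$, with the structure maps being the completion maps. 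Because $\chains^*X$ is perfect, $-\otimes_{\chains^*B}\chains^*X$ preserves this homotopy limit, so $\mathrm{holim}_m\chains^*(\po^mY\times_BX)$ is the completion of $\chains^*X$ along $\mathcal V$. But $H^*X$ is a finitely generated $H^*B$-module supported on $\mathcal V$, hence annihilated by a power of any defining ideal of $\mathcal V$; so $\chains^*X$ is already complete along $\mathcal V$, and the canonical map $\chains^*X\to\mathrm{holim}_m\chains^*(\po^mY\times_BX)$ is an equivalence. Consequently the tower of fibres $W_m:=\mathrm{fib}(c_m)$ has vanishing homotopy limit.

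It remains to pass from ``$\mathrm{holim}_mW_m=0$'' to ``$W_m\simeq0$ for $m\gg0$'', equivalently to show that $c_m$ is a split monomorphism once $m$ is large; I expect this to be the crux. On homotopy groups it is tractable: each $\pi_*W_m$ is a finitely generated graded $H^*B$-module, and a tower of finitely generated modules over a Noetherian ring with vanishing $\lim$ and $\lim^1$ is pro-zero — the stable images of its transition maps form a tower with surjective transitions and vanishing limit, hence are zero. The genuine obstacle is to lift this pro-vanishing of homotopy to pro-vanishing of the objects $W_m$ in $\Derived(\chains^*B)$, which is not formal, since the $W_m$ are not built from $\chains^*B$ in a number of steps bounded independently of $m$. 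This is precisely the information provided by the analysis of the fibres of $\ffpos/B\to\{\text{thick subcategories of }\Derived(\chains^*B)^{\compact}\}$ in Theorem~\ref{the: Reduced finitely fibred spaces}: one uses there that fibre products over $B$ realize $\otimes_{\chains^*B}$ and that iterating $\po$ absorbs the retracts implicit in passing to a thick subcategory, in order to produce, for $m\gg0$, a finitely fibred space $X'$ over $B$ together with a map $q\colon X'\to X$ over $B$ inducing an isomorphism on cohomology and a map $\psi\colon X'\to\po^mY$ over $B$. The pair $(\psi,q)\colon X'\to\po^mY\times_BX$ then satisfies $p\circ(\psi,q)=q$ for the projection $p\colon\po^mY\times_BX\to X$, so on cohomology $q^*=(\psi,q)^*\circ c_m$; since $q^*$ is an isomorphism, $c_m$ is injective on cohomology, as required.
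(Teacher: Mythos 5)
Your opening reduction is correct and matches the paper: stratification together with Lemma~\ref{lem: Support of finitely fibred spaces} turns the support hypothesis into $\chains^*X\in\thick_{\Derived}(\chains^*Y)$, and iterating Lemma~\ref{lem: Topological realization of R/Isquared} identifies $\chains^*(\po^m Y)$ with $R/I^{\otimes 2^m}$. After that, however, your argument diverges from the paper and does not close.

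The paper's route is via the $\hocolim$ colocalization, not the $\mathrm{holim}$ completion. Because $\chains^*X$ is compact and built by $A:=\chains^*Y$, Theorem~\ref{thm: Colocalization as hom R/Itensorn} gives an isomorphism $\hocolim_n\Hom_R(R/I^{\otimes n},\chains^*X)\xrightarrow{\sim}\chains^*X$; compactness lets one split the inverse through a finite stage, so $\chains^*X$ is a retract of $N:=\Hom_R(R/I^{\otimes n},\chains^*X)$ for some $n$ (Corollary~\ref{cor: Retract of R/Iotimesn module}). Since $N$ is an $R/I^{\otimes n}$-module, a short diagram chase with the unit $u_n$ then produces a $\pi^*$-left-inverse to $u_n\otimes 1_{\chains^*X}$ (Lemma~\ref{lem: Retract of R/Itensorn when cellular}); taking $n=2^m$ converts this into the statement about $\po^m Y\times_B X\to X$. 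Your proposal instead tries to show $\mathrm{holim}_m W_m\simeq 0$ for the tower of fibres and then pass to an eventual statement. There are two problems with this.

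First, a correctness issue in the reduction: you assert that ``$W_m\simeq 0$ for $m\gg0$'' is \emph{equivalent} to $c_m$ being a split monomorphism. It isn't. If $W_m=\mathrm{fib}(c_m)=0$ then $c_m$ is an isomorphism, which is much stronger (and in fact false here in general); a split monomorphism has a nonzero fibre unless its complement vanishes. So the target you set yourself is not what you actually need.

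Second, and more seriously, the step you yourself flag as ``the crux'' --- promoting pro-vanishing of $\pi_*W_m$ to the desired conclusion about the objects $W_m$ --- is left unproved, and your attempt to fill it by invoking Theorem~\ref{the: Reduced finitely fibred spaces} is circular: that theorem is proved \emph{from} Lemma~\ref{lem: Psi yields cohomology injective map}, which is precisely the statement you are trying to establish (the corollary is the lemma restated via the support identification). Nothing in the statement or proof of Theorem~\ref{the: Reduced finitely fibred spaces} produces the auxiliary space $X'$ with the properties you need, and no independent construction of $X'$ is offered. The correct way to discharge the crux is the $\hocolim$/compactness/retraction mechanism of Section~\ref{sec: Colocalization methods}; the $\mathrm{holim}$ picture you set up is a reasonable dual heuristic, but by itself it does not supply the finiteness needed to descend from the limit to a single finite stage, which is exactly where compactness enters in the paper's argument.
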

Note that the support mentioned in the corollary is the usual support of graded modules over a graded ring.

\subsection*{Organization of this paper}
We start in Section~\ref{sec: Stratification of a derived category} by giving all the necessary background into the work of Benson, Iyengar and Krause.

The following sections collect results necessary for the proofs of the main theorems. In Section~\ref{sec: Algebras arising from localizations} we consider Bousfield localizations of a commutative $\sphere$-algebra. Next, in Section~\ref{sec: Ring objects arising from regular sequences}, we examine ring objects and the localizing subcategories they generate. Such objects arise from regular sequences of non-zero divisors. Section~\ref{sec: Colocalization methods} gives two methods for generating colocalization functors.

The necessary conventions regarding spaces and their cochain algebras are given in Section~\ref{sec: Spaces and cochains}. This is followed by the proofs of Theorems~\ref{thm: Stratification of algebra with polynomial homotopy} and~\ref{the: Stratification of cochains on soci} in Section~\ref{sec: Stratifying spherically odd complete intersections spaces}.

The partially ordered set of finitely fibred spaces over a given base space is studied in~\ref{sec: Finitely fibred spaces over B}. Finally we provide some examples in Section~\ref{sec: Examples}.

\subsection*{Notation and terminology}
Much of the work in this paper is carried out in the derived category of a commutative $\sphere$-algebra $R$, denoted $\Derived(R)$. One should not be deterred by the use of such heavy machinery; the main properties of $\Derived(R)$ we use are that it is a triangulated category with a symmetric monoidal product and that $R$ is a compact generator of $\Derived(R)$. The symmetric monoidal product of $\Derived(R)$ is customarily denoted $-\wedge_R-$, however following~\cite{DwyerGreenleesIyengar} we shall denote this by $-\otimes_R-$. The unit for this tensor product is of course $R$ itself.

The sphere spectrum $\sphere$ is a highly structured topological ring-spectrum constructed in~\cite{EKMM}. Roughly speaking a commutative $\sphere$-algebra is a ring-spectrum which has a multiplication that is associative and commutative up to infinitely many homotopies. Some of the $\sphere$-algebras that come into play in this paper will have a dga counterpart whose derived category is equivalent to that of the original $\sphere$-algebra. However, these dga counterparts will very often not be commutative at all. The main place where the properties of commutative $\sphere$-algebras are employed is in Section~\ref{sec: Algebras arising from localizations} where we use the fact that the Bousfield localization of a commutative $\sphere$-algebra is again a commutative $\sphere$-algebra.

We follow~\cite{BIKsupport} and~\cite{BIKstratifying} in both notation and terminology pertaining to triangulated categories. One exception to this are the \emph{homotopy groups} of an object $X$ of the derived category $\Derived(R)$, defined by
\[ \pi^nX=\hom_{\Derived(R)} (R,\Sigma^n X) = \hom^n_{\Derived(R)} (R,X)\]
where $R$ is a dga or an $\sphere$-algebra. Note that when $R$ is a dga $\pi^*X$ is simply the cohomology of $X$, and when $R$ is an $\sphere$-algebra then $\pi^*X$ is simply a regrading of $\pi_*X$ -- the stable homotopy groups of $X$. We shall follow the convention that superscripts denote codegrees and subscripts denote degrees and therefore $X_{\Box} = X^{-\Box}$.

Throughout this paper we work over a ground field $k$ which is either a prime field or the rational numbers. In particular, all cohomology groups of spaces are taken with coefficients in $k$.

\subsection*{Acknowledgements}
I am grateful to Moty Katzman for his invaluable aid in the proof of Proposition~\ref{pro: DJ spaces which are soci} regarding Davis-Januszkiewicz spaces, and also for John Greenlees who suggested looking into Davis-Januszkiewicz spaces in the first place. I would also like to convey my gratitude to Dave Benson, Srikanth Iyengar and Henning Krause for inviting me to participate in the Oberwolfach seminar "Representations of Finite Groups: Local Cohomology and Support", which inspired this work. 		I am also grateful to Srikanth Iyengar for his very helpful comments.

\section{Stratification of a derived category}
\label{sec: Stratification of a derived category}

Throughout this section $\Derived$ will denote the derived category of a dga or an $\sphere$-algebra $R$. Hence $\Derived$ is a triangulated category with several nice properties: it has arbitrary coproducts and it has a compact generator (see below). In particular, $\Derived$ is compactly generated. This section recalls the definitions and properties of stratification given in~\cite{BIKstratifying}.

\subsection*{Localizing subcategories and localization}
A \emph{thick subcategory} of $\Derived$ is a full triangulated subcategory closed under retracts. A \emph{localizing subcategory} is a thick subcategory that is also closed under taking coproducts. The thick (resp. localizing) subcategory \emph{generated} by a given class of objects $\class$ in $\Derived$ is the smallest thick (resp. localizing) subcategory containing $\class$, this subcategory is denoted by $\thick_\Derived(\class)$ (resp. $\loc_\Derived(\class)$).

\begin{remark}
We shall also employ the following terminology from \cite{DwyerGreenleesIyengar}. For $X$ and $Y$ in $\Derived$ we say that $X$ \emph{builds} $Y$, denoted $X \builds Y$, if $Y \in \loc_\Derived (X)$. We say that $X$ \emph{finitely builds} $Y$ if $Y\in \thick_\Derived(X)$ and denote it by $X \fbuilds Y$.
\end{remark}

An object $C \in \Derived$ is called \emph{compact} if $\hom_\Derived(C,-)$ commutes with coproducts. The full subcategory of compact objects in $\Derived$ will be denoted by $\Derived^\compact$; it is clearly a thick subcategory. Since $\Derived$ is the derived category of $R$, the thick subcategory generated by $R$ is $\Derived^\compact$ and the localizing subcategory generated by $R$ is $\Derived$.

Note that if $\class \subset \Derived^\compact$ is a set of compact objects then
\[ \thick_\Derived(\class) = \loc_\Derived (\class) \cap \Derived^\compact\]
This follows from a result of Neeman~\cite[Lemma 2.2]{NeemanConnectLocalSmash}. In particular this implies that if $X,Y \in \Derived^\compact$ and $X \builds Y$ then $X \fbuilds Y$.

A \emph{localization functor} on $\Derived$ is an exact functor $L: \Derived \to \Derived$ together with a natural morphism $\eta:1_\Derived \to L$ such that $L \eta=\eta L$ and $L\eta$ is a natural isomorphism. One important property of a localization functor $L$ is that $\eta$ induces a natural isomorphism
\[ \hom_\Derived(X,LY) \cong \hom_\Derived(LX,LY) \]
Dually there is the concept of a \emph{colocalization functor}, which is a functor $\Gamma: \Derived \to \Derived$ with a natural morphism $\mu:\Gamma \to 1_\Derived$ such that $\Gamma \mu = \mu \Gamma$ and $\Gamma \mu$ is a natural isomorphism. The natural morphism $\mu$ induces an isomorphism
\[\hom_\Derived(\Gamma X,Y) \cong \hom_\Derived(\Gamma X,\Gamma Y) \]

Given a functor $F: \Derived \to \Derived$ define the \emph{kernel of $F$}, denoted $\Ker F$, to be the full subcategory of $\Derived$ whose objects are those satisfying $FX\cong 0$. The \emph{essential image} of $F$ is the full subcategory $\Im F$ consisting of objects $X$ such that $X \cong FY$ for some $Y$.

It is well known that any localization functor $L$ gives rise to a colocalization functor $\Gamma$ such that for any $X \in \Derived$ there is an exact triangle
\[ \Gamma X \xrightarrow{\mu_X} X \xrightarrow{\eta_X} LX \]
and such that $\Ker L = \Im \Gamma$ and $\Ker \Gamma = \Im L$. Similarly, a colocalization $\Gamma$ gives rise to a localization $L$ having the same properties as above. It is also well known that a localization functor $L$ is determined (up to a unique natural isomorphism) by its kernel $\Ker L$.

\subsection*{Spectrum and support}
The \emph{center} of $\Derived$ is the graded-commutative ring of natural transformations $\alpha:1_\Derived \to \Sigma^n$ satisfying $\alpha \Sigma = (-1)^n \Sigma \alpha$. Following~\cite{BIKstratifying} we say that a Noetherian graded-commutative ring $S$ \emph{acts on $\Derived$} if there is a homomorphism of graded rings from $S$ to the graded-commutative center of $\Derived$. For example, when $R$ is a commutative $\sphere$-algebra there is a canonical action of $\pi_*R$ on $\Derived$ given by sending $f:R \to \Sigma^n R$ to the natural transformation $f\otimes_R -$.

To set the grading, for every element $s \in S^n$ and for any object $X \in \Derived$ we have a morphism $s_X: X \to \Sigma^n X$. The \emph{degree} of $s$ is therefore $|s|=-n$ and the codegree of $s$ is $n$. In this way $\pi^*X$ is naturally a (left) graded $S$-module for every object $X\in \Derived$.

Let $\spec S$ be the partially ordered set of homogeneous prime ideals of $S$. A subset $\vv \subset \spec S$ is \emph{specialization closed} if whenever $\pp \subset \qq$ and $\pp \in \vv$ then also $\qq \in \vv$. Given a specialization closed subset $\vv$, an object $X\in \Derived$ is called \emph{$\vv$-torsion} if $(\pi^*X)_\pp = 0$ for all $\pp \not\in \vv$, where $(\pi^*X)_\pp$ is the usual (homogeneous) localization at $\pp$. Let $\Derived_\vv$ be the full subcategory of $\vv$-torsion objects, this is a localizing subcategory \cite[Lemma 4.3]{BIKsupport}. By \cite[Proposition 4.5]{BIKsupport} there is a localization $L_\vv$ and a colocalization $\Gamma_\vv$ such that
\[ \Gamma_\vv X \to X \to L_\vv X \]
is an exact triangle and $\Ker L_\vv = \Im \Gamma_\vv = \Derived_\vv$. Note that both $\Gamma_\vv$ and $L_\vv$ are \emph{smashing}, i.e. they commute with coprducts~\cite[Corollary 6.5]{BIKsupport}.

\begin{remark}
The original definition of $\vv$-torsion objects given in \cite{BIKstratifying} appears in a different form than the one given here. This is because the definition in~\cite{BIKstratifying} is designed for a triangulated category with a set of compact generators. In our setting, where there is a single compact generator, the definition above agrees with the original definition from~\cite{BIKstratifying}. We further remark on this at the end of this section.
\end{remark}

\begin{remark}
We should note that the traditional topological definition of a smashing localization is different. In topology a localization $L$ is \emph{smashing} if it is equivalent to the functor $A \otimes -$ for some object $A$ and some monoidal product $\otimes$. In the setting we consider the monoidal product is $\otimes_R$ whose unit is $R$. When $S$ acts on $\Derived$ via a ring homomorphism $S \to \pi^*R$, then $L_\vv$ is isomorphic to $L_\vv(R) \otimes_R -$ (see \cite[Section 7]{BIKstratifying}). Since this will be the case in all the situations considered in this paper, we see that $L_\vv$ and $\Gamma_\vv$ are smashing according to both terminologies.
\end{remark}

Given a prime ideal $\pp \in \spec S$ let $\zz(\pp)=\{ \qq \in \spec S \ | \ \qq \nsubseteq \pp \}$. For a homogeneous ideal $\aa$ of $S$ we let $\vv(\aa)=\{\qq \in \spec S \ | \ \aa \subseteq \qq \}$. Both $\zz(\pp)$ and $\vv(\aa)$ are specialization closed subsets of $\spec S$ and hence there are appropriate localizations and colocalizations. For $X$ in $\Derived$ denote by $X_\pp$ the localization $L_{\zz(\pp)}X$. By~\cite[Theorem 4.7]{BIKsupport} $\pi^*(X_\pp) = (\pi^* X)_\pp$, which justifies this notation. Let $\Gamma_\pp$ be the exact functor given by
\[ \Gamma_\pp X = \Gamma_{\vv(\pp)} X_\pp\]
It turns out that $\Im \Gamma_\pp$ is a localizing subcategory of $\Derived$ (see~\cite{BIKstratifying}). For $X \in \Derived$ the \emph{support} of $X$ over $S$ is defined to be
\[ \supp_S X = \{ \pp \in \spec S \ | \ \Gamma_\pp X \neq 0 \} \]

\subsection*{Stratification}
As above, we assume that $S$ acts on $\Derived$. Before defining stratification we need two preliminary concepts from~\cite{BIKstratifying}. First, a \emph{local-global principle holds} for the action of $S$ on $\Derived$ if for every $X\in \Derived$
\[ \loc_\Derived (X) = \loc_\Derived (\{ \Gamma_\pp X \ | \ \pp \in \spec S\} )\]
Note that by~\cite[Corollary 3.5]{BIKstratifying} if $S$ has a finite (homogeneous) Krull dimension then the local-global principle holds. Second, a localizing subcategory of $\Derived$ is \emph{minimal} if it is nonzero and contains no nonzero localizing subcategories.

\begin{definition}[\cite{BIKstratifying}]
\label{def: Stratification}
The triangulated category $\Derived$ is \emph{stratified} by $S$ if the following two conditions hold:
\begin{description}
\item[S1] The local-global principle holds for the action of $S$ on $\Derived$.
\item[S2] For every $\pp \in \spec S$ the localizing subcategory $\Im \Gamma_\pp$ is zero or minimal.
\end{description}
\end{definition}

For a subcategory $\bl$ of $\Derived$ let $\supp_S \bl$ be the union $\bigcup_{X\in \bl} \supp_S X$. One important result of~\cite{BIKstratifying} is the following theorem (translated to our setting):
\begin{theorem}[{\cite[Theorem 4.2]{BIKstratifying}}]
If $\Derived$ is stratified by the action of $S$ then the map of sets
\[ \left\{
     \begin{array}{c}
        \text{Localizing}\\
        \text{subcategories of }\Derived\\
     \end{array}
   \right\}
   \xrightarrow{\supp_S(-)}
   \Big\{\text{subsets of } \supp_S \Derived \Big\}
   \]
is a bijection which respects inclusions. The inverse map sends a subset $\uu \subset \supp_S \Derived$ to the localizing subcategory whose objects are $\{X\in \Derived \ | \ \supp_S X \subset \uu\}$.
\end{theorem}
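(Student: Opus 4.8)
The plan is to prove the two maps are mutually inverse and both inclusion-preserving; monotonicity will be immediate, so the work lies in the two round-trip identities. Throughout write $\bl_\uu$ for the full subcategory $\{X \in \Derived \mid \supp_S X \subseteq \uu\}$ attached to a subset $\uu \subseteq \supp_S\Derived$; the claimed inverse map is $\uu \mapsto \bl_\uu$.

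Step one is the elementary support calculus, which uses no stratification. Since $\Gamma_\pp = \Gamma_{\vv(\pp)}\circ L_{\zz(\pp)}$ is a composite of smashing functors it is exact and commutes with coproducts, so $\Ker\Gamma_\pp$ is localizing. From this one reads off at once: $\supp_S\big(\coprod_i X_i\big) = \bigcup_i \supp_S X_i$; $\supp_S Y \subseteq \supp_S X \cup \supp_S Z$ for any triangle $X \to Y \to Z$; invariance of $\supp_S$ under retracts; hence that each $\bl_\uu$ is a localizing subcategory; and $\supp_S\loc_\Derived(\class) = \bigcup_{X \in \class}\supp_S X$ for any class $\class$, so in particular $\supp_S\bl = \bigcup_{X \in \bl}\supp_S X$ for any localizing $\bl$. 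I would then record two inputs special to the tensor-triangulated category $\Derived(R)$. First, as in \cite[Section 7]{BIKstratifying} one has $L_\vv \cong L_\vv R \otimes_R -$, and applying $-\otimes_R Y$ to the triangle $\Gamma_\vv R \to R \to L_\vv R$ identifies the fibre, giving $\Gamma_\vv \cong \Gamma_\vv R \otimes_R -$; composing, $\Gamma_\pp \cong (\Gamma_{\vv(\pp)}R \otimes_R L_{\zz(\pp)}R)\otimes_R - = \Gamma_\pp R \otimes_R -$. Consequently, using $\loc_\Derived(R) = \Derived$ and exactness and coproduct-preservation of $-\otimes_R-$ in each variable, $\Gamma_\pp Y \cong \Gamma_\pp R \otimes_R Y \in \loc_\Derived(Y)$ for every $Y$, and $\Im\Gamma_\pp = \loc_\Derived(\Gamma_\pp R)$. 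Second, I would cite from \cite{BIKsupport} the support formula $\supp_S(\Gamma_\pp Y) = \{\pp\}$ whenever $\Gamma_\pp Y \neq 0$ (and $\emptyset$ otherwise), which rests on the idempotency of $\Gamma_\pp$.

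The identity $\supp_S\bl_\uu = \uu$ then follows without any use of stratification: the inclusion $\supp_S\bl_\uu \subseteq \uu$ holds by the very definition of $\bl_\uu$, while for $\pp \in \uu \subseteq \supp_S\Derived$ there is some $Y$ with $\Gamma_\pp Y \neq 0$, and then $\Gamma_\pp Y \in \bl_\uu$ because $\supp_S(\Gamma_\pp Y) = \{\pp\} \subseteq \uu$, so $\pp \in \supp_S\bl_\uu$. This shows $\bl \mapsto \supp_S\bl$ is surjective onto the subsets of $\supp_S\Derived$ and that $\uu \mapsto \bl_\uu$ is a section of it.

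The remaining identity $\bl_{\supp_S\bl} = \bl$ for a localizing subcategory $\bl$ is where stratification enters, and this is the main obstacle. The inclusion $\bl \subseteq \bl_{\supp_S\bl}$ is trivial. For the converse it suffices to prove the implication: if $\pp \in \supp_S\bl$ then $\Im\Gamma_\pp \subseteq \bl$. Indeed, picking $Y \in \bl$ with $\Gamma_\pp Y \neq 0$ (possible since $\supp_S\bl = \bigcup_{Z\in\bl}\supp_S Z$), the subcategory $\loc_\Derived(\Gamma_\pp Y)$ is nonzero and lies both in $\loc_\Derived(\Gamma_\pp R) = \Im\Gamma_\pp$ and, via $\Gamma_\pp Y \in \loc_\Derived(Y)$, in $\bl$; as $\Im\Gamma_\pp$ is minimal by \textbf{S2}, we get $\loc_\Derived(\Gamma_\pp Y) = \Im\Gamma_\pp$, hence $\Im\Gamma_\pp \subseteq \bl$. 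Now take any $X$ with $\supp_S X \subseteq \supp_S\bl$: for each $\pp$, either $\pp \notin \supp_S X$ and $\Gamma_\pp X = 0 \in \bl$, or $\pp \in \supp_S X \subseteq \supp_S\bl$ and $\Gamma_\pp X \in \Im\Gamma_\pp \subseteq \bl$; either way $\Gamma_\pp X \in \bl$. By the local--global principle \textbf{S1}, $X \in \loc_\Derived(X) = \loc_\Derived(\{\Gamma_\pp X \mid \pp \in \spec S\}) \subseteq \bl$. Finally, both maps are evidently inclusion-preserving, so the bijection respects inclusions. The genuinely delicate ingredients I anticipate are precisely the two facts drawn from \cite{BIKsupport,BIKstratifying} — the tensor identity $\Gamma_\pp \cong \Gamma_\pp R \otimes_R -$ with its corollaries $\Gamma_\pp Y \in \loc_\Derived(Y)$ and $\Im\Gamma_\pp = \loc_\Derived(\Gamma_\pp R)$, and the support formula $\supp_S(\Gamma_\pp Y) = \{\pp\}$ — and getting the order of the two reductions right: first replacing membership of $X$ in $\bl$ by membership of the pieces $\Gamma_\pp X$ via \textbf{S1}, then collapsing each relevant $\Im\Gamma_\pp$ into $\bl$ via the minimality \textbf{S2}.
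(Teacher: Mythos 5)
The paper itself does not prove this statement: it is recalled verbatim as \cite[Theorem 4.2]{BIKstratifying} and used as an imported black box, so there is no internal proof to compare against. That said, your argument is a correct and complete reconstruction of the BIK proof, and the two round-trip identities are handled in exactly the right order: $\supp_S \bl_\uu = \uu$ needs only the support calculus and the formula $\supp_S(\Gamma_\pp Y)=\{\pp\}$, while $\bl_{\supp_S \bl}=\bl$ needs \textbf{S1} to reduce membership of $X$ to membership of the pieces $\Gamma_\pp X$ and then \textbf{S2} to collapse each relevant $\Im\Gamma_\pp$ into $\bl$.

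One small remark on economy and generality. You invoke the tensor identity $\Gamma_\pp\cong \Gamma_\pp R\otimes_R -$ in order to deduce $\Gamma_\pp Y\in\loc_\Derived(Y)$ and $\Im\Gamma_\pp=\loc_\Derived(\Gamma_\pp R)$. This is fine in the present setting, where $\Derived$ is tensor-triangulated and the action is canonical, but neither fact requires it: $\Gamma_\pp Y\in\loc_\Derived(Y)$ follows immediately from \textbf{S1} itself (since $\loc_\Derived(Y)=\loc_\Derived(\{\Gamma_\qq Y\})$ contains each $\Gamma_\qq Y$), and the fact that $\Im\Gamma_\pp$ is a localizing subcategory is exactly what BIK establish in the general (non-monoidal) compactly generated setting and is recorded as such in the section you are reading. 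Replacing your tensor argument by these two observations would make the proof valid at the level of generality of the cited theorem rather than only for $\Derived(R)$ with $R$ a commutative $\sphere$-algebra, which is closer in spirit to the original.
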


We will be more interested in the classification of thick subcategories of $\Derived^\compact$. Such classification is given by the following result from~\cite{BIKstratifying} (again translated to our setting):
\begin{theorem}[{\cite[Theorem 6.1]{BIKstratifying}}]
Suppose that $\Derived$ is stratified by the action of $S$ and that $\pi^*R$ is Noetherian. Then the map
\[ \left\{
     \begin{array}{c}
        \text{Thick}\\
        \text{subcategories of }\Derived^\compact\\
     \end{array}
   \right\}
   \xrightarrow{\supp_S(-)}
   \left\{
     \begin{array}{c}
        \text{Specialization closed}\\
        \text{subsets  of }\supp_S \Derived\\
     \end{array}
   \right\}
   \]
is a bijection which respects inclusions. The inverse map sends a specialization closed subset $\vv \subset \supp_S \Derived$ to the full subcategory whose objects are $\{X\in \Derived^\compact \ | \ \supp_S X \subset \vv\}$.
\end{theorem}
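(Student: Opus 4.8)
The statement is deduced from the classification of localizing subcategories established above (under the same stratification hypothesis) together with Neeman's identity $\thick_\Derived(\class) = \loc_\Derived(\class) \cap \Derived^\compact$ for a set $\class$ of compact objects. The plan is to exhibit the inverse map explicitly and check that the two assignments are mutually inverse. In one direction, a thick subcategory $\bl \subseteq \Derived^\compact$ is sent to $\supp_S \bl = \bigcup_{X \in \bl} \supp_S X$. For this to land among specialization closed subsets one uses the Noetherian hypothesis: since $\pi^*R$ is Noetherian, $\pi^*X$ is a finitely generated $\pi^*R$-module for every compact $X$ (the class of such $X$ is closed under suspension, retracts and cones, and contains $R$), and then $\supp_S X$ is a closed subset of $\spec S$ by the description of supports of compact objects in~\cite{BIKsupport}; hence the union $\supp_S \bl$ is specialization closed. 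In the other direction a specialization closed subset $\vv \subseteq \supp_S \Derived$ is sent to $\bl_\vv = \{X \in \Derived^\compact \mid \supp_S X \subseteq \vv\}$, which is manifestly a thick subcategory. Both maps plainly respect inclusions.

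First I would verify that $\bl_{\supp_S \bl} = \bl$ for every thick $\bl \subseteq \Derived^\compact$. The inclusion $\bl \subseteq \bl_{\supp_S \bl}$ is trivial. For the reverse, take a compact $X$ with $\supp_S X \subseteq \supp_S \bl$. I claim $\supp_S \loc_\Derived(\bl) = \supp_S \bl$: indeed $\{Y \in \Derived \mid \supp_S Y \subseteq \supp_S \bl\}$ is a localizing subcategory containing $\bl$, hence contains $\loc_\Derived(\bl)$, which gives the inclusion $\subseteq$, while $\supseteq$ is obvious. By the classification of localizing subcategories, $\loc_\Derived(\bl)$ is precisely the localizing subcategory of objects whose support lies in $\supp_S \bl$; therefore $X \in \loc_\Derived(\bl)$, and since $X$ is compact, $X \in \loc_\Derived(\bl) \cap \Derived^\compact = \thick_\Derived(\bl) = \bl$.

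Then I would verify that $\supp_S \bl_\vv = \vv$ for every specialization closed $\vv \subseteq \supp_S \Derived$. The inclusion $\subseteq$ is immediate from the definition of $\bl_\vv$. For $\supseteq$, fix $\pp \in \vv$; since $S$ is Noetherian, $\pp$ is generated by finitely many homogeneous elements $s_1, \dots, s_n$, and forming the Koszul object $K$ obtained from $R$ by successively taking the cone on the action of $s_1, \dots, s_n$ produces a compact object with $\supp_S K = \supp_S R \cap \vv(\pp)$, by the standard computation of the support of a Koszul object in~\cite{BIKsupport}. As $\vv$ is specialization closed and $\pp \in \vv$ we have $\vv(\pp) \subseteq \vv$, so $K \in \bl_\vv$; and since $R$ generates $\Derived$ we have $\supp_S \Derived = \supp_S R$, so $\pp \in \supp_S R$, whence $\pp \in \supp_S R \cap \vv(\pp) = \supp_S K \subseteq \supp_S \bl_\vv$. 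Combining the two displayed identities shows that $\bl \mapsto \supp_S \bl$ and $\vv \mapsto \bl_\vv$ are mutually inverse; both respect inclusions, which finishes the argument.

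The only step carrying genuine content is the realization of an arbitrary specialization closed subset as the support of a thick subcategory, and this is exactly where the Noetherian hypothesis is indispensable: it guarantees on the one hand that $\pp$ is finitely generated, so that the Koszul object is compact, and on the other that $\supp_S X$ is closed for every compact $X$, so that the support map is well defined. Everything else is formal once the classification of localizing subcategories and Neeman's identity are in hand.
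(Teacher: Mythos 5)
The paper itself gives no proof of this statement---it is quoted verbatim as \cite[Theorem 6.1]{BIKstratifying}, so there is no internal argument to compare your proof against. Your proposal reproduces the standard argument from \cite{BIKstratifying}: deduce the thick--subcategory classification from the classification of localizing subcategories (the companion Theorem~4.2), Neeman's identity $\thick_\Derived(\class) = \loc_\Derived(\class) \cap \Derived^\compact$, and the computation $\supp_S(R/\!/\pp)=\supp_S R\cap\vv(\pp)$ for Koszul objects, and the argument is correct.

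One technical caveat worth naming: for $\supp_S X$ to be a \emph{closed} subset of $\spec S$ when $X$ is compact, you need $\pi^*X$ to be finitely generated as an $S$-module, not merely as a $\pi^*R$-module, and this is what drives the claim that $\supp_S\bl$ is specialization closed. So the hypothesis your proof actually uses is that $\pi^*R$ is a finitely generated $S$-module (which is exactly what Benson--Iyengar--Krause call ``Noetherian''; see the remark at the end of Section~\ref{sec: Stratification of a derived category}). The paper's rendering of this hypothesis as ``$\pi^*R$ is Noetherian'' is slightly imprecise, and your write-up inherits the imprecision in the sentence ``$\pi^*X$ is a finitely generated $\pi^*R$-module \dots\ and then $\supp_S X$ is a closed subset of $\spec S$''. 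In every application made in the paper $S$ maps onto $\pi^*R$, so the two formulations agree there; but if you want to prove the theorem exactly as stated you should either read ``Noetherian'' in BIK's sense or note explicitly that $S\to\pi^*R$ is surjective.
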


\subsection*{Koszul objects}
Given an object $M$ of $\Derived$ and an element $s \in S$ let $M/s$ be the object defined by the following exact triangle
\[ \Sigma^{-|s|} M \xrightarrow{s_M} M \to M/s \]
Obviously, the object $M/s$ is defined only up to isomorphism. For a sequence $s=(s_1,...,s_n) \subset S$ let $M/s$ be the object $M/s_1/s_2\cdots/s_n$.

In the situations we will consider, $\Derived$ has a symmetric monoidal tensor product $\otimes_R$ whose unit is $R$ and $S$ acts on $\Derived$ via a ring homomorphism $S \to \pi^*R$. In such situations it is easy to show that $M/s$ is unique up to isomorphism and that, up to isomorphism, it is independent of the ordering of $s_1,...,s_n$.

\subsection*{Homotopy colimits}
Given a telescope
\[ X_1 \xrightarrow{\varphi_1} X_2 \xrightarrow{\varphi_2} \cdots \xrightarrow{\varphi_{n-1}} X_n \xrightarrow{\varphi_n} \cdots\]
of objects in $\Derived$ its \emph{homotopy colimit} is defined by the exact triangle
\[ \bigoplus_n X_n \xrightarrow{1-\varphi_n} \bigoplus_n X_n \to \hocolim_n X_n\]
Thus for every $i$ we get a morphism $\psi_i:X_i \to \hocolim_n X_n$. Moreover, it is easy to see that $\psi_i = \varphi_i \psi_{i+1}$. The following lemma is very well known.
\begin{lemma}
Let $M$ be a compact object of $\Derived$ and let $X_1 \xrightarrow{\varphi_1} X_2 \xrightarrow{\varphi_2} \cdots$ be a telescope in $\Derived$. Then any morphism $M \to \hocolim_n X_n$ splits through some $X_i$ for $i>>0$.
\end{lemma}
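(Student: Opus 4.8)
The statement to prove is the standard fact that compact objects see only finite stages of a homotopy colimit. Here is how I would organize it.

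\medskip

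The plan is to use the defining exact triangle of the homotopy colimit together with the compactness of $M$. Write $Y=\bigoplus_n X_n$ and let $\Phi=1-\varphi\colon Y\to Y$ be the shift-minus-identity map, so that we have an exact triangle $Y\xrightarrow{\Phi}Y\to\hocolim_n X_n\to\Sigma Y$. Applying $\hom_\Derived(M,-)$ to this triangle gives a long exact sequence, and since $M$ is compact the functor $\hom_\Derived(M,-)$ commutes with the coproduct defining $Y$; thus $\hom_\Derived(M,Y)=\bigoplus_n\hom_\Derived(M,X_n)$. A given morphism $f\colon M\to\hocolim_n X_n$ lifts, by exactness, to a morphism $g\colon M\to Y$ whose image under $\Phi_*$ vanishes, i.e. $g$ lies in the kernel of $1-\varphi_*$ acting on $\bigoplus_n\hom_\Derived(M,X_n)$.

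\medskip

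Next I would run the elementary algebraic argument identifying that kernel. Because $M$ is compact, $g=(g_1,g_2,\dots)$ has only finitely many nonzero components, so there is some $N$ with $g_i=0$ for $i>N$; take $i$ minimal with $g_i\neq0$ if $g\neq 0$ (if $g=0$ then $f=0$ factors trivially through any $X_j$ and we are done). The equation $(1-\varphi_*)(g)=0$ reads componentwise $g_n=\varphi_{n-1}\circ g_{n-1}$ for all $n$ (with the convention $g_0=0$), and this forces all components to be zero below the first nonzero one, hence $g$ is in fact concentrated in the single slot $N$ where $g=(0,\dots,0,g_N,0,\dots)$ — wait, this is not quite right in general, so more carefully: the relations $g_{n}=\varphi_{n-1}g_{n-1}$ show that once $g_m=0$ then $g_{m+1}=0$, so the support of $g$ is an initial segment; combined with finiteness of the support, either $g=0$ or the support is an initial segment $\{1,\dots,N\}$ with $g_{N+1}=\varphi_N g_N=0$. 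In either case one checks directly that the composite $\psi_N\circ g_N\colon M\to X_N\to\hocolim_n X_n$ agrees with $f$: indeed the canonical map $Y\to\hocolim$ restricted to the $n$-th summand is $\psi_n$, and the telescope relation $\psi_n=\varphi_n\psi_{n+1}$ together with $g_n=\varphi_{n-1}g_{n-1}$ shows $\sum_n\psi_n g_n$ collapses to $\psi_N g_N$ modulo the image of $\Phi$; since $f$ is the image of $g$ under $\hom_\Derived(M,Y)\to\hom_\Derived(M,\hocolim)$, we get $f=\psi_N\circ g_N$, so $f$ factors through $X_N$.

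\medskip

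The main (and only real) subtlety is bookkeeping: making precise that the lift $g$ of $f$ can be chosen with finite support and that the telescope identity $\psi_i=\varphi_i\psi_{i+1}$ lets one replace $\sum_n\psi_n g_n$ by a single term. There is a standard slicker packaging — observe that $\hom_\Derived(M,Y)\xrightarrow{1-\varphi_*}\hom_\Derived(M,Y)$ is injective (a shift map minus identity on a countable direct sum of abelian groups has zero kernel, by the initial-segment argument above) and has cokernel $\operatorname{colim}_n\hom_\Derived(M,X_n)$; then the long exact sequence collapses to give $\hom_\Derived(M,\hocolim_n X_n)\cong\operatorname{colim}_n\hom_\Derived(M,X_n)$, and factoring through some $X_i$ is just the statement that every element of a filtered colimit of this shape comes from a finite stage. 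I expect no genuine obstacle; the lemma is elementary once compactness is invoked, and the proof is essentially two paragraphs of diagram-chasing in the derived category.
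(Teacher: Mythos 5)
The paper states this lemma without proof, calling it ``very well known,'' so there is no in-text argument to compare against; the question is simply whether your proof is correct.

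Your final paragraph gives the correct (and standard) argument: compactness of $M$ gives $\hom_\Derived(M,\bigoplus_n X_n)\cong\bigoplus_n\hom_\Derived(M,X_n)$, the map $1-\varphi_*$ is injective on this direct sum with cokernel $\operatorname{colim}_n\hom_\Derived(M,X_n)$, and the long exact sequence from the defining triangle then collapses to an isomorphism $\hom_\Derived(M,\hocolim_n X_n)\cong\operatorname{colim}_n\hom_\Derived(M,X_n)$, whence any element comes from a finite stage. This is exactly what is needed, and on its own it is a complete proof.

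However, the first two paragraphs contain a genuine logical error that you should fix or simply delete. Writing $Y=\bigoplus_n X_n$ and $\pi\colon Y\to\hocolim_n X_n$ for the canonical map, a lift $g$ of $f$ satisfies $\pi\circ g=f$; it is \emph{not} in the kernel of $\Phi_*=(1-\varphi)_*$. Indeed, as you yourself note, $\Phi_*$ is injective on $\bigoplus_n\hom_\Derived(M,X_n)$, since the relations $g_1=0$ and $g_{n}=\varphi_{n-1}g_{n-1}$ force every component to vanish. So demanding $\Phi_*(g)=0$ forces $g=0$ and hence $f=0$; the case analysis in the middle paragraph (``the support of $g$ is an initial segment \ldots'') is therefore vacuous for any nonzero $f$. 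The correct way to run the ``elementary'' version of the argument is the following: $g$ has finite support, say $g_n=0$ for $n>N$, and the telescope relation $\psi_n=\psi_{n+1}\circ\varphi_n$ (so $\psi_n=\psi_N\circ\varphi_{N-1}\circ\cdots\circ\varphi_n$ for $n<N$) lets one collapse the finite sum:
\[
f=\pi_*(g)=\sum_{n\le N}\psi_n\circ g_n=\psi_N\circ\Bigl(g_N+\sum_{n<N}(\varphi_{N-1}\circ\cdots\circ\varphi_n)\circ g_n\Bigr),
\]
exhibiting the desired factorization $f=\psi_N\circ h$ through $X_N$. In short: drop the spurious constraint $\Phi_*(g)=0$, assemble the components of $g$ into a single map to $X_N$ using the telescope relation, or simply keep only your last paragraph.
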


\subsection*{Tensor and cotensor}
Since $R$ is a commutative $\sphere$-algebra its derived category is equipped with a symmetric monoidal tensor product $\otimes_R$ whose unit is $R$. This implies that every localizing subcategory $\al$ of $\Derived$ is a \emph{tensor ideal} localizing subcategory, i.e. it satisfies
\[ X,Y \in \al \ \Rightarrow \ X \otimes_R Y \in \al\]
This allows us to use results pertaining to stratification of tensor ideal localizing subcategories, such as the results in~\cite[Section 7]{BIKstratifying}.

Recall that $\pi^*R$ has a canonical action on $\Derived$, defined using the tensor product. If a ring $S$ acts on $\Derived$ via a ring homomorphism $S \to \pi^*R$, the action of $S$ is called \emph{canonical}. All the actions of a ring on $\Derived$ we will encounter in this paper will be canonical. This in particular implies that the local-global principal (condition \textbf{S1}) for the action of $S$ on $\Derived$ holds~\cite[Theorem 7.2]{BIKstratifying}.

The derived category $\Derived$ also has an internal function object functor $\Hom_R(-,-)$, sometimes called a cotensor. For every $X$ and $Y$ in $\Derived$ the internal function object yields an adjoint pair
\[\hom_\Derived(X\otimes_R Y,Z) \cong \hom_\Derived (X, \Hom_R(Y,Z))\]
and this adjunction is natural in $X$ and $Z$ (see for example~\cite{BIKsupport}). In particular, setting $X=R$ we see that
\[\pi^*\Hom_R(Y,Z) \cong \hom^*_\Derived(Y,Z)\]
and setting also $Y=R$ yields $\Hom_R (R,Z) \cong Z$.

When $R$ is a commutative $\sphere$-algebra then the internal function object is simply the derived version of the function $R$-module spectrum.

\subsection*{On having a single compact generator}
The machinery of~\cite{BIKsupport} and~\cite{BIKstratifying} is designed for stratification of compactly generated categories. Since $\Derived$ has a single compact generator (which is also the unit of the monoidal structure), it allows us to simplify some definitions.

For example, let $\vv \subset \spec S$ be a specialization closed set. In~\cite{BIKstratifying} an object $X\in \Derived$ is defined to be $\vv$-torsion if $\hom^*_\Derived(C,X)_\pp=0$ for all $C\in \Derived^\compact$ and for all $\pp \in \spec S \setminus \vv$ ($\hom^*_\Derived(C,X)$ is an $S$-module in an obvious way). It is easy to see this is equivalent to requiring that $(\pi^*X)_\pp=0$ for all $\pp \in \spec S \setminus \vv$ -- which is the definition given in this paper.

Here is another concept which is simplified in our setting. In~\cite{BIKstratifying} the category $\Derived$ together with the action of $S$ is called \emph{Noetherian} if for every $C\in \Derived^\compact$ the $S$-module $\hom^*_\Derived(C,C)$ is finitely generated. In our setting, this is equivalent to requiring that $\pi^*R$ be finitely generated $S$-module. We leave this claim as an exercise to the reader.

\section{Algebras arising from localizations}
\label{sec: Algebras arising from localizations}

Throughout this section we are assuming that $R$ is a commutative $\sphere$-algebra whose derived category is $\Derived$. In addition we suppose a graded commutative Noetherian ring $S$ acts on $\Derived$ via a ring homomorphism $S \to \pi^*R$. We fix a specialization closed subset $\vv$ in $\spec S$. The object of this section is to prove the following proposition.

\begin{proposition}
\label{pro: Bousfiled localization and spec localization}
The $R$-module $L_\vv R$ is a commutative $\sphere$-algebra and the adjoint functors
\[ \Derived \xrightarrow{L_\vv R\otimes_R -} \Derived(L_\vv R) \quad \text{and} \quad \Derived(L_\vv R) \xrightarrow{G} \Derived \]
where $G$ is the forgetful functor, yield an equivalence between $\Derived(L_\vv R)$ and $\Im L_\vv$. In particular the coaugmented functor $L_\vv$ is naturally isomorphic to the coaugmented functor $G(L_\vv R\otimes_R -)$.
\end{proposition}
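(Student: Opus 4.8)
The first task is to promote $L_\vv R$ from an object of $\Derived$ to a commutative $\sphere$-algebra. Since $L_\vv$ is a smashing localization of $\Derived = \Derived(R)$ that is isomorphic to $L_\vv(R)\otimes_R-$ (as noted in the second remark of Section~\ref{sec: Stratification of a derived category}, because the action of $S$ is canonical), the coaugmentation $R \to L_\vv R$ presents $L_\vv R$ as a Bousfield localization of the commutative $\sphere$-algebra $R$. By the EKMM machinery~\cite{EKMM}, a Bousfield localization of a commutative $\sphere$-algebra at a smashing localization functor carries a canonical structure of commutative $\sphere$-algebra, with $R\to L_\vv R$ a map of commutative $\sphere$-algebras; I would cite this and verify only that the localization $L_\vv$ fits the required format (it is smashing, and it is a localization in the sense that $L_\vv\eta$ is an isomorphism). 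This gives $\Derived(L_\vv R)$ and the forgetful functor $G\colon \Derived(L_\vv R)\to \Derived$, with left adjoint the extension of scalars $F = L_\vv R \otimes_R -$.

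Next I would identify the essential image of $G$. Because $L_\vv R$ is smashing, extension of scalars along $R \to L_\vv R$ has the key feature that the unit map $R \to G F R = L_\vv R$ already lands in $\Im L_\vv$, and more generally the counit $FG \to 1_{\Derived(L_\vv R)}$ is an isomorphism: an $L_\vv R$-module $M$, viewed in $\Derived$, satisfies $L_\vv R \otimes_R M \cong L_\vv R \otimes_{L_\vv R} M \cong M$ since the multiplication $L_\vv R \otimes_R L_\vv R \to L_\vv R$ is an equivalence for a smashing localization. Hence $G$ is fully faithful, so $\Derived(L_\vv R)$ is equivalent to a full subcategory of $\Derived$, namely $\Im G$. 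It remains to see $\Im G = \Im L_\vv$. One inclusion is immediate: for any $L_\vv R$-module $M$, $GM \cong L_\vv R \otimes_R M \cong L_\vv(R) \otimes_R M \cong L_\vv M$ lies in $\Im L_\vv$. For the reverse inclusion, if $Y \in \Im L_\vv$ then $Y \cong L_\vv Y \cong L_\vv R \otimes_R Y = G(FY)$, so $Y$ is in the image of $G$. Combining, $G$ restricts to an equivalence $\Derived(L_\vv R) \xrightarrow{\sim} \Im L_\vv$, with inverse $F$ restricted to $\Im L_\vv$.

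Finally, to get the statement about coaugmented functors: the coaugmentation $\eta\colon 1_\Derived \to L_\vv$ corresponds under the adjunction $(F,G)$ to the unit $1_\Derived \to GF = G(L_\vv R \otimes_R -)$, and these two natural transformations agree because $GFX = L_\vv R\otimes_R X \cong L_\vv(R)\otimes_R X \cong L_\vv X$ naturally, compatibly with the coaugmentations (both are induced by $R \to L_\vv R$ smashed with $X$). This identifies $L_\vv$ with $G(L_\vv R \otimes_R -)$ as coaugmented functors.

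**Main obstacle.** The genuinely substantive point is the first step: establishing that a smashing Bousfield localization of a commutative $\sphere$-algebra is again a commutative $\sphere$-algebra in a way compatible with $R$. This is where the commutative (E$_\infty$) structure — as opposed to merely an $A_\infty$ or associative structure — is essential, and it rests on the rigidity of the EKMM smash product; everything after that is formal manipulation with adjunctions and the idempotence of smashing localizations. I would handle it by appeal to the literature on Bousfield localization of structured ring spectra rather than reproving it, while being careful to check the hypotheses (smashing, and that $L_\vv$ is of the form $L_\vv R \otimes_R -$ with $L_\vv R$ built from $R$ inside $\Derived(R)$).
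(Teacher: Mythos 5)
Your proposal is correct and follows essentially the same route as the paper: reconcile the Benson--Iyengar--Krause functor $L_\vv$ with Bousfield localization at $E=L_\vv R$ (using $L_\vv \cong L_\vv R\otimes_R -$), invoke the EKMM result (their Theorem VIII.2.2) to get the commutative $\sphere$-algebra structure on $R\to L_\vv R$, and then obtain the equivalence $\Derived(L_\vv R)\simeq \Im L_\vv$, which the paper simply quotes as well known (compare EKMM, Proposition VIII.3.2) while you spell out the counit/idempotence argument. The only cosmetic differences are that the EKMM localization theorem requires no smashing hypothesis (smashingness enters only in identifying the local category with $\Derived(L_\vv R)$), and the paper additionally records the q-cofibrancy caveat needed to apply that theorem to $R$ itself.
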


This result is hardly surprising. To prove this proposition one needs only to reconcile between Bousfield localizations of $\sphere$-algebras and the language of~\cite{BIKstratifying} which we follow here.

\subsection*{Bousfield localization}
We shall follow the definitions from~\cite[Chapter VIII]{EKMM}, translating them to our context.

Let $E$ be an $R$-module. A morphism $f:X \to Y$ in $\Derived$ is an \emph{$E_*$-equivalence} if $E\otimes_R f$ is an isomorphism in $\Derived$. An object $W\in\Derived$ is \emph{$E_*$-acyclic} if $E\otimes_R W \cong 0$. An object $L$ is called \emph{$E_*$-local} if $\hom_\Derived(f,L)$ is an isomorphism for every $E_*$-equivalence $f$, equivalently if $\hom_\Derived(W,L)=0$ for every $E_*$-acyclic object $W$. Finally, an \emph{$E_*$-localization} of an object $M$ is an $E_*$-equivalence $M \to M_E$ such that $M_E$ is $E_*$-local.

\begin{remark}
By~\cite[Theorem VIII.1.6]{EKMM} for every object $M\in \Derived$ there exists an $E_*$-localization. Since such a localization is unique up to a unique isomorphism, one can make it functorial. Thus, there exists a functor $L_E:\Derived \to \Derived$ and a natural transformation $\varepsilon:1_\Derived \to L_E$ such that $\varepsilon_M:M \to L_E M$ is an $E_*$-localization. It is easy to see that $L_E$ is a localization functor.
\end{remark}

What is more important to us is that the $E_*$-localization of $R$ is again a commutative $\sphere$-algebra, a result shown in~\cite{EKMM}.
\begin{theorem}[{\cite[Theorem VIII.2.2]{EKMM}}]
The localization map $R \to R_E$ can be constructed as a map of commutative $\sphere$-algebras.
\end{theorem}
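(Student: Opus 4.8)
The plan is to realize the localization map $\varepsilon_R\colon R\to L_ER$ — which exists in $\Derived(R)$ by \cite[Theorem~VIII.1.6]{EKMM} — by a transfinite small object argument carried out \emph{inside} the category $\mathcal{C}_R$ of commutative $R$-algebras, and then to identify its output with $\varepsilon_R$ using the uniqueness of localizations. Write $\mathbb{P}$ for the free commutative $R$-algebra functor, left adjoint to the forgetful functor $U\colon\mathcal{C}_R\to\Derived(R)$; on a cofibrant $R$-module $C$ one has, via the linear isometries operad of \cite{EKMM}, a natural equivalence $\mathbb{P}(C)\simeq\bigvee_{n\ge 0}(C^{\otimes_R n})_{h\mathfrak{S}_n}$, with $\mathfrak{S}_n$ the symmetric group on $n$ letters. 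Since $(\mathbb{P},U)$ is a Quillen pair, $\mathbb{P}$ of a cofibration of $R$-modules is a cofibration in $\mathcal{C}_R$, pushouts along such maps are homotopy pushouts, and all objects produced below will be cofibrant.

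First I would establish the one technical fact everything rests on: \emph{the functor $\mathbb{P}$ sends $E_*$-equivalences of cofibrant $R$-modules to $E_*$-equivalences; cobase changes in $\mathcal{C}_R$ of $E_*$-equivalences along arbitrary maps are $E_*$-equivalences; and transfinite composites of $E_*$-equivalences are $E_*$-equivalences.} The starting observation is that $E_*$-equivalences of $R$-modules are stable under $-\otimes_R Z$ for an arbitrary $R$-module $Z$: if $f$ has $E_*$-acyclic cofibre $F$, then $f\otimes_R Z$ has cofibre $F\otimes_R Z$ and $E\otimes_R(F\otimes_R Z)\cong(E\otimes_R F)\otimes_R Z\simeq 0$. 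Applying this factor by factor, an $E_*$-equivalence $C\to D$ of cofibrant modules induces an $\mathfrak{S}_n$-equivariant $E_*$-equivalence $C^{\otimes_R n}\to D^{\otimes_R n}$; since $E\otimes_R(-)$ commutes with the homotopy colimit computing $(-)_{h\mathfrak{S}_n}$ and homotopy orbits preserve underlying equivalences, $(C^{\otimes_R n})_{h\mathfrak{S}_n}\to(D^{\otimes_R n})_{h\mathfrak{S}_n}$ is again an $E_*$-equivalence, and the coproduct over $n$ gives the statement for $\mathbb{P}$. For the second assertion, the pushout of $B\leftarrow A\to A'$ in $\mathcal{C}_R$ is the relative smash product $B\otimes_A A'$, and rearranging smash products identifies the cofibre of $E\otimes_R B\to E\otimes_R(B\otimes_A A')$ with $(E\otimes_R F)\otimes_A B$, where $F=\mathrm{cofib}(A\to A')$, which is $\simeq 0$. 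The third is immediate since $E\otimes_R(-)$ commutes with filtered homotopy colimits.

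With this in hand I would run the following construction, modelled on the proof of \cite[Theorem~VIII.1.6]{EKMM}. By the cardinality argument of Bousfield used there, fix a regular cardinal $\kappa$ such that a cofibrant $R$-module is $E_*$-local if and only if it receives no nonzero map in $\Derived(R)$ from an $E_*$-acyclic cofibrant $R$-module of cellular size $<\kappa$. Put $A_0=R$, the initial — hence cofibrant — object of $\mathcal{C}_R$. Given $A_\alpha$, choose a set of representatives $\{P_i\to UA_\alpha\}_i$ for pairs consisting of an $E_*$-acyclic cofibrant $R$-module of size $<\kappa$ together with a map to $UA_\alpha$, and set
\[
A_{\alpha+1}\;=\;\mathbb{P}\Bigl(\bigvee_i CP_i\Bigr)\otimes_{\mathbb{P}(\bigvee_i P_i)}A_\alpha ,
\]
the pushout of $A_\alpha$ (along the adjoint of $\bigvee_i(P_i\to UA_\alpha)$) and of $\mathbb{P}$ applied to the inclusion $\bigvee_i P_i\hookrightarrow\bigvee_i CP_i$ into the cones; take homotopy colimits at limit ordinals and stop at $A_\lambda$ for a $\lambda$ of cofinality $>\kappa$. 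The inclusion $\bigvee_i P_i\hookrightarrow\bigvee_i CP_i$ is a map between $E_*$-acyclic objects, hence an $E_*$-equivalence, so by the technical fact each $A_\alpha\to A_{\alpha+1}$ is a cobase change of $\mathbb{P}$ of an $E_*$-equivalence and hence itself an $E_*$-equivalence; therefore $R\to A_\lambda$ is an $E_*$-equivalence. On the other hand, any map from a size-$<\kappa$ $E_*$-acyclic cofibrant $R$-module into $UA_\lambda=\hocolim_{\alpha<\lambda}UA_\alpha$ factors through some $UA_\alpha$ by the cofinality of $\lambda$, where it was rendered null at stage $\alpha+1$ by construction; so $UA_\lambda$ is $E_*$-local.

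Finally, $R\to A_\lambda$ is an $E_*$-equivalence with $E_*$-local target, so by the uniqueness part of \cite[Theorem~VIII.1.6]{EKMM} it is canonically isomorphic in $\Derived(R)$ to $\varepsilon_R\colon R\to L_ER$; in particular $UA_\lambda\simeq L_ER$. Setting $R_E=A_\lambda$ exhibits the localization map as a map of commutative $R$-algebras, hence of commutative $\sphere$-algebras, and functoriality comes from the functoriality of the construction. The hard part is the technical fact of the second paragraph: freely adjoining commutative-algebra cells introduces the symmetric powers $(C^{\otimes_R n})_{h\mathfrak{S}_n}$, and one must know these do not disturb $E_*$-acyclicity. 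This works precisely because we localize at $E_*$-equivalences — maps that become isomorphisms after smashing with the single fixed $R$-module $E$ — a class stable under $-\otimes_R Z$ for every $R$-module $Z$; for localization at an arbitrary class of maps no such argument is available, and the analogue can genuinely fail.
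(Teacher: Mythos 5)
This statement is imported from \cite[Theorem VIII.2.2]{EKMM}; the paper gives no proof of its own beyond the citation and a remark about passing to a q-cofibrant commutative replacement of $R$. Your sketch is correct in outline and reproduces essentially the cited EKMM argument itself --- the transfinite cell-attachment in commutative $R$-algebras, resting on the key fact that the free commutative algebra functor (via the identification of symmetric powers of cofibrant modules with homotopy orbits) together with cobase change and transfinite composition preserves $E_*$-equivalences --- and the cofibrancy bookkeeping you only gesture at is precisely the q-cofibrancy point the paper's remark flags.
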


\begin{remark}
There are two delicate points hidden here. The first is that we need $R$ to be, in the terminology of~\cite{EKMM}, a q-cofibrant commutative $\sphere$-algebra. Since any commutative $\sphere$-algebra is equivalent to a q-cofibrant one, we may assume that our original $R$ is such a commutative q-cofibrant $\sphere$-algebra. The second is that in the original statement of~\cite[Theorem VIII.2.2]{EKMM} it is only for a commutative q-cofibrant $R$-algebra $A$ that the localization $A \to A_E$ is a map of commutative $R$-algebras. But we may apply this theorem to the case where $A$ is a commutative q-cofibrant $R$-algebra replacement of $R$. Hence the resulting unit map $R \to A_E$ is both an $E$-localization and a map of commutative $R$-algebras.
\end{remark}

Thus we have an adjunction
\[ \Derived \xrightarrow{R_E\otimes_R -} \Derived(R_E) \quad \text{and} \quad \Derived(R_E) \xrightarrow{G} \Derived \]
and $\Derived(R_E)$ has a symmetric monoidal structure $\otimes_{R_E}$.

\subsection*{Localization at $\vv$ as a Bousfield localization}
Throughout the rest of this section $E$ is the object $L_\vv R$. By~\cite[Theorem 8.2]{BIKsupport} there is a natural isomorphism
\[ L_\vv(-) \cong L_\vv R \otimes_R -\]
and it is easy to see this is an isomorphism of coaugmented functors. Therefore an object $W \in \Derived$ is in $\Ker L_\vv$ if and only if $W$ is $E_*$-acyclic. Similarly a morphism $f$ is an $E_*$-equivalence if and only if $L_\vv f$ is an isomorphism.

\begin{lemma}
The full subcategory of $E_*$-local objects is equal to $\Im L_\vv$.
\end{lemma}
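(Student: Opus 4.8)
I want to show that the class of $E_*$-local objects coincides with $\Im L_\vv$, where $E = L_\vv R$. Both inclusions should follow from general localization formalism together with the identification $L_\vv(-) \cong E \otimes_R -$ noted just above, so the proof is really a matter of unwinding definitions.

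\emph{First} I would show $\Im L_\vv \subseteq \{E_*\text{-local objects}\}$. Take $Y = L_\vv Z$ for some $Z$. I must check $\hom_\Derived(W, Y) = 0$ for every $E_*$-acyclic $W$. But $W$ being $E_*$-acyclic means $E \otimes_R W \cong 0$, i.e. $L_\vv W \cong 0$, i.e. $W \in \Ker L_\vv$. Now $\Ker L_\vv = \Derived_\vv = \Im \Gamma_\vv$, so $W$ lies in the kernel of the localization $L_\vv$; by the standard property of a localization functor recalled in Section~\ref{sec: Stratification of a derived category} (namely $\hom_\Derived(X, LY) \cong \hom_\Derived(LX, LY)$, applied with $X = W$), we get $\hom_\Derived(W, L_\vv Z) \cong \hom_\Derived(L_\vv W, L_\vv Z) = \hom_\Derived(0, L_\vv Z) = 0$. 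Hence $Y$ is $E_*$-local.

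\emph{Second} I would show the reverse inclusion. Let $Y$ be $E_*$-local. Apply $L_\vv$ to the coaugmentation and consider the exact triangle $\Gamma_\vv Y \to Y \to L_\vv Y$. The object $\Gamma_\vv Y$ lies in $\Derived_\vv = \Ker L_\vv$, so it is $E_*$-acyclic; since $Y$ is $E_*$-local, $\hom_\Derived(\Gamma_\vv Y, Y) = 0$, and in particular the map $\mu_Y : \Gamma_\vv Y \to Y$ is zero. A standard triangle argument then shows $Y$ is a retract of $L_\vv Y$ — indeed, $\mu_Y = 0$ forces the triangle to split, so $Y \cong L_\vv Y \oplus \Sigma \Gamma_\vv Y$; one checks the summand $\Sigma\Gamma_\vv Y$ must vanish because it too is both $E_*$-acyclic (as a suspension of a $\vv$-torsion object) and, being a retract of the $E_*$-local $Y$, is $E_*$-local, hence zero. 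Therefore $Y \cong L_\vv Y \in \Im L_\vv$. (Alternatively, and perhaps more cleanly: the coaugmentation $\eta_Y : Y \to L_\vv Y$ is an $E_*$-equivalence by the remark above that $f$ is an $E_*$-equivalence iff $L_\vv f$ is an isomorphism, together with $L_\vv \eta_Y$ being an isomorphism; since $Y$ is $E_*$-local, $\hom_\Derived(\eta_Y, Y)$ is an isomorphism, and a formal Yoneda argument upgrades this to $\eta_Y$ being an isomorphism, so $Y \cong L_\vv Y$.)

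\emph{Main obstacle.} There is no deep obstacle here; the only care needed is bookkeeping: making sure that "$E_*$-acyclic" is literally the same as "$\in \Ker L_\vv$" (granted by $L_\vv(-) \cong E\otimes_R -$ as coaugmented functors) and invoking the correct universal property of the localization functor $L_\vv$ — in particular that $\eta_Y$ exhibits $L_\vv Y$ as the $E_*$-localization of $Y$, which is what lets an $E_*$-local $Y$ be identified with its own localization. I would present the second inclusion via the $\hom_\Derived(\eta_Y, Y)$-is-an-isomorphism route, as it avoids splitting-of-triangles fuss and matches the phrasing used earlier in the section.
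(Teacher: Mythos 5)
Your first inclusion, $\Im L_\vv \subseteq \{E_*\text{-local objects}\}$, is exactly the paper's argument. For the converse the paper is slicker: it notes that $\eta_M : M \to L_\vv M$ is an $E_*$-equivalence (applying $L_\vv$ makes it invertible) between $E_*$-local objects ($L_\vv M$ being $E_*$-local by the first inclusion), and invokes the standard fact that such a map is automatically an isomorphism. Your main route, splitting the triangle $\Gamma_\vv Y \to Y \to L_\vv Y$ once $\mu_Y = 0$, is a perfectly valid alternative, but you have the direction of the splitting reversed: if $\mu_Y = 0$ the cofiber is the sum of the other two terms, so $L_\vv Y \cong Y \oplus \Sigma\Gamma_\vv Y$, not $Y \cong L_\vv Y \oplus \Sigma\Gamma_\vv Y$. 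Consequently $\Sigma\Gamma_\vv Y$ is a retract of $L_\vv Y$, not of $Y$, and the reason it is $E_*$-local is that $L_\vv Y \in \Im L_\vv$ is. (In fact, once you know $Y$ is a retract of $L_\vv Y$ you can finish even faster: $\Im L_\vv = \Ker\Gamma_\vv$ is thick, hence retract-closed, so $Y \in \Im L_\vv$ immediately, with no need to analyse the summand.) Your parenthetical Yoneda alternative is also slightly underspecified: knowing only that $\hom(\eta_Y, Y)$ is an isomorphism yields a left inverse of $\eta_Y$, not invertibility; you additionally need $\hom(\eta_Y, L_\vv Y)$ to be an isomorphism, which again uses that $L_\vv Y$ is $E_*$-local -- at which point you have recovered the paper's argument.
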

\begin{proof}
Suppose $M \in \Im L_\vv$ then $\hom_\Derived(X,M) \cong \hom_\Derived(L_\vv X,M)$ for every $X \in \Derived$. Therefore for any $E_*$-acyclic $W$, $\hom_\Derived(W,M)=0$ and $M$ is $E_*$-local. So now suppose $M$ is $E_*$-local. The morphism $M \to L_\vv M$ is an $E_*$-equivalence between $E_*$-local objects and is therefore an isomorphism.
\end{proof}

From this lemma it is obvious that $E_*$-localization is isomorphic to $L_\vv$. In particular the morphism $\varepsilon:R \to R_E$ is isomorphic to $R \to L_\vv R$. It follows that there is a natural isomorphism of coaugmented functors
\[ L_\vv(-) \cong R_E \otimes_R -\]
Note also that $E_*$-localization is given by $\varepsilon\otimes_R 1_M: M \to R_E \otimes_R M$, i.e. $E_*$-localization is smashing in the topological terminology.

Since $\varepsilon:R \to R_E$ can be constructed as a map of commutative $\sphere$-algebras, there are adjoint functors
\[ \Derived \xrightarrow{R_E\otimes_R -} \Derived(R_E) \quad \text{and} \quad \Derived(R_E) \xrightarrow{G} \Derived \]
where $G$ is the forgetful functor. This yields the following well known result (compare~\cite[Proposition VIII.3.2]{EKMM}).

\begin{corollary}
The functors $R_E \otimes_R -$ and $G$ described above induce an equivalence of triangulated categories between $\Im L_\vv$ and $\Derived(R_E)$.
\end{corollary}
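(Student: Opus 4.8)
The plan is to recognize the adjunction $(R_E\otimes_R-,\,G)$ as a reflective localization: I will show that its counit is an isomorphism on all of $\Derived(R_E)$, and that its unit is an isomorphism precisely on the objects of $\Im L_\vv$. Granting these two facts, $G$ becomes fully faithful with essential image $\Im L_\vv$, the restriction of $R_E\otimes_R-$ to $\Im L_\vv$ is a quasi-inverse, and since both functors are exact the resulting equivalence is one of triangulated categories. So the whole proof reduces to analysing unit and counit.

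First I would record that for every $N\in\Derived(R_E)$ the underlying $R$-module $GN$ is $E_*$-local: if $W$ is $E_*$-acyclic then $R_E\otimes_R W=E\otimes_R W\cong 0$, hence by the defining adjunction $\hom_\Derived(W,GN)\cong\hom_{\Derived(R_E)}(R_E\otimes_R W,N)=0$. By the lemma just proved, the full subcategory of $E_*$-local objects is $\Im L_\vv$, so $\Im G\subseteq\Im L_\vv$. Next I would identify the unit $\eta_M\colon M\to G(R_E\otimes_R M)$ of the adjunction with the $E_*$-localization map; this is precisely the isomorphism of coaugmented functors $L_\vv(-)\cong R_E\otimes_R-$ recorded above, under which $\eta_M$ corresponds to $\varepsilon\otimes_R 1_M$. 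Consequently, for $M\in\Im L_\vv$ the morphism $\eta_M$ is an $E_*$-equivalence between $E_*$-local objects (using that $R_E\otimes_R M=L_\vv M$ again lies in $\Im L_\vv$), hence an isomorphism — exactly the argument already used in the proof of the preceding lemma.

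Then I would treat the counit $\theta_N\colon R_E\otimes_R GN\to N$, which is the module action map. The forgetful functor $G$ is conservative (a map of $R_E$-modules is an isomorphism in $\Derived(R_E)$ as soon as it is one in $\Derived$, since $G$ is exact and detects zero objects), so it is enough to show $G(\theta_N)$ is an isomorphism. The triangle identity gives $G(\theta_N)\circ\eta_{GN}=1_{GN}$, where $\eta_{GN}\colon GN\to G(R_E\otimes_R GN)$ is the unit at $GN$, i.e. the $E_*$-localization map of $GN$. By the first step $GN$ is $E_*$-local, so $\eta_{GN}$ is an isomorphism, whence $G(\theta_N)$ is its inverse and in particular an isomorphism. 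Assembling the three observations: $G$ is fully faithful with essential image contained in $\Im L_\vv$ by step one and containing $\Im L_\vv$ by step two, so equal to it, and $R_E\otimes_R-$ restricted to $\Im L_\vv$ is the quasi-inverse.

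I do not expect a genuine obstacle here; as the parenthetical pointer to~\cite[Proposition VIII.3.2]{EKMM} indicates, this is the standard comparison for a smashing Bousfield localization. The only place demanding a little care is the bookkeeping in the last two paragraphs — matching the abstract adjunction unit with the coaugmentation $\varepsilon$ of $E_*$-localization, and invoking conservativity of $G$ — but both are formal once the identification $L_\vv(-)\cong R_E\otimes_R-$ of coaugmented functors from the previous subsection is in hand.
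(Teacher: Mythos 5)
Your proof is correct, and it is essentially the argument the paper has in mind: the paper offers no proof of its own, treating the corollary as well known and pointing to \cite[Proposition VIII.3.2]{EKMM}, and your unit/counit analysis (counit an isomorphism via conservativity of $G$ and the triangle identity, unit identified with the coaugmentation $\varepsilon\otimes_R 1$, which is an isomorphism exactly on $E_*$-local objects $=\Im L_\vv$) is precisely that standard comparison for a smashing localization. No gaps; the only prerequisites you use — the identification $L_\vv(-)\cong R_E\otimes_R-$ as coaugmented functors and the lemma identifying the $E_*$-local objects with $\Im L_\vv$ — are exactly what the paper establishes immediately before the corollary.
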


This completes the proof of Proposition~\ref{pro: Bousfiled localization and spec localization}. \qed

\section{Ring objects arising from regular sequences}
\label{sec: Ring objects arising from regular sequences}

Ring objects are not rings in any standard sense; these are simply objects in the derived category that have structure morphisms like those of $R$-algebras, though they may lack associativity. Nevertheless, ring objects have sufficient similarity to $R$-algebras that we may use them to understand the structure of the localizing subcategory they generate, as we do in Corollary~\ref{cor: The localizing subactegory generated by a field like is minimal}.

In this section we are assuming that $R$ is a commutative $\sphere$-algebra. As before $\Derived$, the derived category of $R$, has a symmetric monoidal product $-\otimes_R-$ whose unit is $R$. There is the natural unit isomorphism $\unitmap_M: R\otimes_R M \to M$ for every $M\in \Derived$. The role of the natural morphism $\unitmap$ has so far been hidden and will mostly remain so. But one must note that whenever we use an isomorphism $M \cong R\otimes_R M$ we are implicitly referring to $\unitmap$. Since $\otimes_R$ is symmetric there is also a natural twist isomorphism $\tau:X \otimes_R Y \to Y\otimes_R X$.

\subsection*{Ring objects}
Following~\cite[Definition V.2.1]{EKMM} we make the following definitions. A \emph{ring object} in $\Derived$ is an object $A$ in $\Derived$ endowed with a unit morphism $u:R \to A$ and a product morphism $m:A \otimes_R A \to A$ which make $A$ into an algebra with respect to the tensor product $\otimes_R$, i.e. the following diagram commutes
\[ \xymatrixcompile{
{R\otimes_R A} \ar[r]^{u \otimes 1} \ar[dr]_\unitmap & {A\otimes_R A} \ar[d]^m &
{A\otimes_R R} \ar[l]_{1\otimes u} \ar[dl]^{\unitmap\tau} \\
& {A} } \]
A morphism of ring objects is a morphism in $\Derived$ which respects the relevant structures. We emphasize that there is no assumption of commutativity nor associativity of $A$.

A (left) \emph{$A$-module} is an object $M\in \Derived$ endowed with a morphism $a:A\otimes_R M \to M$ satisfying $a(u \otimes 1_M) = 1_M$. Again, we do not assume associativity of this structure.

If $A$ is a ring object in $\Derived$ then there is a $\pi^*R$-bilinear pairing $\pi^*A \times \pi^*A \to \pi^*A$ and a unit map $\pi^*R \to \pi^*A$ making $\pi^*A$ into a (possibly non-associative) $\pi^*R$-algebra. If $A$ is associative then $\pi^*A$ is an associative $\pi^*R$-algebra. Similarly, if $M$ is an $A$-module then $\pi^*M$ is a $\pi^*A$-module which is associative when $M$ is associative.

\begin{lemma}
\label{lem: Ring object maps surject module homotopy}
Let $A$ be a ring object in $\Derived$ and let $M$ be an $A$-module. Then the map
\[ u^*:\hom_\Derived^*(A,M) \to \pi^*M\]
induced by the unit $u:R\to A$ is a surjection.
\end{lemma}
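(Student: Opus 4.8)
The plan is to exhibit a section of $u^*$ on the level of homotopy by using the module structure map $a:A\otimes_R M \to M$. Given a homotopy class $x \in \pi^n M$, represented by a morphism $x:\Sigma^{-n}R \to M$ in $\Derived$, I would form the composite
\[
\Sigma^{-n}A \;\xrightarrow{\;1_A\otimes x\;}\; A\otimes_R M \;\xrightarrow{\;a\;}\; M,
\]
after first identifying $\Sigma^{-n}A$ with $A\otimes_R \Sigma^{-n}R$ via the unit isomorphism $\unitmap$ (and the fact that $\otimes_R$ commutes with suspension). Call the resulting class $\tilde{x}\in\hom^n_\Derived(A,M)$. The claim is then that $u^*\tilde{x}=x$, which shows $u^*$ is surjective.

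The key computation is to precompose $\tilde{x}$ with the unit $u:R\to A$ and check it returns $x$. Tracing through the definitions, $u^*\tilde{x}$ is the composite $\Sigma^{-n}R \xrightarrow{u\otimes 1} \Sigma^{-n}A \xrightarrow{1\otimes x} A\otimes_R M \xrightarrow{a} M$, where I have again used $\unitmap$ to identify $\Sigma^{-n}R$ with $R\otimes_R\Sigma^{-n}R$. Now naturality of the twist/unit isomorphisms lets me rewrite $(1_A\otimes x)\circ(u\otimes 1_{\Sigma^{-n}R})$ as $(u\otimes 1_M)\circ(1_R\otimes x)$ up to the coherence isomorphisms, so that
\[
u^*\tilde{x} \;=\; a\circ(u\otimes 1_M)\circ \bigl(R\otimes_R\Sigma^{-n}R \xrightarrow{1\otimes x} R\otimes_R M\bigr).
\]
The defining axiom of an $A$-module, $a\circ(u\otimes 1_M)=\unitmap_M$, collapses this to $\unitmap_M\circ(1_R\otimes x)$, and naturality of $\unitmap$ identifies this with $x$ under $\unitmap_{\Sigma^{-n}R}:R\otimes_R\Sigma^{-n}R\xrightarrow{\sim}\Sigma^{-n}R$. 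Hence $u^*\tilde{x}=x$ as required.

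The only real obstacle is bookkeeping: one must be careful that the identifications $\Sigma^{-n}A\cong A\otimes_R\Sigma^{-n}R$ and $\Sigma^{-n}R\cong R\otimes_R\Sigma^{-n}R$ are made compatibly on both sides of the composite, so that the module axiom applies cleanly. This is exactly the point where the natural unit isomorphism $\unitmap$ (whose role the paper has flagged as mostly hidden) must be invoked explicitly. No associativity or commutativity of $A$ is needed — only the unit axiom for the $A$-module $M$ and naturality of the monoidal coherence isomorphisms — which is consistent with the generality in which ring objects are defined here.
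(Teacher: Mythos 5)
Your proof is correct and takes essentially the same approach as the paper: tensor the given class with $A$, postcompose with the action map $a$ to produce a preimage, and verify via bifunctoriality of $\otimes_R$ plus the unit axiom $a(u\otimes 1_M)=\unitmap_M$ that precomposing with $u$ recovers the original class. The paper packages exactly this argument into a single commutative diagram; your version just spells out the coherence isomorphisms more explicitly.
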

\begin{proof}
Let $a:A\otimes_R M \to M$ be the action morphism of $A$ on $M$. Given a morphism $f:R \to \Sigma^n M$ the following commutative daigram
\[ \xymatrixcompile{
{R} \ar[r]^-\cong \ar[d]^u & {R\otimes_R R} \ar[r]^-{1\otimes f} \ar[d]^{u \otimes 1} & {R \otimes_R \Sigma^n M} \ar[d]^{u \otimes 1}\\
{A} \ar[r]^-\cong & {A\otimes_R R} \ar[r]^-{1\otimes f} & {A \otimes_R \Sigma^n M} \ar[r]^-a & {\Sigma^n M}
}\]
shows that
\[ u^*(a (A\otimes f)) = f\]
\end{proof}

\subsection*{Ring objects from regular sequences}
The main tool we need is the following result from~\cite{EKMM}.

\begin{theorem}[{\cite[Theorem V.2.6]{EKMM}}]
Suppose that $\pi^*R$ is concentrated in even degrees and let $x$ be a regular element in $\pi^*R$. Then $R/x$ has a ring object structure such that the obvious morphism $u_x:R \to R/x$ is the unit morphism.
\end{theorem}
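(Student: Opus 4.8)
The plan is to produce the product morphism $m\colon R/x\otimes_R R/x\to R/x$ directly from the defining triangle of $R/x$, the crucial point being that $x$ acts as zero on $R/x$. Write $B=R/x$. Since $x$ is a regular element, the long exact homotopy sequence of the defining triangle $\Sigma^{-|x|}R\xrightarrow{x}R\xrightarrow{u_x}B$ breaks into short exact pieces, so that $\pi^*B\cong\pi^*R/x\pi^*R$ is again concentrated in even degrees, while the cofibre of $u_x$ is a copy of $R$ shifted by the odd integer $1-|x|$.

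First I would check that the self-map $x_B\colon\Sigma^{-|x|}B\to B$ giving the action of $x$ on $B$ vanishes. As $u_x\colon R\to B$ is a morphism of $R$-modules, naturality of the action yields $x_B\circ\Sigma^{-|x|}(u_x)=u_x\circ x$, and this composite is zero, being two consecutive maps of the defining triangle of $B$. Hence $x_B$ factors through the cofibre of $\Sigma^{-|x|}(u_x)$, that is, through a shift of $R$ by an odd integer, so $x_B$ is classified by an element of an odd-degree homotopy group of $B$; since $\pi^*B$ is concentrated in even degrees, that group is zero and $x_B=0$.

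Next, tensoring the defining triangle of $B$ with $B$ exhibits $B\otimes_R B$ as $B/x$, fitting in a triangle whose first map is exactly $x_B=0$. The triangle therefore splits, and I may choose an isomorphism $B\otimes_R B\cong B\oplus\Sigma^{1-|x|}B$ under which $u_x\otimes 1_B$ is the inclusion of the first summand; take $m$ to be the corresponding projection. Then $m\circ(u_x\otimes 1_B)=\unitmap$, which is the left unit axiom $m(u\otimes 1)=\unitmap$. For the right unit axiom $m(1\otimes u)=\unitmap\tau$, put $\gamma=m\circ(1_B\otimes u_x)$; precomposing each of $u_x\otimes 1_B$ and $1_B\otimes u_x$ with $u_x$ gives $u_x\otimes u_x$, so $\gamma\circ u_x=u_x$, and hence $\gamma-1_B$ factors through the cofibre of $u_x$ and is classified by an element of an odd-degree homotopy group of $B$, which vanishes as before. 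Thus $\gamma=1_B$, so $(B,u_x,m)$ is a ring object; no associativity or commutativity is required, so this completes the construction.

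The hard part is the first step, the vanishing of $x_B$, and this is precisely where the hypothesis that $\pi^*R$ (hence $\pi^*B$) is concentrated in even degrees enters: without it the relevant class in $\pi^*B$ need not be zero, and in general $R/x$ carries no ring structure, the mod-$2$ Moore spectrum being the standard cautionary example. For the complete argument, carried out with the point-set technology of cell $R$-modules, see \cite[Theorem V.2.6]{EKMM}.
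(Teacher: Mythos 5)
Your construction is correct: it is the classical obstruction-theoretic argument for putting a product on the cofibre of a self-map, the evenness of $\pi^*R$ (hence of $\pi^*(R/x)\cong\pi^*R/x\pi^*R$, using regularity of $x$) ensuring that the relevant obstruction classes sit in odd-degree homotopy groups of $R/x$ and therefore vanish. The paper supplies no proof of its own but cites \cite[Theorem V.2.6]{EKMM}, and your derived-category sketch reproduces precisely that argument, including the splitting of $R/x\otimes_R R/x\cong R/x\oplus\Sigma^{1-|x|}R/x$ furnished by the vanishing of $x_{R/x}$ and the verification of the two unit identities by the same odd-degree vanishing.
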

Note that the multiplication morphism $m:R/x \otimes_R R/x \to R/x$ need not be unique. The next result has also been observed in~\cite{EKMM}.

\begin{corollary}
\label{cor: Dividing by a regular sequence may yield a ring object}
Suppose that $\pi^*R$ is concentrated in even degrees and let $X=(x_1,...,x_n)$ be a sequence of non-zero divisors in $\pi^*R$. Then $R/X$ has a ring object structure and the obvious morphism $u:R \to R/X$ is a morphism of ring objects.
\end{corollary}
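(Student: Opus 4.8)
The plan is to reduce to the case $n=1$, which is the preceding theorem \cite[Theorem V.2.6]{EKMM}, by exhibiting $R/X$ as a tensor product of the ring objects $R/x_i$. The point is that, since the action of $\pi^*R$ on $\Derived$ is canonical, tensoring the defining triangle $\Sigma^{-|x|}R \xrightarrow{x} R \to R/x$ with an object $M$ identifies $M \otimes_R R/x$ with $M/x$: the map $1_M\otimes x$ agrees with the action $x_M$ of $x$ on $M$ up to the unit isomorphism $\unitmap$, so the triangle $\Sigma^{-|x|}(M\otimes_R R) \xrightarrow{1\otimes x} M\otimes_R R \to M\otimes_R R/x$ is a defining triangle for $M/x$. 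Iterating this for $x=x_1,\dots,x_n$ gives an isomorphism
\[ R/X \;=\; R/x_1/\cdots/x_n \;\cong\; R/x_1 \otimes_R R/x_2 \otimes_R \cdots \otimes_R R/x_n ,\]
and by the preceding theorem each factor $R/x_i$ carries a ring object structure with unit $u_{x_i}:R\to R/x_i$, using that $\pi^*R$ is concentrated in even degrees and that each $x_i$ is a non-zero divisor in $\pi^*R$.

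Next I would observe that a tensor product over $R$ of ring objects is again a ring object. If $A$ and $B$ are ring objects, with units $u_A,u_B$ and products $m_A,m_B$, then $A\otimes_R B$ is made into a ring object with unit the composite $R \xrightarrow{\sim} R\otimes_R R \xrightarrow{u_A\otimes u_B} A\otimes_R B$ and product
\[ (A\otimes_R B)\otimes_R(A\otimes_R B) \xrightarrow{\ 1\otimes\tau\otimes 1\ } (A\otimes_R A)\otimes_R(B\otimes_R B) \xrightarrow{\ m_A\otimes m_B\ } A\otimes_R B ,\]
where $\tau$ is the twist isomorphism of the symmetric monoidal structure. The unit axiom $m\,(u\otimes 1)=\unitmap$ for $A\otimes_R B$ follows from the corresponding axioms for $A$ and $B$ together with the coherence of the associativity and unit isomorphisms and the naturality of $\tau$; crucially, no associativity or commutativity of $A$ or $B$ is required. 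Applying this $n-1$ times equips $R/x_1\otimes_R\cdots\otimes_R R/x_n$, and hence $R/X$, with a ring object structure.

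It remains to check that the unit of this structure is the obvious morphism $u:R\to R/X$. Under the identification $M/x\cong M\otimes_R R/x$ above, the natural map $M\to M/x$ coming from the defining triangle corresponds to $M\xrightarrow{\sim} M\otimes_R R \xrightarrow{1\otimes u_x} M\otimes_R R/x$; composing these along $x_1,\dots,x_n$ shows that the obvious morphism $R\to R/X$ equals, up to the unit isomorphisms $\unitmap$, the map $u_{x_1}\otimes\cdots\otimes u_{x_n}:R\xrightarrow{\sim}R\otimes_R\cdots\otimes_R R\to R/x_1\otimes_R\cdots\otimes_R R/x_n$, which is precisely the unit of the iterated tensor product ring object constructed in the previous paragraph. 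That $u$ is then a morphism of ring objects is automatic: for any ring object $A$ with unit $u_A$, the unit axiom together with the naturality of $\unitmap$ gives $m_A\,(u_A\otimes u_A) = u_A\circ\unitmap$, which is exactly compatibility of $u_A$ with the products of $R$ and of $A$.

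I expect the only real work to be in the second step: confirming that the unit axiom for the product on $A\otimes_R B$ reduces to a coherence diagram in the symmetric monoidal category $\Derived$, and keeping track of the various instances of $\unitmap$ and $\tau$ so that the iterated construction produces exactly the unit $R\to R/X$ one wants. Everything else is formal, and the case $n=1$ is quoted.
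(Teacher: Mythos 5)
Your proof is correct and follows essentially the same route as the paper: write $R/X \cong R/x_1 \otimes_R \cdots \otimes_R R/x_n$, apply the $n=1$ case (\cite[Theorem V.2.6]{EKMM}) to each factor, and use that a tensor product of ring objects is again a ring object with unit $u_{x_1}\otimes\cdots\otimes u_{x_n}$. The only difference is that the paper simply cites \cite[Lemma V.2.2]{EKMM} for the tensor-product step, whereas you verify it (correctly) by hand.
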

\begin{proof}
As noted in~\cite[Lemma V.2.2]{EKMM}, if $B$ and $B'$ are ring objects in $\Derived$ then so is $B\otimes_R B'$. Since $R/x_i$ has a ring object structure for every $i$ then so is $R/X=R/x_1 \otimes_R \cdots \otimes_R R/x_n$. Let $u_i:R \to R/x_i$ be the unit morphisms of these ring objects, then clearly
\[ u=u_1 \otimes \cdots \otimes u_n: R \to R/x_1 \otimes_R \cdots \otimes_R R/x_n\]
is the unit morphism of $R/X=R/x_1 \otimes_R \cdots \otimes_R R/x_n$.
\end{proof}

\begin{remark}
Although the multiplication on $R/X$ need not be unique, it is not difficult to show that when $X$ is a regular sequence of non-zero divisors then any choice of multiplication on $R/X$ would yield the standard algebra structure on $\pi^*(R/X)$, i.e. the multiplication on the quotient ring $\pi^*R/(X)$ -- where $(X)$ is the ideal generated by $x_1,...,x_n$.
\end{remark}

\subsection*{Module structure}
In what follows $X=(x_1,..,x_n)$ is a regular sequence of non-zero divisors in $\pi^*R$ and we are assuming that $\pi^*R$ is concentrated in even degrees. For an object $M \in \Derived$ we denote by $M/X$ the object $R/X \otimes_R M$, this agrees with our original definition of $M/X$.

\begin{lemma}
\label{lem: Multiplication by divided element is zero}
For every $x \in (X)$ and for every object $M\in \Derived$ the morphism $M/X \to \Sigma^n M/X$ given by multiplication by $x$ is zero.
\end{lemma}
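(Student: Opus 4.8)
The claim is that for $x\in (X)$ and any $M$, multiplication by $x$ on $M/X$ is the zero morphism. Since $M/X = R/X\otimes_R M$ and the $S$-action (here the $\pi^*R$-action) is canonical — i.e. $x_{M/X} = x_{R/X}\otimes_R 1_M$ — it suffices to prove the statement for $M=R$, that is, that multiplication by $x$ on $R/X$ is zero in $\Derived$. Indeed, once $x_{R/X}=0$ we get $x_{M/X} = x_{R/X}\otimes_R 1_M = 0\otimes 1 = 0$ for all $M$, so the first step of the plan is this reduction.

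Next I would reduce to the case $n=1$, i.e. to showing that for a single non-zero-divisor $y\in\pi^*R$ the map $y_{R/y}\colon \Sigma^{-|y|}R/y\to R/y$ is zero. Given that, write a general $x\in(X)$ as $x=\sum_i r_i x_i$ with $r_i\in\pi^*R$; then $x_{R/X} = \sum_i (r_i)_{R/X}\circ (x_i)_{R/X}$ by $\pi^*R$-linearity of the action, so it is enough to see that each $(x_i)_{R/X}$ is zero. But $R/X \cong (R/x_i)\otimes_R (R/\widehat{x_i})$ where $R/\widehat{x_i}$ is the quotient by the remaining elements, and $(x_i)_{R/X} = (x_i)_{R/x_i}\otimes_R 1$, so this follows from the $n=1$ case applied to $R/x_i$ as the ambient object (using again canonicity to move the action onto the $R/x_i$ tensor factor). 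The $n=1$ statement itself is immediate from the defining triangle $\Sigma^{-|y|}R \xrightarrow{y} R \to R/y$: precompose the self-map $y_{R/y}$ with the quotient map $R\to R/y$. By naturality of the $\pi^*R$-action, $y_{R/y}\circ(R\to R/y) = (R\to R/y)\circ y_R$, and $(R\to R/y)\circ y_R$ is zero because it is two consecutive maps in an exact triangle. Hence $y_{R/y}$ factors through the cofiber of $R\to R/y$, which is $\Sigma^{-|y|+1}R$ (a shift of $R$); but a map $\Sigma^{-|y|}R/y \to R/y$ factoring through $\Sigma^{-|y|+1}R$ is determined by an element of $\pi^{*}$ of $R/y$... so I instead argue on the other side: $y_{R/y}$ kills the image of the quotient map, hence factors through $\Sigma^{-|y|}$ of the cofiber of $\Sigma^{-|y|}R\xrightarrow{y}R$, i.e. through $\Sigma^{-|y|}R/y$ — that gives nothing by itself. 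The clean version: apply $\hom_\Derived(R,\Sigma^*(-))$ to the triangle to see $\pi^*(R/y) \cong \pi^*R/(y) \oplus \Sigma^{|y|+1}\big(\mathrm{ann}_{\pi^*R}(y)\big) = \pi^*R/(y)$ since $y$ is a non-zero-divisor, so $\pi^*(R/y)$ is $y$-torsion; but this only shows $y$ acts nilpotently on homotopy, not that $y_{R/y}=0$ in $\Derived$. The actual argument must use the ring-object structure.

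So the real proof should go through the ring object structure of $R/X$ supplied by Corollary~\ref{cor: Dividing by a regular sequence may yield a ring object}, and Lemma~\ref{lem: Ring object maps surject module homotopy}. Here is the plan. By Corollary~\ref{cor: Dividing by a regular sequence may yield a ring object}, $A:=R/X$ is a ring object with unit $u\colon R\to A$, and $M/X = A\otimes_R M$ is an $A$-module. The morphism ``multiplication by $x$'' on $M/X$ equals $x_A\otimes_R 1_M$ where $x_A\colon \Sigma^{-|x|}A\to A$ is the canonical self-map, so again it suffices to show $x_A=0$ in $\Derived$. Now $x_A$ is a morphism $\Sigma^{-|x|}R\otimes_R A\to A$, i.e. (after the unit isomorphism) a morphism $\Sigma^{-|x|}A\to A$ of the underlying objects, but more is true: $x_A = (x_R)\otimes_R 1_A$ factors as $\Sigma^{-|x|}A \cong \Sigma^{-|x|}R\otimes_R A \xrightarrow{x_R\otimes 1} R\otimes_R A \cong A$. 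The point is that $x_A$ is obtained from the element $x_A\in \pi^{|x|}A = \hom_\Derived^{-|x|}(R,A)$ (taken with appropriate regrading) by the $A$-module action: $x_A = a\circ(1_A\otimes \tilde x)$ where $\tilde x\colon R\to \Sigma^{-|x|}A$ is $u\circ x_R$ and $a\colon A\otimes_R A\to A$ is the multiplication. Since $x\in (X)$, its image $u_*(x)\in \pi^*A = \pi^*(R/X)$ is zero — because $\pi^*(R/X) \cong \pi^*R/(X)$ (the Remark following Corollary~\ref{cor: Dividing by a regular sequence may yield a ring object}, or directly from the homotopy long exact sequences of the defining triangles, using that $X$ is a regular sequence). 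Therefore $\tilde x = u\circ x_R\colon R\to \Sigma^{-|x|}A$ is the zero morphism, being the image of $x$ under the map $\pi^*R\to\pi^*A$. Consequently $x_A = m\circ(1_A\otimes\tilde x)\circ(\text{unit iso}) = 0$, and hence $x_{M/X} = x_A\otimes_R 1_M = 0$.

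The step I expect to be the main obstacle is the bookkeeping identity $x_A = m\circ(1_A\otimes \tilde x)\circ \unitmap^{-1}$, i.e. expressing the canonical action map ``multiply by $x$'' on the $A$-module $A$ in terms of the ring multiplication applied to the image of $x$ in $\pi^*A$. This is the analogue of the commutative diagram used in the proof of Lemma~\ref{lem: Ring object maps surject module homotopy}, and it hinges on the compatibility of the $S$-action on $\Derived$ being \emph{canonical} (factoring through $S\to\pi^*R$ and the tensor product) together with the unit axiom $m\circ(u\otimes 1_A) = \unitmap$ for the ring object $A$; chasing the resulting pentagon of unit/twist isomorphisms is the only genuinely technical part. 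Everything else — the reduction from $M$ to $A$, the identification $\pi^*(R/X)\cong\pi^*R/(X)$, and the conclusion once $\tilde x=0$ — is routine.
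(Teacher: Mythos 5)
Your final argument is correct, and it is essentially a self-contained version of what the paper does by citation. The paper's proof also first reduces to $M=R$, then quotes \cite[Lemma V.2.4]{EKMM} to get that multiplication by $x_i$ is null on $R/x_i$ (once $R/x_i$ is given a ring-object structure), pushes this across the tensor decomposition $R/X = R/x_1 \otimes_R \cdots \otimes_R R/x_n$ to conclude that each $x_i$ acts as zero on $R/X$, and implicitly finishes by $\pi^*R$-linearity for a general $x=\sum r_i x_i$. What you do differently is to skip the per-generator step and the tensor decomposition entirely: you give $A=R/X$ its ring-object structure from Corollary~\ref{cor: Dividing by a regular sequence may yield a ring object} directly, observe that the canonical action of $x$ on the $A$-module $A$ is $m\circ(1_A\otimes\tilde{x})$ up to unit isomorphisms with $\tilde{x}=u\circ x_R$, and then use that $u_*(x)=0$ in $\pi^*A\cong\pi^*R/(X)$ (for which regularity of $X$ is needed, as the paper's remark after Corollary~\ref{cor: Dividing by a regular sequence may yield a ring object} records). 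This is, in effect, inlining the proof of the EKMM lemma you would otherwise cite and applying it to $R/X$ in one stroke rather than to each $R/x_i$. What your route buys is that it immediately handles an arbitrary $x\in(X)$ rather than just the generators, so no separate linearity step is needed; the paper's route is shorter because it outsources the diagram chase to EKMM. The bookkeeping identity you single out as the technical obstacle is indeed the crux, but it follows directly from the unit axiom $m\circ(1_A\otimes u)=\unitmap\tau$ applied after factoring $\tilde{x}=u\circ x_R$, and your plan for it is sound.
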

\begin{proof}
It is enough to prove the lemma for the case where $M=R$. First, let $x\in\pi^*R$ be such that $R/x$ has a ring object structure, then by~\cite[Lemma V.2.4]{EKMM} the morphism $R/x \to \Sigma^n R/x$ given by multiplication by $x$ is the zero morphism. Since $X=(x_1,...,x_n)$ and $R/X=R/x_1 \otimes_R \cdots \otimes_R R/x_n$ we see that multiplication by $x_i$ induces the zero morphism on $R/X$ for all $i=1,...,n$, which completes the proof.
\end{proof}

An immediate consequence of Lemma~\ref{lem: Multiplication by divided element is zero} is the following corollary.

\begin{corollary}
\label{cor: Homotopy of M/X is associative module}
For every object $M\in \Derived$ the $\pi^*R$-module $\pi^*(M/X)$ is naturally a $\pi^*R/(X)$-module.
\end{corollary}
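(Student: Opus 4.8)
The plan is to deduce this immediately from Lemma~\ref{lem: Multiplication by divided element is zero}. Recall first how $\pi^*R$ acts on homotopy: for any object $N\in\Derived$ the graded $\pi^*R$-module structure on $\pi^*N$ is the one induced by the canonical action of $\pi^*R$ on $\Derived$, so a homogeneous element $s\in\pi^*R$ acts on a class $f\in\pi^*N$ by post-composition with the natural morphism $s_N$. Consequently, if $s_N$ is the zero morphism in $\Derived$, then $s$ annihilates $\pi^*N$. I would also note at the outset that the ideal $(X)=(x_1,\dots,x_n)$ of $\pi^*R$ is homogeneous, since each $x_i$ is homogeneous (indeed of even degree), so the graded quotient ring $\pi^*R/(X)$ makes sense.

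With these observations in place the argument is short. Since $\pi^*(M/X)$ is already a graded $\pi^*R$-module, it suffices to check that every homogeneous element $x\in(X)$ acts on it as zero, for then the $\pi^*R$-action factors uniquely through $\pi^*R/(X)$. But for such an $x$, Lemma~\ref{lem: Multiplication by divided element is zero} says precisely that the morphism $M/X\to\Sigma^{\,\bullet}M/X$ given by multiplication by $x$ is the zero morphism; post-composing classes in $\pi^*(M/X)$ with it yields $0$, so $x$ acts as zero, as required.

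For the naturality claim, given a morphism $g\colon M\to M'$ in $\Derived$ one tensors with $R/X$ to get $g/X\colon M/X\to M'/X$, and the induced map $\pi^*(M/X)\to\pi^*(M'/X)$ is $\pi^*R$-linear because the canonical action on homotopy is natural; since source and target are both annihilated by $(X)$, this map is automatically $\pi^*R/(X)$-linear. Thus $M\mapsto\pi^*(M/X)$, viewed as a functor to $\pi^*R/(X)$-modules, refines the $\pi^*R$-module-valued functor $M\mapsto\pi^*(M/X)$.

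\textbf{Main obstacle.} There is essentially none; the corollary is formal. The only point that deserves a word of care is the identification of the Koszul ``multiplication by $x$'' endomorphism of $M/X$ with the module action of $x\in\pi^*R$ on $\pi^*(M/X)$ — but this is exactly the definition of the canonical action recalled in Section~\ref{sec: Stratification of a derived category}, and its vanishing is the content of Lemma~\ref{lem: Multiplication by divided element is zero}.
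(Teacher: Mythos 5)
Your argument is exactly the one the paper intends: the paper states the corollary as an immediate consequence of Lemma~\ref{lem: Multiplication by divided element is zero}, and your proof simply spells out the routine details (the $\pi^*R$-action factors through $\pi^*R/(X)$ because every $x\in(X)$ acts as zero, plus the naturality check). Correct and in line with the paper.
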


\subsection*{Regular local rings}
We now consider the case where $\pi^*R$ is a graded local ring concentrated in even degrees and $X=(x_1,...,x_n)$ is a regular sequence of non-zero divisors in $\pi^*R$ generating the maximal ideal $(X)$. Hence $\pi^*(R/X)$ is a graded field. Since $\pi^*R$ is generated in even degrees then $R/X$ has a ring object structure whose unit we denote by $u:R \to R/X$.

\begin{lemma}
\label{lem: Kernel of mapping from a field is zero}
Let $f:R/X \to M$ be a morphism in $\Derived$ such that $\pi^*f \neq 0$. Then the kernel of $\pi^*f$ is zero.
\end{lemma}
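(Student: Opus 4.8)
The plan is to use the fact that $\pi^*(R/X)$ is a graded field, so $\pi^*f \neq 0$ implies that $\pi^* f$ is injective on the underlying graded $\pi^*(R/X)$-module. The subtlety is that $f$ is merely a morphism in $\Derived$, not a ring object morphism, and that the kernel of $\pi^* f$ should be understood as a $\pi^*R$-submodule of $\pi^*(R/X)$; by Corollary~\ref{cor: Homotopy of M/X is associative module} this is the same as a $\pi^*(R/X)$-submodule of $\pi^*(R/X)$. First I would observe that any element $z \in \pi^*(R/X)$ in the kernel of $\pi^*f$ generates a $\pi^*(R/X)$-submodule, and since $\pi^*(R/X)$ is a graded field, either $z=0$ or this submodule is all of $\pi^*(R/X)$ — in which case $\pi^*f = 0$, contradicting our hypothesis. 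So the only genuinely algebraic content is recognizing $\pi^*f$ as a $\pi^*(R/X)$-module map.

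**Reducing to a module map.** The crux is that $\pi^* f : \pi^*(R/X) \to \pi^* M$ is not a priori $\pi^*(R/X)$-linear — it is only $\pi^*R$-linear. However, the source $\pi^*(R/X)$ is a cyclic $\pi^*(R/X)$-module generated by the unit $1 = u_*(1)$, so $\ker(\pi^*f)$ is cut out inside $\pi^*(R/X)$ by the condition on a single generator together with $\pi^*R$-linearity. Concretely, since $\pi^*(R/X) = \pi^*R/(X)$ is generated over $\pi^*R$ by $1$, and $\pi^* f$ is determined by $\pi^* f(1)$, the kernel of $\pi^* f$ is the annihilator in $\pi^*(R/X)$ of $\pi^*f(1) \in \pi^* M$. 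Viewing $\pi^* M$ (or rather the $\pi^*(R/X)$-submodule of $\pi^* M$ generated by $\pi^* f(1)$, which by Lemma~\ref{lem: Multiplication by divided element is zero} applied to $M$ is indeed a $\pi^*(R/X)$-module) this annihilator is a homogeneous ideal of the graded field $\pi^*(R/X)$, hence is either zero or everything. It is everything precisely when $\pi^* f(1) = 0$, i.e. when $\pi^* f = 0$. So $\pi^*f \neq 0$ forces $\ker \pi^* f = 0$.

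**Main obstacle.** The delicate point is making sure that whatever module structure is used on the target of $\pi^* f$ is compatible with $\pi^* f$ being linear — but this is exactly what Lemma~\ref{lem: Ring object maps surject module homotopy} and Lemma~\ref{lem: Multiplication by divided element is zero} are for. I would argue: the image of $\pi^* f$ lands in the $\pi^*R$-submodule $N \subset \pi^* M$ generated by $\pi^* f(1)$ and its $\pi^* R$-multiples; since $x \cdot (\pi^* f(1)) = \pi^* f(x \cdot 1) = \pi^* f(0) = 0$ for all $x \in (X)$, the module $N$ is annihilated by $(X)$, hence is a $\pi^*(R/X)$-module, and $\pi^* f$ factors as a $\pi^*(R/X)$-module map $\pi^*(R/X) \to N$. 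A nonzero map out of a one-dimensional vector space over a (graded) field is injective — this is the whole point. The only thing to be careful about is the grading bookkeeping and the fact that "graded field" means every nonzero homogeneous element is a unit; I expect this to be entirely routine.

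**Summary of steps.**
I would carry out the proof in this order: (1) note $\pi^*f(1) \neq 0$ is equivalent to $\pi^* f \neq 0$ since $\pi^*(R/X)$ is generated by $1$ over $\pi^* R$; (2) use Lemma~\ref{lem: Multiplication by divided element is zero} to see the $\pi^*R$-submodule of $\pi^* M$ generated by $\pi^* f(1)$ is a $\pi^*(R/X)$-module and that $\pi^* f$ is $\pi^*(R/X)$-linear onto it; (3) invoke that $\pi^*(R/X)$ is a graded field, so this cyclic module is either zero or free of rank one, and in the nonzero case $\pi^* f$ is injective. The main obstacle — and it is a mild one — is keeping the distinction between $\pi^*R$-linearity and $\pi^*(R/X)$-linearity straight, which the lemmas already stated resolve cleanly.
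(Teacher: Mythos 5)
Your proof is correct, and it rests on the same pivot as the paper's argument -- a nonzero homogeneous element of the graded field $\pi^*(R/X)\cong\pi^*R/(X)$ generates everything, so a nonzero kernel would force $\pi^*f=0$ -- but it reaches that pivot by a different mechanism. The paper lifts a putative nonzero kernel element $y$ to $y'\in\pi^*R$, realizes multiplication by $y'$ as a self-map $\varphi$ of $R/X$ in $\Derived$, checks that $\varphi$ is invertible (lifting $y^{-1}$ as well), and deduces from the surjectivity of $\pi^*\varphi$ that every element of $\pi^*(R/X)$ is a $\pi^*R$-multiple of $y$, whence $\pi^*R$-linearity of $\pi^*f$ kills everything. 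You never leave homotopy groups: the kernel of the $\pi^*R$-linear map $\pi^*f$ is a graded $\pi^*R$-submodule of $\pi^*(R/X)$, and since $(X)$ annihilates $\pi^*(R/X)$ (Corollary~\ref{cor: Homotopy of M/X is associative module}, or simply the identification $\pi^*(R/X)\cong\pi^*R/(X)$ supplied by regularity of the sequence) it is a graded subspace of a one-dimensional vector space over the graded field $\pi^*R/(X)$, hence zero or all of it. Your route is shorter and needs less of the ring-object apparatus; in fact Lemma~\ref{lem: Ring object maps surject module homotopy}, which you cite, is not needed here (it enters only in the following lemma), and all you really use is that $X$ is regular, so that $\pi^*(R/X)$ is the residue field as a $\pi^*R$-module. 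What the paper's detour through the explicit morphism $\varphi$ buys is that one works only with maps in $\Derived$ and never has to discuss how the $\pi^*(R/X)$-module (or ring) structure interacts with the $\pi^*R$-action -- precisely the compatibility point you take care of via Lemma~\ref{lem: Multiplication by divided element is zero} and the bilinearity of the ring-object pairing.
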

\begin{proof}
Suppose $y$ is a non-zero element in the kernel of $\pi^*f$. Then there exists $y' \in \pi^*R$ such that $y=u y'$. Consider the morphism $\varphi:\Sigma^n R/X \to R/X$ which is multiplication by $y'$. Clearly $\pi^*(\varphi)(1)=y$. Note that $\varphi$ is invertible -- since $y$ is invertible we can choose an element $z' \in \pi^*R$ such that $y^{-1}=z' u$, clearly multiplication by $z'$ is the inverse of $\varphi$. Hence $\pi^*\varphi$ is an isomorphism and therefore for every $b \in \pi^*(R/X)$ we have that $b=ay$ for some $a \in \pi^* R$. But this implies $\pi^*f = 0$, in contradiction.
\end{proof}

\begin{lemma}
\label{lem: Field module is direct sum}
For every $M \in \Derived$ the object $M/X$ is equal to a direct sum of copies of suspensions of $R/X$. In particular, if $M/X\neq 0$ then $R/X$ is a retract of $M/X$.
\end{lemma}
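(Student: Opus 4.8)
The plan is to exploit the fact that $\pi^*(R/X)$ is a graded field and that $R/X$ is a ring object, so $M/X = R/X \otimes_R M$ is an $R/X$-module. First I would set $A = R/X$, which is a graded field in the sense that $\pi^*A$ is a graded field, and let $N = M/X = A \otimes_R M$, which is an $A$-module via the (possibly non-associative) action morphism $a : A \otimes_R N \to N$. The goal is to show $N$ is a coproduct of suspensions of $A$. Choose a homogeneous $k$-basis (equivalently $\pi^*A$-basis) $\{y_\lambda\}_{\lambda \in \Lambda}$ of the graded $\pi^*A$-module $\pi^*N$, with $y_\lambda \in \pi^{n_\lambda} N$. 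Since $\pi^*N = \hom^*_\Derived(R, N)$, each $y_\lambda$ is a morphism $R \to \Sigma^{n_\lambda} N$, and by Lemma~\ref{lem: Ring object maps surject module homotopy} applied to the $A$-module $N$ it lifts along the unit $u : R \to A$ to a morphism $\tilde{y}_\lambda : A \to \Sigma^{n_\lambda} N$ with $u^* \tilde{y}_\lambda = y_\lambda$, i.e. $\pi^*(\tilde{y}_\lambda)(1) = y_\lambda$.

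Next I would assemble these into a single morphism
\[ \Phi = \bigoplus_{\lambda} \tilde{y}_\lambda : \bigoplus_{\lambda \in \Lambda} \Sigma^{-n_\lambda} A \longrightarrow N, \]
and show it is an isomorphism by checking it induces an isomorphism on $\pi^*$ (both source and target being built from $R$, and $\Derived$ being the derived category of $R$, a $\pi^*$-isomorphism is an equivalence). On homotopy, $\pi^*$ of the source is $\bigoplus_\lambda \Sigma^{-n_\lambda} \pi^*A$, a free graded $\pi^*A$-module on generators dual to the $y_\lambda$. The key point is that $\pi^*\Phi$ is a map of $\pi^*A$-modules sending the $\lambda$-th generator to $y_\lambda$: here I would use Corollary~\ref{cor: Homotopy of M/X is associative module}, which says $\pi^*N$ is genuinely a $\pi^*R/(X) = \pi^*A$-module, and the analogous statement that $\pi^*\Phi$ respects the $\pi^*A$-action (because $\Phi$ is built from $A$-module maps $\tilde{y}_\lambda$, whose effect on $\pi^*$ is $\pi^*A$-linear — this follows from the module axiom $a(u\otimes 1) = 1$ together with the pairing $\pi^*A \times \pi^*N \to \pi^*N$). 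Since $\{y_\lambda\}$ was chosen to be a $\pi^*A$-basis of $\pi^*N$, the induced $\pi^*A$-linear map $\pi^*\Phi$ is an isomorphism. Hence $\Phi$ is an isomorphism in $\Derived$, proving $M/X \cong \bigoplus_\lambda \Sigma^{-n_\lambda} R/X$.

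Finally, the last sentence: if $M/X \neq 0$ then $\Lambda$ is nonempty, so at least one suspension $\Sigma^{-n_\lambda} R/X$ is a summand of $M/X$; since $\pi^*(R/X)$ is a graded field concentrated in even degrees, $\Sigma^{-n_\lambda} R/X$ is isomorphic (after possibly shifting by an even generator, or simply because the basis element can be taken in the appropriate degree class) in such a way that $R/X$ is a retract of $M/X$ — more directly, $R/X$ is a retract of $\Sigma^{-n_\lambda}R/X \oplus (\text{rest})$ via the inclusion and projection composed with multiplication by a suitable unit in $\pi^*R$ pulling the generator into degree $0$, as in the argument of Lemma~\ref{lem: Kernel of mapping from a field is zero}. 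The main obstacle I anticipate is the bookkeeping around non-associativity: I must be careful that Lemma~\ref{lem: Ring object maps surject module homotopy} only needs the unital module axiom (which it does), and that the $\pi^*A$-linearity of $\pi^*\Phi$ on homotopy groups does not secretly require associativity of the $A$-action — checking this amounts to the computation in the proof of Lemma~\ref{lem: Ring object maps surject module homotopy}, applied elementwise, so it should go through, but it is the step that needs genuine care rather than a one-line citation.
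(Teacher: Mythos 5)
Your argument is essentially the paper's: choose a basis of $\pi^*(M/X)$ over the graded field $\pi^*(R/X)$, lift each basis element through the unit using Lemma~\ref{lem: Ring object maps surject module homotopy}, assemble the lifts into a single morphism out of a coproduct of suspensions of $R/X$, and check that it is a $\pi^*$-isomorphism. The only divergence is in the verification step: the paper shows each component $\Psi_b$ has $\Ker\pi^*\Psi_b=0$ via Lemma~\ref{lem: Kernel of mapping from a field is zero} and then concludes, whereas you argue that the assembled map is $\pi^*(R/X)$-linear and sends free generators to a basis. Your version is fine, and the step you flag as needing care in fact dissolves: any morphism in $\Derived$ automatically induces a $\pi^*R$-linear map on homotopy, and by Corollary~\ref{cor: Homotopy of M/X is associative module} the $\pi^*R$-actions on both source and target factor through $\pi^*R/(X)$, so $\pi^*\Phi$ is $\pi^*R/(X)$-linear with no appeal to the (possibly non-associative, non-canonical) ring-object pairing. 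One caveat on your last step: the graded field $\pi^*(R/X)$ need not contain units of nonzero degree (e.g.\ it may be $k$ concentrated in degree $0$), so "shifting by an even generator" does not in general produce $R/X$ itself as a summand; what the decomposition gives — and what the paper's own proof gives, and all that the application in Corollary~\ref{cor: The localizing subactegory generated by a field like is minimal} needs — is that some suspension of $R/X$ is a retract of $M/X$, which is how the "in particular" should be read.
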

\begin{proof}
From Corollary~\ref{cor: Homotopy of M/X is associative module} it follows that $\pi^*(M/X)$ is a vector space over the graded field $\pi^*(R/X)$. Choose a basis $B \subset \pi^*(M/X)$. By Lemma~\ref{lem: Ring object maps surject module homotopy} for every $b \in B$ there is a morphism $\Psi_b:\Sigma^{|b|} R/X \to M$ such $(\pi^*\Psi_b)(1)=b$. From Lemma~\ref{lem: Kernel of mapping from a field is zero} we see that $\Ker \pi^*\Psi_b =0$. Hence the morphism:
\[ \bigoplus_{b \in B} \Psi_b: \bigoplus_{b \in B}\Sigma^{|b|} R/X \to M\]
induces an isomorphism on homotopy groups and is therefore an isomorphism.
\end{proof}

\begin{lemma}
\label{lem: hom (A/X,-) is like A/X otimes}
For every object $M$ in $\Derived$ we have that
\[ \hom_\Derived^*(R/X,M)=0 \ \Longleftrightarrow \ M/X =0\]
\end{lemma}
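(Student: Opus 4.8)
The plan is to establish the two implications separately, in both cases passing through the internal function object $\Hom_R$ so that we can detect vanishing on homotopy groups. Recall from the subsection on tensor and cotensor that $\pi^*\Hom_R(R/X,M) \cong \hom^*_\Derived(R/X,M)$, so the left-hand side of the equivalence is really a statement about $\pi^*\Hom_R(R/X,M)$.

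First I would treat the easy direction, $M/X=0 \Rightarrow \hom^*_\Derived(R/X,M)=0$. Here one induces on the length $n$ of the sequence. For $n=1$ and a single regular element $x=x_1$ one has the defining triangle $\Sigma^{-|x|}R \xrightarrow{x} R \to R/x$; applying $\hom^*_\Derived(-,M)$ gives a long exact sequence showing $\hom^*_\Derived(R/x,M)$ is built from the kernel and cokernel of multiplication by $x$ on $\pi^*M$, while $M/x=0$ says exactly that multiplication by $x$ on $\pi^*M$ is an isomorphism (again using the triangle $\Sigma^{-|x|}M \xrightarrow{x} M \to M/x$). For the inductive step, write $R/X = (R/X')\otimes_R R/x_n$ where $X'=(x_1,\dots,x_{n-1})$; then $M/X = (M/X')/x_n$ vanishing forces multiplication by $x_n$ to be invertible on $\pi^*(M/X')$, and a similar triangle argument together with the adjunction $\Hom_R(R/X',M) $ reduces the claim for $X$ to the claim for $X'$ applied to $\Hom_R(R/x_n,M)$ — or, more directly, one observes $\hom^*_\Derived(R/X,M)\cong \hom^*_\Derived(R/X',\Hom_R(R/x_n,M))$ and that $\Hom_R(R/x_n,M)/X' = \Hom_R(R/x_n, M/X')$, whose relevant homotopy is controlled as above.

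For the converse, $\hom^*_\Derived(R/X,M)=0 \Rightarrow M/X=0$, the key input is Lemma~\ref{lem: Field module is direct sum}: $M/X$ is a direct sum of suspensions of $R/X$, so if $M/X\neq 0$ then $R/X$ is a retract of $M/X$. It therefore suffices to produce a nonzero map $R/X\to M$ whenever $M/X\neq 0$. Since $R/X$ is a ring object with unit $u:R\to R/X$, Lemma~\ref{lem: Ring object maps surject module homotopy} applied to the $R/X$-module $M/X$ shows $u^*:\hom^*_\Derived(R/X,M/X)\to \pi^*(M/X)$ is surjective; choosing $b\in\pi^*(M/X)$ nonzero gives a nonzero morphism $R/X \to M/X$. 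Composing with the canonical map $\mu_X:M/X = R/X\otimes_R M \to M$ — or rather, noting that a retract $R/X \to M/X \to R/X$ exists — yields a nonzero element of $\hom^*_\Derived(R/X, M)$, provided the composite stays nonzero. The small gap to close here is ensuring the map $R/X\to M/X$ we build detects the retraction, i.e. composing with the splitting $M/X\to R/X$ from Lemma~\ref{lem: Field module is direct sum} is nonzero; this follows because $u^*$ of the resulting endomorphism of $R/X$ is a nonzero element of $\pi^*(R/X)$, hence invertible since $\pi^*(R/X)$ is a graded field, so the endomorphism is an isomorphism and in particular nonzero.

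I expect the main obstacle to be the converse direction, specifically the bookkeeping needed to convert "$R/X$ is a retract of $M/X$" into a nonzero element of $\hom^*_\Derived(R/X,M)$ rather than merely of $\hom^*_\Derived(R/X,M/X)$. The cleanest route is probably to argue contrapositively throughout: assume $M/X\neq 0$, extract the retract, and then use that $R/X$ finitely builds... no — instead use that the retract gives $R/X\in\loc_\Derived(M/X)$ together with the fact (implicit in the surjectivity of $u^*$ on modules) that any nonzero $R/X$-module $N$ has $\hom^*_\Derived(R/X,N)\neq 0$; applying this with $N=M/X$ and then transporting along $M/X\to M$ requires only that the composite $R/X\to M/X\to M$ be nonzero, which one checks on homotopy using that $\pi^*(M/X)$ injects nowhere problematic. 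If this transport proves awkward, the fallback is to invoke the earlier identity $L_\vv$-style adjunctions less and instead prove directly by induction on $n$ that $\hom^*_\Derived(R/X,M)=0$ iff multiplication by the $x_i$ is suitably invertible, matching the first direction's machinery; either way the computation is routine once the framework is set up.
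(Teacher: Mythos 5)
Your easy direction (by induction on $n$, using the long exact sequence from the defining triangle of $R/x_i$ and the adjunction $\hom^*_\Derived(R/X'\otimes_R R/x_n,M)\cong\hom^*_\Derived(R/X',\Hom_R(R/x_n,M))$ together with the compactness of $R/X'$) is correct. The problem is your converse. You want to promote a nonzero map $R/X\to M/X$ to a nonzero element of $\hom^*_\Derived(R/X,M)$ by composing with a ``canonical map $\mu_X\colon M/X\to M$'', and later again you speak of ``transporting along $M/X\to M$.'' No such map exists: the defining triangle $\Sigma^{-|x|}M\xrightarrow{x}M\to M/x$ gives a natural map $M\to M/x$, in the \emph{wrong} direction, and the only map $M/x\to\Sigma^{?}M$ in sight is the connecting morphism, which requires a genuine argument (which you never supply) to see that precomposition with your chosen $R/X\to M/X$ stays nonzero. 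The phrase ``$\pi^*(M/X)$ injects nowhere problematic'' is a placeholder, not a proof. Moreover, this whole branch of your argument leans on Lemma~\ref{lem: Field module is direct sum} and hence on the regular-local/graded-field hypotheses, whereas the remark following the lemma in the paper records that the statement holds for any commutative $\sphere$-algebra and any sequence; so even if repaired, this route proves something strictly weaker.

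The paper's proof is much shorter and does both directions at once: applying $\Hom_R(-,M)$ to the defining triangle of $R/x$ yields an exact triangle identifying $\Hom_R(R/x,M)$ with the \emph{fibre} of multiplication by $x$ on $M$, while $M/x$ is by definition its \emph{cofibre}; rotating, $M/x\cong\Sigma^{|x|+1}\Hom_R(R/x,M)$. Iterating over the sequence $X$ gives $M/X\cong\Sigma^m\Hom_R(R/X,M)$ for some $m$, and since $\pi^*\Hom_R(R/X,M)\cong\hom^*_\Derived(R/X,M)$ the equivalence is immediate. Your ``fallback'' inductive plan---show by induction that both sides vanish iff multiplication by each $x_i$ acts invertibly on the relevant homotopy---is essentially an unrolled version of this self-duality and would work if carried out carefully, but as written it is only gestured at. I would recommend abandoning the retract route entirely and either carrying out the induction in full or, better, proving the single isomorphism $M/X\cong\Sigma^m\Hom_R(R/X,M)$ as the paper does.
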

\begin{proof}
Recall that $\Hom_R$ denotes the internal function object in $\Derived$ and that $\Hom_R (R,M) \cong M$. Let $x \in \pi^*R$ and let $\chi:\Sigma^{|x|}R \to R$ be the morphism representing $x$. It is easy to see that $\Hom_R(\Sigma^{-|x|}\chi,M) = \chi \otimes_R M$. Hence the following is an exact triangle
\[ \Sigma^{|x|}\Hom_R(R/x,M) \to \Sigma^{|x|} M \xrightarrow{x} M \to R/x \otimes_R M\]
and so $M/x \cong \Sigma^{|x|+1} \Hom_R(R/x,M)$. Recall that $X=(x_1,...,x_n)$, a simple inductive argument shows that $M/X \cong \Sigma^m\Hom_R(R/X,M)$ for some $m$.
\end{proof}

\begin{remark}
Lemma~\ref{lem: hom (A/X,-) is like A/X otimes} above is in fact true for any commutative $\sphere$-algebra $R$ and for any sequence $x_1,...,x_n \in \pi^*R$.
\end{remark}

\begin{corollary}
\label{cor: The localizing subactegory generated by a field like is minimal}
The localizing subcategory generated by $R/X$ is minimal.
\end{corollary}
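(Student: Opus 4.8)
The plan is to unwind the definition of minimality. Since $\pi^*(R/X)$ is a graded field, $R/X \neq 0$ and hence $\loc_\Derived(R/X) \neq 0$; so it remains to show this localizing subcategory contains no nonzero proper localizing subcategory. Equivalently, I will show $\loc_\Derived(M) = \loc_\Derived(R/X)$ for every nonzero $M \in \loc_\Derived(R/X)$. One inclusion, $\loc_\Derived(M) \subseteq \loc_\Derived(R/X)$, is immediate from $M \in \loc_\Derived(R/X)$, so the task reduces to proving $R/X \in \loc_\Derived(M)$.

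The substantive step --- and the one I expect to be the crux --- is to show $M/X \neq 0$ for every nonzero $M \in \loc_\Derived(R/X)$. Suppose to the contrary that $M/X = 0$. By Lemma~\ref{lem: hom (A/X,-) is like A/X otimes} this is equivalent to $\hom_\Derived^*(R/X, M) = 0$. Now consider the full subcategory $\mathsf{B} = \{\, Y \in \Derived \mid \hom_\Derived^*(Y, M) = 0 \,\}$. It is closed under suspensions, retracts, exact triangles, and coproducts (since $\hom_\Derived(-, \Sigma^n M)$ carries coproducts to products), hence it is a localizing subcategory, and by assumption $R/X \in \mathsf{B}$. Therefore $\loc_\Derived(R/X) \subseteq \mathsf{B}$, so $M \in \mathsf{B}$, i.e. $\hom_\Derived^*(M, M) = 0$; in particular $1_M = 0$ and $M = 0$, contradicting the choice of $M$. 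Thus $M/X \neq 0$. The one place where I genuinely use that $M$ lies in $\loc_\Derived(R/X)$, rather than being an arbitrary object, is exactly this orthogonality argument.

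With $M/X \neq 0$ in hand the rest is formal. The object $M/X = R/X \otimes_R M$ is obtained from $M$ by finitely many Koszul steps $N \mapsto N/x_i$, each of which is a mapping cone, so $M/X \in \thick_\Derived(M) \subseteq \loc_\Derived(M)$. By Lemma~\ref{lem: Field module is direct sum}, since $M/X \neq 0$, the object $R/X$ is a retract of $M/X$; as localizing subcategories are closed under retracts, $R/X \in \loc_\Derived(M)$. Consequently $\loc_\Derived(R/X) \subseteq \loc_\Derived(M)$, and together with the reverse inclusion noted above this gives $\loc_\Derived(M) = \loc_\Derived(R/X)$, which is precisely the assertion that $\loc_\Derived(R/X)$ is minimal.
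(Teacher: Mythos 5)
Your proof is correct and follows the same route as the paper's (via Lemma~\ref{lem: hom (A/X,-) is like A/X otimes}, Lemma~\ref{lem: Field module is direct sum}, and the observation that $M$ builds $M/X$); the only difference is that you spell out, via the standard orthogonality argument, the step the paper takes for granted, namely that a nonzero $M \in \loc_\Derived(R/X)$ must have $\hom^*_\Derived(R/X, M) \neq 0$.
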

\begin{proof}
Let $M \in \loc_\Derived(R/X)$ be a non-zero object, then $\hom^*_\Derived (R/X,M) \neq 0$. From Lemma~\ref{lem: hom (A/X,-) is like A/X otimes} it follows that $M/X \neq 0$ and hence $R/X$ is a retract of $M/X$ by Lemma~\ref{lem: Field module is direct sum}. Since $M \builds M/X$ we see that also $M \builds R$.
\end{proof}

\section{Colocalization methods}
\label{sec: Colocalization methods}

Throughout this section $R$ is a commutative $\sphere$-algebra and as usual $\Derived$ denotes $\Derived(R)$. In this section we give two methods for constructing certain colocalization functors.

\subsection*{The localizing subcategory generated by a Koszul object}
Let $X=(x_1,...,x_n)$ be a sequence of elements in $\pi^*R$. Our ultimate objective here is to describe $\loc_\Derived(R/X)$ in terms of homotopy groups of objects. We will do this by following classical ideas from local cohomology, namely the Grothendieck construction of local cohomology. These ideas can be found in the work of Greenlees and May~\cite{GreenleesMay} (see also~\cite{DwyerGreenlees}) and are by now well known, so there is no claim for originality here.

Given $x \in \pi^*R$ let $K_x$ be the object defined by the exact triangle
\[ K_x \to R \xrightarrow{x} \Sigma^{-|x|} R \]
in other words $K_x =\Sigma^{|x|+1}R/x$. Let $R[1/x]$ be the homtopy colimit of the the telescope
\[ R\xrightarrow{x} \Sigma^{-|x|} R\xrightarrow{x} \Sigma^{-2|x|} R \to \cdots\]
and let $K_x^\infty$ be defined by the exact triangle
\[ K_x^\infty \to R \to R[1/x]\]
For the sequence $X=(x_1,...,x_n)$ of elements in $\pi^*R$ we denote by $K_X^\infty$ the tensor product \[K_{x_1}^\infty \otimes_R \cdots \otimes_R K_{x_n}^\infty\]
and by $K_X$ the object $\Sigma^{-n}R/X=K_{x_1} \otimes_R \cdots \otimes_R K_{x_n}$.

\begin{lemma}
\label{lem: A[1/x] is Kx null}
Let $x \in \pi^*R$, then $R[1/x] \otimes_R K_x = 0$ and $\hom_\Derived^*(K_x,R[1/x]) = 0$.
\end{lemma}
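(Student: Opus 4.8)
The plan is to treat the two vanishing statements separately, deriving the first from the invertibility of multiplication by $x$ on $R[1/x]$ and the second by feeding the same defining triangle into $\hom^*_\Derived(-,R[1/x])$.

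For $R[1/x]\otimes_R K_x=0$, I would apply the exact functor $-\otimes_R R[1/x]$ to the triangle $K_x\to R\xrightarrow{x}\Sigma^{-|x|}R$ defining $K_x$, producing an exact triangle
\[ K_x\otimes_R R[1/x]\to R[1/x]\xrightarrow{\,x\,}\Sigma^{-|x|}R[1/x]. \]
Hence it suffices to show that multiplication by $x$ on $R[1/x]$ is an isomorphism. Since $R$ is compact, $\pi^*$ sends the coproduct in the telescope defining $R[1/x]$ to a direct sum, and being homological it therefore commutes with that homotopy colimit; so $\pi^*(R[1/x])\cong\mathrm{colim}\bigl(\pi^*R\xrightarrow{x}\Sigma^{-|x|}\pi^*R\xrightarrow{x}\cdots\bigr)=(\pi^*R)[1/x]$, on which $x$ acts invertibly. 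As $R$ is a compact generator of $\Derived$, a morphism that is an isomorphism on $\pi^*$ is an isomorphism, so $x\colon R[1/x]\to\Sigma^{-|x|}R[1/x]$ is an isomorphism and $K_x\otimes_R R[1/x]=0$. (Equivalently, one observes directly that this map is the shift self-equivalence of the telescope $R\xrightarrow{x}\Sigma^{-|x|}R\xrightarrow{x}\cdots$.)

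For $\hom^*_\Derived(K_x,R[1/x])=0$ I would apply the contravariant homological functor $\hom^*_\Derived(-,R[1/x])$ to the same triangle $K_x\to R\xrightarrow{x}\Sigma^{-|x|}R$. In the resulting long exact sequence the maps
\[ \hom^*_\Derived(\Sigma^{-|x|}R,R[1/x])\longrightarrow\hom^*_\Derived(R,R[1/x]) \]
are precomposition with $x$, i.e. the action of $x$; under the identification $\hom^*_\Derived(R,R[1/x])=\pi^*(R[1/x])=(\pi^*R)[1/x]$ (and its suspension) these are multiplication by $x$, hence isomorphisms. Therefore every middle term $\hom^*_\Derived(K_x,R[1/x])$ of the long exact sequence vanishes. (Alternatively, using $K_x\cong\Sigma^{|x|+1}R/x$ together with the identification $M/x\cong\Sigma^{|x|+1}\Hom_R(R/x,M)$ established in the proof of Lemma~\ref{lem: hom (A/X,-) is like A/X otimes}, one finds that $\Hom_R(K_x,R[1/x])$ is, up to suspension, $K_x\otimes_R R[1/x]=0$, so $\hom^*_\Derived(K_x,R[1/x])=\pi^*\Hom_R(K_x,R[1/x])=0$.)

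I do not expect a genuine obstacle here; the only steps requiring care are the identification $\pi^*(R[1/x])\cong(\pi^*R)[1/x]$, which rests on the compactness of $R$, and the standard fact that isomorphisms in $\Derived$ are detected on homotopy groups. Once multiplication by $x$ is known to be invertible on $R[1/x]$, both vanishing statements follow formally from the triangle defining $K_x$.
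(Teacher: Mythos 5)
Your proof is correct and takes essentially the same route as the paper: both arguments come down to the telescope computation on homotopy groups (the paper computes $\pi^*(R[1/x]\otimes_R K_x)$ as the colimit of $\pi^*K_x\xrightarrow{x}\Sigma^{-|x|}\pi^*K_x\to\cdots$, while you equivalently invert $x$ on $\pi^*(R[1/x])$ and feed this into the defining triangle), and for the second vanishing the paper simply invokes Lemma~\ref{lem: hom (A/X,-) is like A/X otimes}, which is exactly your parenthetical alternative to the long-exact-sequence argument. No gaps.
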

\begin{proof}
Clearly $R[1/x] \otimes_R K_x$ is the homotopy colimit of the telescope
\[ K_x\xrightarrow{x} \Sigma^{-|x|} K_x\xrightarrow{x} \Sigma^{-2|x|} K_x \to \cdots\]
Therefore $\pi^*(R[1/x] \otimes_R K_x)$ is the colimit of
\[ \pi^* K_x\xrightarrow{x} \Sigma^{-|x|} \pi^* K_x\xrightarrow{x} \Sigma^{-2|x|} \pi^* K_x \to \cdots\]
which is zero and so $R[1/x] \otimes_R K_x=0$. The assertion that $\hom_\Derived^*(K_x,R[1/x]) = 0$ now follows from  Lemma~\ref{lem: hom (A/X,-) is like A/X otimes}.
\end{proof}

\begin{lemma}
Let $x \in \pi^*R$, then $K_x$ and $K_x^\infty$ build each other.
\end{lemma}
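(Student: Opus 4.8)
The plan is to exploit the two defining exact triangles together with Lemma~\ref{lem: A[1/x] is Kx null}. Recall $K_x \to R \xrightarrow{x} \Sigma^{-|x|}R$ and $K_x^\infty \to R \to R[1/x]$. First I would show $K_x \builds K_x^\infty$. Tensoring the triangle defining $K_x^\infty$ with $K_x$ yields an exact triangle $K_x^\infty \otimes_R K_x \to K_x \to R[1/x]\otimes_R K_x$. By Lemma~\ref{lem: A[1/x] is Kx null}, $R[1/x]\otimes_R K_x = 0$, so the first map is an isomorphism: $K_x \cong K_x^\infty \otimes_R K_x$. Since $K_x$ builds $R$ (it sits in a triangle with two copies of $R$... wait, no — but $R$ builds everything, so in particular $R \builds K_x$), we get $K_x^\infty \otimes_R R \builds K_x^\infty \otimes_R K_x$, i.e. $K_x^\infty \builds K_x$; and symmetrically, from $K_x \cong K_x^\infty\otimes_R K_x$ and $K_x^\infty \builds R$ (which is false in general, so I must be more careful here).

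Let me reorganize. The clean statement is: tensoring is exact, so $X \builds Y$ implies $Z\otimes_R X \builds Z\otimes_R Y$ for any $Z$. For the direction $K_x^\infty \builds K_x$: from $K_x \cong K_x^\infty\otimes_R K_x$ and the fact that $R \builds K_x$, tensoring with $K_x^\infty$ gives $K_x^\infty = K_x^\infty\otimes_R R \builds K_x^\infty\otimes_R K_x = K_x$. For the direction $K_x \builds K_x^\infty$: I would instead run the symmetric argument using the triangle defining $K_x$. Tensoring $K_x \to R \xrightarrow{x} \Sigma^{-|x|}R$ with $K_x^\infty$ gives a triangle $K_x\otimes_R K_x^\infty \to K_x^\infty \xrightarrow{x} \Sigma^{-|x|}K_x^\infty$. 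But multiplication by $x$ on $K_x^\infty$ is an isomorphism — because $K_x^\infty$ is built from the triangle $K_x^\infty \to R \to R[1/x]$ in which $x$ acts invertibly on $R[1/x]$ (it is a homotopy colimit of a telescope of multiplications by $x$) and... actually $x$ is not invertible on $R$, so this needs a short argument. The correct observation: $K_x^\infty \otimes_R R[1/x] \cong R[1/x]\otimes_R R[1/x]$ on which $x$ is invertible, and $K_x^\infty\otimes_R K_x \cong K_x$ has $x$ acting as zero (Lemma~\ref{lem: Multiplication by divided element is zero}); combining via the triangle $K_x \to R \to R[1/x]$ tensored with $K_x^\infty$... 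Rather than chase this, the cleanest route is: $K_x\otimes_R K_x^\infty \cong K_x$ too, by the same Lemma~\ref{lem: A[1/x] is Kx null}-style computation applied to the $K_x^\infty$ side, or directly — tensor the two triangles. Then since $R \builds K_x^\infty$, tensoring with $K_x$ gives $K_x = K_x\otimes_R R \builds K_x \otimes_R K_x^\infty \cong K_x^\infty\otimes_R K_x \cong K_x$; that is circular, so instead I use $K_x\otimes_R K_x^\infty\cong K_x^\infty$.

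So the crux is the pair of isomorphisms $K_x^\infty\otimes_R K_x \cong K_x$ and $K_x^\infty \otimes_R K_x^\infty \cong K_x^\infty$, wait — what I actually want is $K_x \otimes_R K_x^\infty \cong K_x^\infty$? No. Let me state what is true and sufficient:
\begin{equation*}
K_x^\infty \otimes_R K_x \;\cong\; K_x .
\end{equation*}
Granting this, both buildings follow: tensoring $R \builds K_x$ with $K_x^\infty$ gives $K_x^\infty \builds K_x^\infty\otimes_R K_x \cong K_x$; and tensoring $R\builds K_x^\infty$ with $K_x$ gives $K_x \builds K_x\otimes_R K_x^\infty \cong K_x$ — still circular for the second direction. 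The honest fix for $K_x \builds K_x^\infty$: tensor the triangle $K_x^\infty \to R \to R[1/x]$ with $K_x$, obtaining $K_x\otimes_R K_x^\infty \to K_x \to K_x\otimes_R R[1/x]$; the last term is $0$ by Lemma~\ref{lem: A[1/x] is Kx null}, so $K_x\otimes_R K_x^\infty \cong K_x$ — this is the isomorphism above (using commutativity of $\otimes_R$). Then separately, tensor $K_x \to R\to\Sigma^{-|x|}R$ with $R[1/x]$: get $K_x\otimes_R R[1/x]\to R[1/x]\xrightarrow{x}\Sigma^{-|x|}R[1/x]$, and since $x$ is invertible on $R[1/x]$ and $K_x\otimes_R R[1/x]=0$, consistency holds. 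Finally, from the triangle $K_x^\infty\otimes_R K_x \to K_x^\infty \xrightarrow{x}\Sigma^{-|x|}K_x^\infty$ (tensoring $K_x\to R\xrightarrow{x}\Sigma^{-|x|}R$ with $K_x^\infty$) and $K_x^\infty\otimes_R K_x\cong K_x$, we see $K_x$ sits in a triangle $K_x \to K_x^\infty \to \Sigma^{-|x|}K_x^\infty$, which immediately gives $K_x^\infty \builds K_x$ (already known) and, rotating, $K_x \builds K_x^\infty$ because $K_x^\infty$ is the cone of a map between objects built by $K_x$ (namely $K_x \to K_x^\infty$ has cone $\Sigma^{-|x|}K_x^\infty$, so $K_x^\infty$ is... the fiber, hence built from $K_x$ and $K_x^\infty$ — still circular).

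The genuinely clean argument, which I would actually write, is: the triangle $K_x^\infty\otimes_R K_x \cong K_x \to K_x^\infty \xrightarrow{x}\Sigma^{-|x|}K_x^\infty$ shows that $x$ acts invertibly on $K_x^\infty$ modulo the subcategory $\loc_\Derived(K_x)$; more precisely $K_x^\infty$ is the homotopy colimit of $K_x^\infty \xrightarrow{x}\Sigma^{-|x|}K_x^\infty\to\cdots$? That is false. I expect the main obstacle to be pinning down the direction $K_x \builds K_x^\infty$ cleanly; the resolution in the literature (Greenlees–May, Dwyer–Greenlees) is that $K_x^\infty = \hocolim_n \Sigma^{n|x|+n}R/x^n$ with each $R/x^n \in \thick_\Derived(R/x) = \thick_\Derived(\Sigma^{-|x|-1}K_x)$ via the standard filtration of $R/x^n$ by copies of $R/x$; since each term of the telescope is finitely built by $K_x$ and homotopy colimits are built by their terms, $K_x\builds K_x^\infty$. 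I would present the proof in exactly that shape: (1) build the telescope $\{R/x^n\}$ with colimit $K_x^\infty$ up to suspension; (2) observe $R/x \fbuilds R/x^n$ for each $n$ by induction on the triangle $R/x^{n-1}\xrightarrow{x}... $ no — by the triangle $\Sigma^{?}R/x \to R/x^n \to R/x^{n-1}$; (3) conclude $K_x \builds K_x^\infty$; (4) for the reverse, use $K_x^\infty\otimes_R K_x\cong K_x$ from Lemma~\ref{lem: A[1/x] is Kx null} together with $R\builds K_x$ to get $K_x^\infty = K_x^\infty\otimes_R R \builds K_x^\infty\otimes_R K_x \cong K_x$.
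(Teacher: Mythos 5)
The final proof you arrive at---for $K_x^\infty\builds K_x$, use the isomorphism $K_x^\infty\otimes_R K_x\cong K_x$ from Lemma~\ref{lem: A[1/x] is Kx null} together with $R\builds K_x$; for $K_x\builds K_x^\infty$, realize $K_x^\infty$ as the homotopy colimit of a telescope of finite Koszul objects $K_{x^n}$ (equivalently $R/x^n$ up to suspension), each finitely built from $K_x$ by an inductive triangle argument---is exactly the paper's proof. The many flagged circularities along the way are correctly identified dead ends, and you landed on the honest route.
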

\begin{proof}
From Lemma~\ref{lem: A[1/x] is Kx null} we see there is an exact triangle
\[ K_x^\infty \otimes_R K_x \to  R \otimes_R K_x \to 0\]
Therefore $K_x$ is isomorphic to $K_x^\infty \otimes_R K_x$ which is built by $K_x^\infty$.

To show that $K_x^\infty$ is built by $K_x$ note that for every $n$ there is an exact triangle
\[K_x \to K_{x^{n+1}} \to K_{x^n}\]
An inductive argument then shows that $K_x$ builds $K_{x^n}$ for all $n$. It is easy to see that $K_x^\infty$ is the homotopy colimit of a telescope
\[ K_x \to K_{x^{2}} \to K_{x^3} \to \cdots\]
and therefore $K_x$ builds $K_x^\infty$.
\end{proof}

The next result is \cite[Proposition 6.1]{DwyerGreenlees} translated to our setting.

\begin{corollary}
Let $X=(x_1,...,x_n)$ be a sequence of elements in $\pi^*R$. Then $R/X$ and $K^\infty_X$ build each other.
\end{corollary}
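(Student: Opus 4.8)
The plan is to bootstrap from the single-variable case already established in the preceding lemma, using the fact that $K_X^\infty = K_{x_1}^\infty \otimes_R \cdots \otimes_R K_{x_n}^\infty$ and $R/X = \Sigma^n(K_{x_1} \otimes_R \cdots \otimes_R K_{x_n})$ are the tensor products of the corresponding one-variable objects, together with the observation that building relations are preserved by tensoring with a fixed object. Concretely, the key tool is the following easy fact, which I would either invoke implicitly or state as a one-line remark: if $A \builds B$ in $\Derived$, then $A \otimes_R C \builds B \otimes_R C$ for every $C \in \Derived$, because $\loc_\Derived(A \otimes_R C)$ is a localizing subcategory containing $A \otimes_R C$, and the collection of objects $D$ with $D \otimes_R C \in \loc_\Derived(A\otimes_R C)$ is localizing and contains $A$, hence contains all of $\loc_\Derived(A)$, in particular $B$.

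First I would prove that $K_X^\infty$ builds $R/X$. By the previous lemma, $K_{x_i}$ builds $K_{x_i}^\infty$ for each $i$; I claim the reverse, $K_{x_i}^\infty$ builds $K_{x_i}$, also holds — and indeed the previous lemma states they build each other. Using the tensoring fact repeatedly: $K_{x_1}^\infty$ builds $K_{x_1}$, so $K_{x_1}^\infty \otimes_R (K_{x_2}^\infty \otimes_R \cdots \otimes_R K_{x_n}^\infty)$ builds $K_{x_1} \otimes_R K_{x_2}^\infty \otimes_R \cdots \otimes_R K_{x_n}^\infty$; then replace $K_{x_2}^\infty$ by $K_{x_2}$ in the same way, and so on. After $n$ steps we conclude $K_X^\infty$ builds $K_{x_1} \otimes_R \cdots \otimes_R K_{x_n}^\infty = K_X$, hence builds $\Sigma^n K_X = R/X$. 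The converse direction, that $R/X$ builds $K_X^\infty$, is symmetric: starting from $K_X = \Sigma^{-n}R/X$ and using that each $K_{x_i}$ builds $K_{x_i}^\infty$, the same telescoping tensor argument gives $K_X \builds K_X^\infty$, hence $R/X \builds K_X^\infty$.

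The only point requiring a little care — and the one I would flag as the main (minor) obstacle — is making sure the tensoring-preserves-building lemma is cleanly in place, since it is used $n$ times in each direction and is the workhorse of the argument; everything else is bookkeeping with the definitions of $K_X$ and $K_X^\infty$ as iterated tensor products. Since this fact follows immediately from the tensor-ideal property of localizing subcategories in $\Derived$ (already noted in Section~\ref{sec: Stratification of a derived category}), there is really no obstacle of substance here, and the proof is short. An induction on $n$ packages the whole thing neatly: the base case $n=1$ is the previous lemma, and the inductive step writes $K_X^\infty = K_{x_1}^\infty \otimes_R K_{(x_2,\dots,x_n)}^\infty$ and $R/X = \Sigma R/x_1 \otimes_R R/(x_2,\dots,x_n)$, then combines the $n=1$ case with the inductive hypothesis via tensoring.
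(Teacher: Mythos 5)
Your proof is correct. The paper in fact gives no argument of its own for this corollary; it simply cites \cite[Proposition 6.1]{DwyerGreenlees} as the source (``translated to our setting''). Your induction on $n$, bootstrapping from the preceding one-variable lemma via the fact that $-\otimes_R C$ preserves building, is exactly the standard argument one would supply here, so you have essentially reconstructed the omitted proof. One small remark on your closing sentence: the statement ``$A\builds B$ implies $A\otimes_R C\builds B\otimes_R C$'' is not literally the tensor-ideal property (which says a localizing subcategory containing $X$ and $Y$ contains $X\otimes_R Y$); the correct justification is the one you actually gave first, namely that $-\otimes_R C$ is exact and coproduct-preserving, so $\{D : D\otimes_R C\in\loc_\Derived(A\otimes_R C)\}$ is a localizing subcategory containing $A$. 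That self-contained argument is what carries the proof, and it is fine.
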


It follows that for every $M \in \Derived$ the object $K_X^\infty \otimes_R M$ is built by $R/X$. In fact, one can show that $K_X^\infty \otimes_R -$ is the colocalization functor whose image is $\loc_\Derived(R/X)$. However we will not require this result (a similar result can be found in~\cite[Proof of Proposition 9.3]{DwyerGreenleesIyengar}). What we do need is the following characterization of objects built by $R/X$.

\begin{proposition}
\label{pro: Cellular A/X objects are power torsion}
Let $X=(x_1,...,x_n)$ be a sequence of elements in $\pi^*R$ and let $M$ be an object of $\Derived$. Then $M\in \loc_\Derived(R/X)$ if and only if $\pi^*M$ is $(X)$-power torsion.
\end{proposition}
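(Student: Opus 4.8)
The plan is to prove the two implications separately.

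\textbf{From membership in $\loc_\Derived(R/X)$ to power torsion.} I would introduce the full subcategory
\[ \mathcal{C} = \{\, N \in \Derived \ \mid \ \pi^*N \text{ is } (X)\text{-power torsion} \,\} \]
and show that it is a localizing subcategory of $\Derived$ containing $R/X$; the implication is then immediate. That $\mathcal{C}$ is localizing is pure module theory over $\pi^*R$: since $(X)$ is finitely generated, the $(X)$-power torsion modules are closed under submodules, quotients, extensions and arbitrary direct sums, and combining this with the long exact sequence in $\pi^*$ attached to a triangle, with the fact that $\pi^*$ commutes with coproducts (as $R$ is compact), and with the obvious stability under (de)suspension, one gets closure of $\mathcal{C}$ under triangles, retracts, suspensions and coproducts. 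For $R/X \in \mathcal{C}$ one runs an induction on $n$: writing $M' = R/(x_1,\dots,x_{n-1})$, the defining triangle $\Sigma^{-|x_n|}M' \xrightarrow{x_n} M' \to R/X$ exhibits $\pi^*(R/X)$ as an extension of a quotient of $\pi^*M'$ by a submodule of $\pi^*M'$, both additionally annihilated by $x_n$; as $\pi^*M'$ is $(x_1,\dots,x_{n-1})$-power torsion by induction (the base case $n=1$ being the same computation with $M'=R$), such an extension is $(X)$-power torsion.

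\textbf{From power torsion to membership in $\loc_\Derived(R/X)$.} Suppose $\pi^*M$ is $(X)$-power torsion; then in particular it is $x_i$-power torsion for each $i$. The key point is the following, valid for any $N \in \Derived$ and any $i$ with $\pi^*N$ being $x_i$-power torsion: since $\pi^*$ commutes with the homotopy colimit defining $R[1/x_i]$ one has $\pi^*(R[1/x_i]\otimes_R N) = (\pi^*N)[1/x_i] = 0$, hence $R[1/x_i]\otimes_R N = 0$ because $R$ generates $\Derived$; feeding this into the triangle $K_{x_i}^\infty \otimes_R N \to N \to R[1/x_i]\otimes_R N$ shows $K_{x_i}^\infty \otimes_R N \xrightarrow{\ \sim\ } N$. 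I then apply this with $i=n$ and $N=M$, then with $i=n-1$ and $N = K_{x_n}^\infty\otimes_R M$ (whose homotopy still equals that of $M$, hence is $x_{n-1}$-power torsion), and so on, stripping off one factor at a time; a short downward induction on the number of remaining factors yields $K_X^\infty \otimes_R M \cong M$. Since $K_X^\infty \otimes_R M$ is built by $R/X$ — recorded in the text, as $R/X$ and $K_X^\infty$ build each other — it follows that $M \in \loc_\Derived(R/X)$.

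\textbf{Expected difficulty.} This is essentially the classical Grothendieck-style local cohomology computation transported into $\Derived$, so I do not anticipate a genuine obstacle. The only point needing a little care is the bookkeeping in the second implication: it is cleanest to state the ``key point'' for an arbitrary object $N$, not for $M$ alone, so that the factors $K_{x_i}^\infty$ can be removed one by one without ever having to re-examine homotopy groups.
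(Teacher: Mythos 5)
Your proposal is correct and follows essentially the same route as the paper: one direction uses the triangles $K_{x_i}^\infty\otimes_R M \to M \to R[1/x_i]\otimes_R M$ and the vanishing of $R[1/x_i]\otimes_R M$ to conclude $K_X^\infty\otimes_R M\cong M$, hence $M\in\loc_\Derived(R/X)$; the other observes that the objects with $(X)$-power torsion homotopy form a localizing subcategory containing $R/X$. You merely spell out details the paper leaves implicit (the localizing-subcategory check, the induction putting $R/X$ in it, and the one-factor-at-a-time bookkeeping), which is fine.
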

\begin{proof}
Suppose that $\pi^*M$ is $(X)$-power torsion. Then for every $i=1,...,n$, the object $R[1/x_i] \otimes_R M$ is zero and therefore $K_{k_i}^\infty \otimes_R M \cong M$. We conclude that $K_X^\infty \otimes_R M \cong M$ which implies that $R/X$ builds $M$.

On the other hand, the full subcategory of objects whose homotopy is $(X)$-power torsion is a localizing subcategory which contains $R/X$. So if $M \in \loc_\Derived(R/X)$ then $\pi^*M$ is $(X)$-power torsion.
\end{proof}

\subsection*{Colocalization at an algebra}
Let $A$ be an object in $\Derived$ and let $\class$ be $\loc_\Derived(A)$. Denote by $\Gamma_\class$ the colocalization functor whose image is $\class$. We give a construction for the functor $\Gamma_\class$ in the case where $A$ is an $R$-algebra which is compact as an object in $\Derived$. We shall follow the definitions of~\cite{DwyerGreenlees}

\begin{definition}
A morphism $U \to V$ of objects in $\Derived$ is an \emph{$A$-equivalence} if the induced map $\hom_\Derived^*(A,U) \to \hom_\Derived^*(A,V)$ is an isomorphism. An object $N$ is \emph{$A$-null} if $\hom_\Derived^*(A,N)= 0$. An object $M$ is \emph{$A$-colocal} if, for every $A$-null object $N$, $\hom_\Derived^*(M,N) = 0$. If $C \to M$ is an $A$-equivalence and $C$ is $A$-colocal then $C$ is called an \emph{$A$-colocalization of $M$}. It is easy to see that an $A$-colocalization of an object $M$ is unique up to isomorphism.
\end{definition}

It is also easy to see that $\class$ is contained in the class of $A$-colocal objects. For every object $M$ the morphism $\Gamma_\class M \to M$ is clearly an $A$-equivalence. Since an $A$-equivalence between $A$-colocal objects is an isomorphism, we see that every $A$-colocal object $C$ is isomorphic to $\Gamma_\class C$. It easily follows that the class of $A$-colocal objects is $\class$ and that the colocalization $\Gamma_\class M \to M$ is $A$-colocalization.

\begin{definition}
\label{def: R mod I tensor n}
Let $A$ be an $R$-algebra with a unit map $u:R \to A$ and let $I$ be defined by the exact triangle
\[ I \to R \to A\]
Denote by $I^{\otimes n}$ the tensor product
\[ \underbrace{I \otimes_R I \cdots \otimes_R I}_{n \text{ times}} \]
Taking the tensor product of $u$ with itself $n$ times yields a morphism $I^{\otimes n} \to R$. Let $R/I^{\otimes n}$ be defined by the exact triangle
\[ I^{\otimes n} \to R \to R/I^{\otimes n}\]
In particular, $R/I \cong A$.
\end{definition}

The objects $R/I^{\otimes n}$ were defined by Lazarev in~\cite{LazarevAinftyRingSpectra} and shown to be $R$-algebras. Moreover, Lazarev shows there are maps of $R$-algebras
\[ R/I \xleftarrow{a} R/I^{\otimes 2} \leftarrow \cdots \leftarrow R/I^{\otimes (n-1)} \xleftarrow{a} R/I^{\otimes n} \leftarrow \cdots\]
compatible with the unit maps $u_n:R \to R/I^{\otimes n}$. The resulting tower was called a \emph{generalized Adams resolution} in~\cite[Theorem 7.1]{LazarevAinftyRingSpectra}. We shall only need the following map of exact triangles, note that the rightmost column is also exact
\[ \xymatrixcompile{
& & {I^{\otimes n} \otimes_R A} \ar[d] \\
{I^{\otimes(n+1)}} \ar[r] \ar[d] & {R} \ar[r] \ar[d]^= & {R/I^{\otimes(n+1)}} \ar[d]^a \\
{I^{\otimes n}} \ar[r] & {R} \ar[r] & {R/I^{\otimes n}}
}\]

\begin{theorem}
\label{thm: Colocalization as hom R/Itensorn}
Suppose $A$ is an $R$-algebra that is compact as an object in $\Derived$. Then for every object $M$ in $\Derived$ the natural morphism
\[ \hocolim_n \Hom_R(R/I^{\otimes n},M) \to M\]
is an $A$-colocalization.
\end{theorem}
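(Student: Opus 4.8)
The plan is to show that the homotopy colimit $C := \hocolim_n \Hom_R(R/I^{\otimes n},M)$ satisfies the two defining properties of an $A$-colocalization: that $C$ is $A$-colocal and that the natural map $C \to M$ is an $A$-equivalence. Since we observed above that $A$-colocal objects are precisely the objects of $\class = \loc_\Derived(A)$, the first property amounts to showing $C \in \loc_\Derived(A)$, and the colocalization $\Gamma_\class M \to M$ is then $A$-colocalization by uniqueness. So there are two things to check.

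First I would show each $\Hom_R(R/I^{\otimes n},M)$ lies in $\loc_\Derived(A)$, so that the homotopy colimit $C$ does too (a localizing subcategory is closed under the coproducts and cofibres defining a homotopy colimit). For this, observe that $R/I = A$ generates $R/I^{\otimes n}$: from the map of exact triangles displayed just before the theorem, the cofibre of $a: R/I^{\otimes(n+1)} \to R/I^{\otimes n}$ is $\Sigma(I^{\otimes n}\otimes_R A)$, and $I^{\otimes n}\otimes_R A$ is in $\loc_\Derived(A)$ since $\class$ is a tensor-ideal localizing subcategory (it is closed under $-\otimes_R -$ with arbitrary objects, by the same argument as for thick/localizing tensor ideals). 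An induction on $n$, starting from $R/I^{\otimes 1} = A$, then gives $R/I^{\otimes n} \in \loc_\Derived(A)$ for all $n$; since $A$ is compact, $\Hom_R(A,-)$ — and more generally $\Hom_R(R/I^{\otimes n},-)$ — preserves $\loc_\Derived(A)$ (again using compactness of $A$ to see that $\Hom_R$ of a compact object preserves coproducts, and using the triangulated structure), so each $\Hom_R(R/I^{\otimes n},M) \in \loc_\Derived(A)$ and hence $C \in \loc_\Derived(A)$.

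Second I would verify that $C \to M$ is an $A$-equivalence, i.e. that $\hom^*_\Derived(A, C) \to \hom^*_\Derived(A, M)$ is an isomorphism. Using $\pi^*\Hom_R(Y,Z) \cong \hom^*_\Derived(Y,Z)$ and the compactness of $A$, we have
\[
\hom^*_\Derived(A, C) \cong \pi^*\big(\Hom_R(A,\hocolim_n \Hom_R(R/I^{\otimes n},M))\big) \cong \operatorname*{colim}_n \hom^*_\Derived\big(A,\Hom_R(R/I^{\otimes n},M)\big),
\]
and the adjunction gives $\hom^*_\Derived(A,\Hom_R(R/I^{\otimes n},M)) \cong \hom^*_\Derived(A\otimes_R R/I^{\otimes n}, M)$. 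So the claim reduces to: the canonical map $\operatorname*{colim}_n \hom^*_\Derived(A\otimes_R R/I^{\otimes n}, M) \to \hom^*_\Derived(A, M)$ induced by $u_n\otimes 1_A: A \cong R\otimes_R A \to R/I^{\otimes n}\otimes_R A$ is an isomorphism. I expect this to follow from the fact that the maps $A \otimes_R R/I^{\otimes n} \to A$ (and the tower among them) are eventually split, or more precisely that the map $A\otimes_R u_n$ becomes, after applying $\hom^*_\Derived(-,M)$, a pro-isomorphism onto $\hom^*_\Derived(A,M)$. Concretely: tensoring the triangle $I^{\otimes n}\to R\to R/I^{\otimes n}$ with $A$ and using that $I\otimes_R A \to A$ admits a section because $A$ is an $R$-algebra (the unit $R\to A$ gives a retraction $A = R\otimes_R A \to A\otimes_R A \to A$ onto which $I\otimes_R A$ splits off), one finds that $I^{\otimes n}\otimes_R A$ is a retract of $I^{\otimes n}$-type pieces that, after smashing, vanish in the colimit; equivalently the pro-object $\{\hom^*_\Derived(I^{\otimes n}\otimes_R A, \Sigma^* M)\}_n$ is pro-zero.

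The main obstacle is this last step — proving the pro-vanishing of $\{I^{\otimes n}\otimes_R A\}_n$ against $M$, i.e. that the transition maps $\hom^*_\Derived(I^{\otimes n}\otimes_R A, M) \leftarrow \hom^*_\Derived(I^{\otimes(n+1)}\otimes_R A, M)$ are eventually zero. The clean way to see it is that $I\otimes_R A$ is a retract of $A\otimes_R A$ via the splitting induced by the $R$-algebra unit of $A$, hence $I^{\otimes n}\otimes_R A$ is a retract of an $n$-fold smash power that, after one more smash with $A$, is killed; chasing the Adams-resolution tower then forces the transition maps on $\hom^*_\Derived(-,M)$ to be eventually zero, giving the Milnor-sequence / colimit computation. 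Everything else is a routine assembly of the triangulated and adjunction formalism recalled in Section~\ref{sec: Stratification of a derived category}.
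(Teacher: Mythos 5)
Your two-step plan (show $C=\hocolim_n\Hom_R(R/I^{\otimes n},M)$ lies in $\loc_\Derived(A)$, and that $C\to M$ is an $A$-equivalence) is a genuinely different route from the paper's: the paper instead proves that $P_A(M)=\hocolim_n\Hom_R(I^{\otimes n},M)$ computes the localization $L_\class M$ (by checking $P_A(N)\cong N$ for $A$-null $N$, that $P_A(M)$ is always $A$-null, and that $P_A$ vanishes on $\loc_\Derived(A)$ via a localizing-subcategory argument that reduces everything to the single computation $P_A(A)\cong 0$, where the multiplication of $A$ is used with $A$ in the \emph{target}), and then reads the theorem off the triangle $C\to M\to P_A(M)$, so colocality of $C$ comes for free. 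Your route can be made to work, but as written both halves have genuine gaps.

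First, the claim that $\Hom_R(R/I^{\otimes n},M)\in\loc_\Derived(A)$ for an \emph{arbitrary} $M$ does not follow from what you say: $M$ need not lie in $\loc_\Derived(A)$, so ``$\Hom_R(R/I^{\otimes n},-)$ preserves $\loc_\Derived(A)$'' is beside the point, and compactness plus preservation of coproducts only shows that the class of $M$ for which the claim holds is localizing, reducing you to the non-formal statement $\Hom_R(R/I^{\otimes n},R)\in\loc_\Derived(A)$. The missing ingredient is multiplicative: by Lazarev's result quoted before the theorem, $R/I^{\otimes n}$ is an $R$-algebra, so $\Hom_R(R/I^{\otimes n},M)$ is an $R/I^{\otimes n}$-module and hence, as an $R$-module, is built by $R/I^{\otimes n}$, which by your induction (Lemma~\ref{lem: A builds R/I tensor n}) is built by $A$. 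Second, the vanishing you yourself flag as ``the main obstacle'' is asserted rather than proved, and the mechanism you cite is misstated: $I\otimes_R A\to A$ does not admit a section --- it is the \emph{zero} morphism. Indeed, writing $i:I\to R$ and $u:R\to A$, the composite $m(u\otimes 1_A)$ is the unit isomorphism $R\otimes_R A\to A$, so $u\otimes 1_A$ is a split monomorphism, and since $(u\otimes 1_A)(i\otimes 1_A)=(ui)\otimes 1_A=0$ we get $i\otimes 1_A=0$. Consequently every transition map $I^{\otimes(n+1)}\otimes_R A\to I^{\otimes n}\otimes_R A$ is, up to coherence isomorphisms, $1_{I^{\otimes n}}\otimes(i\otimes 1_A)=0$, so $\mathrm{colim}_n\,\hom^*_\Derived(I^{\otimes n}\otimes_R A,M)=0$; combined with compactness of $A$, the adjunction, and the triangles $I^{\otimes n}\otimes_R A\to A\to R/I^{\otimes n}\otimes_R A$, this gives that $\hom^*_\Derived(A,C)\to\hom^*_\Derived(A,M)$ is an isomorphism. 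With these two repairs your argument goes through; without them, the crux of both halves is missing.
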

\begin{proof}
Let $P_A(M) = \hocolim_n \Hom_R(I^{\otimes n},M)$, note there is a morphism $M \to P_A(M)$ induced by the maps $I^{\otimes n} \to R$. We first show that if $N$ is $A$-null then $N \to P_A(N)$ is an isomorphism. Since $N$ is $A$-null then
\[ \Hom_R(I^{\otimes n} \otimes_R A,N) \cong \Hom_R(I^{\otimes n},\Hom_R(A,N)) \cong 0\]
Hence the morphisms $\Hom_R(I^{\otimes n},N) \to \Hom_R(I^{\otimes (n+1)},N)$ are isomorphisms and so $N \to P_A(N)$ is also an isomorphism.

Next we show that for any $M$ the object $P_A(M)$ is $A$-null. Since $A$ is compact any morphism $g:\Sigma^i A \to P_A(M)$ splits through $\Hom_R(I^{\otimes n}, M)$ for some $n$. Consider the exact triangle
\[ \Hom_R(A,\Hom_R(I^{\otimes n}, M)) \to \Hom_R(I^{\otimes n}, M) \to \Hom_R(I^{\otimes (n+1)}, M)\]
Clearly any morphism $f:\Sigma^i A \to \Hom_R(I^{\otimes n}, M)$ splits through $\Hom_R(A,\Hom_R(I^{\otimes n}, M))$. This implies that the composition
\[ \Sigma^i A \xrightarrow{f} \Hom_R(I^{\otimes n}, M) \to \Hom_R(I^{\otimes (n+1)}, M)\]
must be zero. In particular $g=0$ and therefore $P_A(M)$ is $A$-null.

Since $A$ is compact then so is $I$ and therefore also $I^{\otimes n}$ is compact for all $n$. From this it follows that the class of modules $M$ for which $P_A(M)\cong 0$ is a localizing class. Denote this localizing class by $\bl$, we shall next show that $A \in \bl$.

Recall that $\pi^*(\Hom_R(I^{\otimes n},A))$ is isomorphic to $\hom^*_\Derived(I^{\otimes n},A)$. Given a morphism $f:\Sigma^i I^{\otimes n} \to A$ it is easy to see that $f$ equals to the composition of the morphism  $1\otimes u:I^{\otimes n} \to I^{\otimes n} \otimes_R A$ with the morphism $m(f\otimes 1): \Sigma^i I^{\otimes n} \otimes_R A \to A$ where $m:A \otimes_R A \to A$ is the multiplication on $A$. The upshot of this is that $\hom^*_\Derived(I^{\otimes n}\otimes_R A,A) \to \hom^*_\Derived(I^{\otimes n},A)$ is a surjection and therefore the map
\[ \pi^*(\Hom_R(I^{\otimes n},A)) \to \pi^*(\Hom_R(I^{\otimes (n+1)},A)) \]
is the zero map. We conclude that $P_A(A)\cong 0$ and hence $\loc_\Derived(A) \subset \bl$.

In what follows we denote $\loc_\Derived(A)$ by $\class$. Let $L_\class$ be the localization functor such that $\Ker L_\class=\class$ and let $\Gamma_\class$ be the corresponding colocalization functor. For any object $M$ we have shown that $P_A(\Gamma_\class M)\cong 0$. By applying $P_A$ to the exact triangle $\Gamma_\class M \to M \to L_\class M$ we see that $L_\class M \cong P_A(M)$. Since there is an exact triangle
\[ \hocolim_n \Hom_R(R/I^{\otimes n},M) \to M \to \hocolim_n \Hom_R(I^{\otimes n},M)\]
the proof is complete.
\end{proof}

Our goal in giving Theorem~\ref{thm: Colocalization as hom R/Itensorn} is the following corollary.

\begin{corollary}
\label{cor: Retract of R/Iotimesn module}
Suppose $A$ is an $R$-algebra which is compact as an object in $\Derived$ and let $M$ be a compact object built by $A$. Then $M$ is a retract of $\Hom_R(R/I^{\otimes n},M)$ for some $n$.
\end{corollary}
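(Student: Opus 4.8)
The plan is to deduce this directly from Theorem~\ref{thm: Colocalization as hom R/Itensorn} using compactness of $M$. Since $M$ is built by $A$, the object $M$ lies in $\class = \loc_\Derived(A)$, so $M$ is $A$-colocal and the natural morphism $\Gamma_\class M \to M$ is an isomorphism. By Theorem~\ref{thm: Colocalization as hom R/Itensorn}, this colocalization is modelled by $\hocolim_n \Hom_R(R/I^{\otimes n},M) \to M$, so this map is an isomorphism.

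Now I would invoke the homotopy colimit lemma (the ``very well known'' lemma in Section~\ref{sec: Stratification of a derived category}): since $M$ is compact, the identity morphism $1_M \colon M \to M \cong \hocolim_n \Hom_R(R/I^{\otimes n},M)$ factors through the canonical map $\psi_i \colon \Hom_R(R/I^{\otimes i},M) \to \hocolim_n \Hom_R(R/I^{\otimes n},M)$ for some $i \gg 0$. Composing with the isomorphism to $M$, we obtain a morphism $\sigma \colon M \to \Hom_R(R/I^{\otimes i},M)$ together with a morphism $\rho \colon \Hom_R(R/I^{\otimes i},M) \to M$ (namely the composite of $\psi_i$ with the structure map $\hocolim_n \Hom_R(R/I^{\otimes n},M) \to M$, which is an iso here) such that $\rho\sigma = 1_M$. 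This exhibits $M$ as a retract of $\Hom_R(R/I^{\otimes i},M)$, which is exactly the claim.

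The only point requiring a little care is to make sure the splitting is through the \emph{correct} map: one needs that the composite $\Hom_R(R/I^{\otimes i},M) \xrightarrow{\psi_i} \hocolim_n \Hom_R(R/I^{\otimes n},M) \xrightarrow{\sim} M$ is the one relative to which $\sigma$ is a section, and that it agrees with the natural evaluation-type morphism $\Hom_R(R/I^{\otimes i},M) \to M$ induced by the unit $R \to R/I^{\otimes i}$ (this is what ``retract of $\Hom_R(R/I^{\otimes n},M)$'' should mean). This is immediate from the compatibility $\psi_i = \varphi_i \psi_{i+1}$ of the telescope maps together with the fact that, in the telescope defining $\hocolim_n \Hom_R(R/I^{\otimes n},M)$, each $\Hom_R(R/I^{\otimes n},M) \to M$ is compatible with the transition maps $a$. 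So I do not expect any genuine obstacle here; the corollary is essentially a formal consequence of Theorem~\ref{thm: Colocalization as hom R/Itensorn} plus compactness, and the proof is short.
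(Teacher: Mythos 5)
Your proof is correct and follows exactly the paper's argument: use Theorem~\ref{thm: Colocalization as hom R/Itensorn} to identify $M$ with $\hocolim_n \Hom_R(R/I^{\otimes n},M)$, then use compactness of $M$ to split the inverse isomorphism through a finite stage. The extra care you take about which retraction map realizes the splitting is not needed for the statement as written and is not discussed in the paper.
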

\begin{proof}
The obvious morphism $\hocolim_n \Hom_R(R/I^{\otimes n},M) \to M$ is an isomorphism. Since $M$ is compact, its inverse $M \to \hocolim_n \Hom_R(R/I^{\otimes n},M)$ splits through $\Hom_R(R/I^{\otimes \nu},M)$ for some $\nu$. Hence $M$ is a retract of $\Hom_R(R/I^{\otimes \nu},M)$.
\end{proof}

The next two lemmas are needed for Section~\ref{sec: Finitely fibred spaces over B} where we discuss a topological aspect of stratification.

\begin{lemma}
\label{lem: A builds R/I tensor n}
For every $n$ the objects $A$ and $R/I^{\otimes n}$ build each other.
\end{lemma}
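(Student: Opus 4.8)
The plan is to establish the two containments $A \builds R/I^{\otimes n}$ and $R/I^{\otimes n} \builds A$ separately. The first I would prove by induction on $n$, and the second by a direct retraction argument using the generalized Adams resolution.

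For $A \builds R/I^{\otimes n}$, the base case $n=1$ is immediate since $R/I^{\otimes 1} = R/I \cong A$. For the inductive step I would invoke the exact triangle given by the rightmost column of the $3\times 3$ diagram preceding Theorem~\ref{thm: Colocalization as hom R/Itensorn}, namely
\[ I^{\otimes n}\otimes_R A \to R/I^{\otimes(n+1)} \xrightarrow{a} R/I^{\otimes n}. \]
The object $I^{\otimes n}\otimes_R A$ lies in $\loc_\Derived(A)$: the full subcategory of those $Y \in \Derived$ with $Y\otimes_R A \in \loc_\Derived(A)$ is a localizing subcategory containing $R$, hence is all of $\Derived$. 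Combining this with the inductive hypothesis $R/I^{\otimes n}\in\loc_\Derived(A)$, the exact triangle above forces $R/I^{\otimes(n+1)}\in\loc_\Derived(A)$, i.e.\ $A \builds R/I^{\otimes(n+1)}$.

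For $R/I^{\otimes n}\builds A$, the point is to produce a morphism going the ``wrong'' way. Composing the maps $a$ of Lazarev's generalized Adams resolution yields a morphism of $R$-algebras $q\colon R/I^{\otimes n}\to R/I\cong A$ that is compatible with the unit maps, so $q u_n = u$. Using this, I would exhibit $A$ as a retract of $A\otimes_R R/I^{\otimes n}$ via the morphisms $1_A\otimes u_n\colon A \cong A\otimes_R R \to A\otimes_R R/I^{\otimes n}$ and $m(1_A\otimes q)\colon A\otimes_R R/I^{\otimes n}\to A\otimes_R A\to A$; their composite is $m(1_A\otimes u)$, which is the identity of $A$ by the unit axiom for $A$. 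Since $A\otimes_R R/I^{\otimes n}\in\loc_\Derived(R/I^{\otimes n})$ — again because the objects $Y$ with $Y\otimes_R R/I^{\otimes n}\in\loc_\Derived(R/I^{\otimes n})$ form a localizing subcategory containing $R$, hence all of $\Derived$ — it follows that $R/I^{\otimes n}\builds A$.

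I expect the only genuine subtlety to lie in the second containment: one needs a morphism $R/I^{\otimes n}\to A$ (which the Adams resolution supplies, whereas no algebra map $A\to R/I^{\otimes n}$ is available), and one must track the unit identifications carefully to see that the composite retraction is indeed $1_A$. Everything else is routine triangulated-category bookkeeping, and in particular no compactness hypothesis on $A$ is required for this lemma.
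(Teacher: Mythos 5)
Your proof is correct and follows essentially the same route as the paper: the exact triangles $I^{\otimes n}\otimes_R A \to R/I^{\otimes(n+1)} \to R/I^{\otimes n}$ give $A \builds R/I^{\otimes n}$ by induction, and Lazarev's algebra maps $R/I^{\otimes n}\to A$ give the reverse direction. The only cosmetic difference is that you make the second direction explicit by exhibiting $A$ as a retract of $A\otimes_R R/I^{\otimes n}$, where the paper simply invokes the resulting module structure of $A$ over $R/I^{\otimes n}$; both rest on the same input and are equally valid.
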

\begin{proof}
The maps of $R$-algebras $R/I^{\otimes n} \to A$ show that $R/I^{\otimes n}$ builds $A$ for every $n$. On the other hand, an inductive argument using the exact triangles
\[ I^{\otimes n} \otimes_R A \to R/I^{\otimes (n+1)} \to R/I^{\otimes n} \]
shows $A$ builds $R/I^{\otimes n}$ for every $n$.
\end{proof}

\begin{lemma}
\label{lem: Retract of R/Itensorn when cellular}
Suppose $A$ is an $R$-algebra which is compact as an object in $\Derived$ and let $M$ be a compact $R$-module built by $A$. Then for $n>>0$ the morphism
\[u_n \otimes 1_M: M \to R/I^{\otimes n} \otimes_R M\]
yields an injection on $\pi^*$.
\end{lemma}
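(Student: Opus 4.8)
The plan is to deduce this from Corollary~\ref{cor: Retract of R/Iotimesn module} by dualizing. First I would note that since $M$ is a compact $R$-module, it lies in $\Derived^\compact = \thick_\Derived(R)$, so applying $\Hom_R(-,R)$ gives a well-behaved dual. Actually, the cleanest route avoids explicit dualization: by Corollary~\ref{cor: Retract of R/Iotimesn module}, $M$ is a retract of $\Hom_R(R/I^{\otimes \nu},M)$ for some $\nu$; since the maps in the generalized Adams resolution are compatible and $M$ is compact, for all $n \geq \nu$ the composite $M \to \hocolim_n \Hom_R(R/I^{\otimes n},M) \to \Hom_R(R/I^{\otimes n},M)$ continues to split, so $M$ is a retract of $\Hom_R(R/I^{\otimes n},M)$ for all $n \gg 0$. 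Hence $\pi^* M$ is a direct summand of $\pi^*\Hom_R(R/I^{\otimes n},M) \cong \hom^*_\Derived(R/I^{\otimes n},M)$.

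Next I would relate the target of $u_n \otimes 1_M$ to this $\hom$-group. The key adjunction (tensor--cotensor) gives $\hom^*_\Derived(R/I^{\otimes n} \otimes_R M', M) \cong \hom^*_\Derived(M', \Hom_R(R/I^{\otimes n},M))$ naturally, but more to the point: the morphism $u_n \otimes 1_M: M \to R/I^{\otimes n}\otimes_R M$ and the morphism $M \to \Hom_R(R/I^{\otimes n},M)$ obtained by adjunction from the multiplication/action fit together. Concretely, I would use that $R/I^{\otimes n}$ is an $R$-algebra, so there is an "action" morphism making $\Hom_R(R/I^{\otimes n}, M)$ receive a map from $M$, and dually $R/I^{\otimes n}\otimes_R M$ receives $u_n\otimes 1$. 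The cleanest formal statement is: the map induced by $u_n$ on $\pi^*$, namely $\pi^* M \to \pi^*(R/I^{\otimes n}\otimes_R M)$, is injective if and only if the dual map $\hom^*_\Derived(R/I^{\otimes n}\otimes_R M, R) \to \hom^*_\Derived(M,R)$... but that introduces a spurious finiteness hypothesis on $\pi^*M$.

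So instead I would argue directly. Since $M$ is compact, $M$ is a retract of $\Hom_R(R/I^{\otimes n},M)$ for $n \gg 0$, with splitting $r: \Hom_R(R/I^{\otimes n},M) \to M$ such that $r\circ j = 1_M$ where $j$ is the natural map $M \to \Hom_R(R/I^{\otimes n},M)$ coming from $R\to R/I^{\otimes n}$. Now observe that $j$ factors as $M \xrightarrow{\simeq} \Hom_R(R,M) \xrightarrow{\Hom_R(u_n,M)} \Hom_R(R/I^{\otimes n},M)$, and there is a natural evaluation-type morphism $\epsilon: R/I^{\otimes n}\otimes_R M \to$ ... — this is getting complicated. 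The simplest correct argument: apply $\Hom_R(R/I^{\otimes n}, -)$ is not needed; instead use that $\pi^*$ of $u_n\otimes 1_M$ being injective is equivalent to saying that the fiber $I^{\otimes n}\otimes_R M$ of $u_n\otimes 1_M$ has the property that $\pi^*(I^{\otimes n}\otimes_R M) \to \pi^* M$ is zero. I would prove this last statement by the same trick used in the proof of Theorem~\ref{thm: Colocalization as hom R/Itensorn}: a morphism $\Sigma^i R \to I^{\otimes n}\otimes_R M$, i.e. an element of $\pi^*(I^{\otimes n}\otimes_R M)$, when composed into $M$... but $R$ is not $A$-compact-related here.

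Let me restate the plan more carefully. I would dualize using compactness. Since $M \in \Derived^\compact$, the functor $\Hom_R(-, M)$ applied to the generalized Adams tower yields exactly the tower in Theorem~\ref{thm: Colocalization as hom R/Itensorn}, but I want the $M$ (not $R$) coefficients in the other variable. The correct and economical approach: set $DM = \Hom_R(M,R)$, the Spanier--Whitehead dual, which is compact since $M$ is, and satisfies $DDM \cong M$ and $\pi^*(N\otimes_R M) \cong \hom^*_\Derived(DM, N)$ naturally in $N$ for $N \in \Derived$ (using compactness of $M$). Applying this with $N = R$ and $N = R/I^{\otimes n}$, the map $u_n\otimes 1_M$ on $\pi^*$ becomes the map $\hom^*_\Derived(DM, R) \to \hom^*_\Derived(DM, R/I^{\otimes n})$ induced by $u_n$. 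Since $M$ is built by $A$, so is $DM$ (duality preserves thick subcategories, and $DA$ builds... hmm, need $A$ self-dual-ish) — actually by Lemma~\ref{lem: A builds R/I tensor n}, $A$ builds $R/I^{\otimes n}$ and conversely, and $DM \in \thick_\Derived(A)$ because $M$ is and $D$ is a contravariant autoequivalence of $\Derived^\compact$ sending $\thick_\Derived(A)$ to $\thick_\Derived(DA) = \thick_\Derived(A)$ provided $DA$ and $A$ build each other. I would verify $DA$ builds $A$: from $I\to R\to A$ one gets $DA \to R \to DI$, and one checks the relevant finiteness. The main obstacle is precisely this bookkeeping — matching the map $u_n\otimes 1_M$ with the tower map from Theorem~\ref{thm: Colocalization as hom R/Itensorn} under Spanier--Whitehead duality, and confirming $DM$ is still built by $A$ so that Corollary~\ref{cor: Retract of R/Iotimesn module} (or rather its statement that $\hom^*_\Derived(R/I^{\otimes n}, DM) \to \hom^*_\Derived(R, DM)$... ) applies. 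Once that identification is in place, injectivity of $u_n\otimes 1_M$ on $\pi^*$ for $n\gg 0$ follows because $DM$, being compact and built by $A$, is a retract of $\Hom_R(R/I^{\otimes n}, DM)$, which forces the relevant map to split and hence be injective on homotopy. I expect the duality bookkeeping to be the only real content; everything else is formal manipulation of the adjunction $\hom_\Derived(X\otimes_R Y,Z)\cong \hom_\Derived(X,\Hom_R(Y,Z))$ together with the homotopy-colimit splitting lemma for compact objects.
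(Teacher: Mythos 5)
Your route through Spanier--Whitehead duality is genuinely different from the paper's, which works directly: the paper observes that $N=\Hom_R(R/I^{\otimes n},M)$ carries an $R/I^{\otimes n}$-module structure, so that $u_n\otimes 1_N$ is a split monomorphism, and a short diagram chase along the retraction $M\hookrightarrow N$ supplied by Corollary~\ref{cor: Retract of R/Iotimesn module} then exhibits a left inverse for $u_n\otimes 1_M$. No duality appears, and Corollary~\ref{cor: Retract of R/Iotimesn module} is invoked only for $M$ itself.

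Your version can be completed, but as written it leaves two real gaps, both of which you flag yourself. First, to apply Corollary~\ref{cor: Retract of R/Iotimesn module} to $DM$ you need $DM\in\thick_\Derived(A)$, i.e.\ $\thick_\Derived(DA)=\thick_\Derived(A)$. Your sketch ``from $I\to R\to A$ one gets $DA\to R\to DI$, and one checks the relevant finiteness'' is not a proof and does not lead toward one. The correct reason is that any thick subcategory of $\Derived^\compact$ is automatically a tensor ideal (because $R$ generates $\Derived^\compact$), and the triangle identity for the duality exhibits $A$ as a retract of $A\otimes_R DA\otimes_R A\in\thick_\Derived(DA)$, giving $A\in\thick_\Derived(DA)$; the reverse inclusion follows by applying $D$. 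Second, the ``bookkeeping'' you defer must actually be carried out: one must identify the split epimorphism $\Hom_R(u_n,DM)\colon\Hom_R(R/I^{\otimes n},DM)\to DM$ produced by Corollary~\ref{cor: Retract of R/Iotimesn module} with $D(u_n\otimes 1_M)$ under the natural isomorphism $\Hom_R(R/I^{\otimes n},\Hom_R(M,R))\cong\Hom_R(R/I^{\otimes n}\otimes_R M,R)$, and then dualize once more, using $DD\cong\mathrm{id}$ on compacts, to conclude that $u_n\otimes 1_M$ is a split monomorphism. Until both steps are filled in the argument is incomplete; the paper's direct diagram chase sidesteps both, and in particular does not need the fact $\thick_\Derived(DA)=\thick_\Derived(A)$ at all.
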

\begin{proof}
By Corollary~\ref{cor: Retract of R/Iotimesn module} the object $M$ is a retract of $N=\Hom_R(R/I^{\otimes n},M)$ for $n>>0$. Since $R/I^{\otimes n}$ is an $R$-algebra we see that $M$ is a retract of the $R/I^{\otimes n}$-module $N$. Let $f:M \to N$ and $g:N \to M$ be morphisms in $\Derived$ such that $gf=1_M$.

Let $A_n$ denote the $R$-algebra $R/I^{\otimes n}$. Since $N$ is an $A_n$-module, standard adjunctions show there exists a morphism $h:A_n \otimes_R N \to N$ such that $h(u_n \otimes 1_N) =1_N$. To explain this further, there is an adjunction
\[ \hom_\Derived(N,N) \cong \hom_{\Derived(A_n)}(A_n \otimes_R N,N)\]
and we take $h$ to be the morphism adjoint to $1_N$. Using the forgetful functor we can view $h$ as a morphism in $\Derived$ and since $u_n \otimes 1$ is the unit for this adjunction we see that $h(u_n \otimes 1_N) =1_N$.

Consider the following commutative diagram
\[ \xymatrixcompile{
{M} \ar[rr]^-{u_n \otimes 1_M} \ar@/^/[d]^f & & {A_n \otimes_R M} \ar@/^/[d]^{1_{A_n} \otimes f} \\
{N} \ar@/^/[rr]^-{u_n \otimes 1_N} \ar@/^/[u]^g & & {A_n \otimes_R M} \ar@/^/[u]^{1_{A_n} \otimes g} \ar@/^/[ll]^-h } \]
Diagram chasing reveals that the morphism $(u_n \otimes 1_N) f=(1_{A_n} \otimes g)(u_n \otimes 1_M)$ has a left inverse. Hence $\pi^*(u_n \otimes 1_M)$ is an injection.
\end{proof}

\section{Spaces and cochains}
\label{sec: Spaces and cochains}

Before starting to work with spaces and their cochain algebras we need to establish definitions and notation. We follow conventions from~\cite{DwyerGreenleesIyengar}.

\subsection*{Cochains}
Recall that the field $k$ gives rise to a commutative $\sphere$-algebra denoted $Hk$ whose stable homotopy is $k$ in degree 0 and zero elsewhere. The $\sphere$-algebra $Hk$ is also called an Eilenberg-Mac Lane spectrum. Let $X$ be a connected space. The cochains of $X$ with coefficients in $k$ is the commutative $\sphere$-algebra \[\chains^*(X;k)=F_\sphere(\Sigma^\infty X_+,Hk)\]
where $F_\sphere$ stands for the function $\sphere$-module. We will usually denote the cochains on $X$ by $\chains^*X$ unless we want to emphasize the role of $k$. Similarly, $H^*X$ is $H^*(X;k)$, i.e. the cohomology of $X$ with coefficients in $k$.

Let $A$ be the differential graded algebra of singular cochains on $X$ with coefficients in $k$. In~\cite{ShipleySpectraDGA} Shipley has shown that the derived category of differential graded modules over $A$ is equivalent to $\Derived(\chains^*X)$. However $A$ is usually not commutative. Moreover, in many cases $A$ will not be equivalent to any commutative dga. One exception to this is the case where $k=\Q$ and $X$ is simply connected. Under these conditions $A$ will always be equivalent to a commutative dga.

\subsection*{Spaces}
We will work both with topological spaces and with simplicial sets. Work with simplicial sets will be carried out in Section~\ref{sec: Finitely fibred spaces over B} and we shall comment on the passage between these two equivalent Quillen model categories there. So, until we reach Section~\ref{sec: Finitely fibred spaces over B}, spaces are (compactly generated Hausdorff) topological spaces. Also until section~\ref{sec: Finitely fibred spaces over B} all the spaces we work with are implicitly $p$-complete (when $k$ is a prime field) or rational (when $k=\Q$). Hence when we talk of a sphere we mean the $p$-completion (or rationalization) of a sphere. This in particular implies that a space $X$ which has the homology (with coeffiecients in $k$) of a simply connected sphere is, in our language, weakly equivalent to a sphere.

Following~\cite{DwyerGreenleesIyengar} we say a space $X$ is of \emph{Eilenberg-Moore type (EM-type)} if $X$ is connected, $H^*X$ is of finite type and
\begin{enumerate}
\item $X$ is simply connected when $k=\Q$ or
\item $k$ is a prime field, $\pi_1(X)$ is a finite $p$-group and $X$ is $p$-complete.
\end{enumerate}

Suppose $X \longrightarrow B$ and $Y \longrightarrow B$ are maps of EM-type spaces. We denote by $X \times_B Y$ the homotopy pullback of these maps. An important feature of the fact that these are EM-type spaces is the equivalence of commutative $\sphere$-algebras:
\[ \chains^*(X\times_B Y) \sim \chains^*X \otimes_{\chains^*B} \chains^*Y \]

\section{Stratifying spherically odd complete intersections spaces}
\label{sec: Stratifying spherically odd complete intersections spaces}

In this section we prove two of the main theorems. Before presenting them we require some background and definitions.

A local Noetherian ring R is \emph{complete intersections} if its completion is of the form $Q/(f_1,...,f_c)$ where $Q$ is a regular local ring and $f_1,...,f_c$ is a regular sequence. One way to mimic this is to start with a space which has polynomial cohomology and successively construct spherical fibrations. This is because a spherical fibration yields properties on the cochain algebras of the base and total space which are akin to dividing by a regular element. This approach is a major theme of~\cite{GreenleesHessShamir} and~\cite{BensonGreenleesShamir} and it lies at the heart of the following definition.

\begin{definition}
\label{def: Soci}
A connected space of EM-type $X$ is \emph{spherically odd complete intersections (soci)} if there is a connected space $B$ such that $H^*B$ is a graded polynomial ring on finitely many even degree generators and there are fibrations
\[ S^{n_1} \longrightarrow X_1 \longrightarrow X_0=B, \quad S^{n_2} \longrightarrow X_2 \longrightarrow X_1,\ \ldots \ , \quad S^{n_c} \longrightarrow X_c \longrightarrow X_{c-1} \]
with $X=X_c$ and $n_i$ is odd and greater than 1 for all $i=1,...,c$.
\end{definition}

The two main theorems we shall prove here are the following.

\begin{theorem}
\label{thm: Stratification of algebra with polynomial homotopy}
Let $R$ be a commutative coconnective $\sphere$-algebra such that $\pi_*R$ is a polynomial ring over $k$ on finitely many generators in even degrees. Then $\Derived(R)$ is stratified by the action of $\pi_*R$.
\end{theorem}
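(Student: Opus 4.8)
The plan is to verify the two conditions of Definition~\ref{def: Stratification}. Condition \textbf{S1} (the local-global principle) is immediate: since $\pi_*R$ acts on $\Derived(R)$ canonically via the identity $\pi_*R \to \pi^*R$, condition \textbf{S1} holds automatically by the discussion of canonical actions following~\cite[Theorem 7.2]{BIKstratifying}. (Alternatively, $\pi_*R$ is a polynomial ring on finitely many generators, hence of finite Krull dimension, so \textbf{S1} follows from~\cite[Corollary 3.5]{BIKstratifying}.) So the entire content is condition \textbf{S2}: for every homogeneous prime $\pp \subset S:=\pi_*R$ the localizing subcategory $\Im\Gamma_\pp$ is zero or minimal.

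Fix $\pp \in \spec S$ and set $\vv = \vv(\pp)$. First localize: $R_\pp = L_{\zz(\pp)}R$ is again a commutative $\sphere$-algebra by Proposition~\ref{pro: Bousfiled localization and spec localization}, with $\pi^*(R_\pp) = (\pi^*R)_\pp$ a graded local ring concentrated in even degrees (a localization of a polynomial ring), and working inside $\Derived(R_\pp) \simeq \Im L_{\zz(\pp)}$ we have $\Gamma_\pp X = \Gamma_{\vv(\pp)}(X_\pp)$. Since $(\pi^*R)_\pp$ is a \emph{regular} local ring, the maximal ideal $\pp(\pi^*R)_\pp$ is generated by a regular sequence $X = (x_1,\dots,x_n)$ of non-zero divisors (chosen homogeneously, in even degrees), so $\pi^*(R_\pp/X)$ is a graded field. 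By Corollary~\ref{cor: Dividing by a regular sequence may yield a ring object} the object $R_\pp/X$ carries a ring object structure, and by Corollary~\ref{cor: The localizing subactegory generated by a field like is minimal} the subcategory $\loc(R_\pp/X)$ is minimal. It therefore suffices to identify $\Im\Gamma_\pp$ with $\loc_{\Derived(R_\pp)}(R_\pp/X)$ inside $\Derived(R_\pp)$: the colocalization functor $\Gamma_{\vv(\pp)}$ on $\Derived(R_\pp)$ has image the $\vv(\pp)$-torsion objects, i.e. the objects whose homotopy is supported at $\pp$, and by Proposition~\ref{pro: Cellular A/X objects are power torsion} (applied over $R_\pp$, with the sequence $X$) an object lies in $\loc(R_\pp/X)$ exactly when its homotopy is $(X)$-power torsion, which for a module over the local ring $(\pi^*R)_\pp$ with $(X)$ the maximal ideal is precisely the condition of being $\vv(\pp)$-torsion. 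Hence $\Im\Gamma_\pp = \loc(R_\pp/X)$, which is minimal; and $\Im\Gamma_\pp$ is nonzero iff $\pp \in \supp_S R$, in which case it is minimal. This gives \textbf{S2}.

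One point requires care, and I expect it to be the main technical obstacle: passing between localizing subcategories of $\Im L_{\zz(\pp)}$ and of $\Derived(R_\pp)$ under the equivalence of Proposition~\ref{pro: Bousfiled localization and spec localization}, and checking that the functors $\Gamma_{\vv(\pp)}$, the Koszul objects $R_\pp/X$, and the notion of $(X)$-power torsion are all compatible with this equivalence (so that ``minimal in $\Derived(R_\pp)$'' really does transport to ``minimal in $\Im\Gamma_\pp \subset \Derived(R)$''). This is essentially bookkeeping: the equivalence is monoidal and identifies $\pi^*$ with $(\pi^*R)_\pp$-module homotopy, so Koszul objects on $X$ match up and the characterization of $(X)$-power torsion homotopy is preserved; but it must be stated carefully. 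A secondary point is to justify that the regular sequence generating $\pp(\pi^*R)_\pp$ can be taken in even degrees — this is where the hypothesis that the polynomial generators of $\pi_*R$ sit in even degrees is genuinely used, since it guarantees $\pi^*R$ is concentrated in even degrees, which is the running assumption needed for all of Section~\ref{sec: Ring objects arising from regular sequences} (ring object structures on Koszul quotients) to apply.
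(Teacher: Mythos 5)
Your proposal is correct and takes essentially the same route as the paper's proof: \textbf{S1} from the canonical action / finite Krull dimension, and \textbf{S2} by passing to $R_\pp$ via Proposition~\ref{pro: Bousfiled localization and spec localization}, killing an even-degree regular sequence $X$ generating the maximal ideal of $(\pi^*R)_\pp$ to get a ring object with graded-field homotopy (Corollary~\ref{cor: Dividing by a regular sequence may yield a ring object}, Corollary~\ref{cor: The localizing subactegory generated by a field like is minimal}), and then using the power-torsion characterization of $\loc_{\Derived(R_\pp)}(R_\pp/X)$ (Proposition~\ref{pro: Cellular A/X objects are power torsion}). The only cosmetic difference is that the paper sidesteps identifying $\Im\Gamma_\pp$ outright with the torsion objects: it uses that $L_{\zz(\pp)}$ and $\Gamma_{\vv(\pp)}$ are smashing, so $\Gamma_\pp R$ generates $\Im\Gamma_\pp$, and then checks $\Gamma_\pp R\in\loc_{\Derived(R_\pp)}(R_\pp/X)$ -- exactly the bookkeeping point you flag, handled the same way.
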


\begin{theorem}
\label{the: Stratification of cochains on soci}
Let $X$ be an soci space, then $\Derived(\chains^*X)$ is stratified by the natural action of $H^*X$.
\end{theorem}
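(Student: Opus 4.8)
\textbf{Proof plan for Theorem~\ref{the: Stratification of cochains on soci}.}
The plan is to reduce the statement to Theorem~\ref{thm: Stratification of algebra with polynomial homotopy} by induction on the number $c$ of spherical fibrations in the soci tower
\[ S^{n_1} \longrightarrow X_1 \longrightarrow X_0=B, \ \ldots \ , \quad S^{n_c} \longrightarrow X_c \longrightarrow X_{c-1}, \]
with $X=X_c$. The base case $c=0$ is precisely Theorem~\ref{thm: Stratification of algebra with polynomial homotopy} applied to $R=\chains^*B$, since the hypothesis gives $\pi_*\chains^*B = H^*B$ polynomial on finitely many even-degree generators (note $\chains^*B$ is coconnective). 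For the inductive step, suppose $\Derived(\chains^*X_{c-1})$ is stratified by the action of $H^*X_{c-1}$; we must show $\Derived(\chains^*X)$ is stratified by $H^*X$, where we have a single spherical fibration $S^n \to X \to X_{c-1}$ with $n$ odd, $n>1$.

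First I would set $R=\chains^*X_{c-1}$ and analyze $A=\chains^*X$ as an $R$-algebra. Since $S^n \to X \to X_{c-1}$ is a fibration of EM-type spaces, the Eilenberg--Moore equivalence gives $A \otimes_R A \simeq \chains^*(X\times_{X_{c-1}} X)$, and the fibre $S^n$ being an odd sphere means $\chains^*S^n$ behaves, over $k$, like an exterior algebra on one odd generator $\theta$ of degree $n$; dually this means $A$ is obtained from $R$ by "killing" one element, i.e. there should be an exact triangle exhibiting $A$ or a close relative of $A$ in the form $R/z$ up to a shift, where $z$ corresponds to the Euler class of the fibration living in $H^{n+1}X_{c-1}$. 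Concretely, the cofibre sequence $S^n \to X \to X_{c-1}$ and the Gysin-type behaviour give $\pi^*A = H^*X$, which as an $H^*X_{c-1}$-module sits in an extension with the base cohomology and a shifted copy, and over the ring spectrum level $A$ is built from $R$ by a single such operation. The key structural input is therefore: identify $A=\chains^*X$ with $R/z$ (or recognize that $\Derived(A)$ sits inside $\Derived(R)$ as $\loc_\Derived(R/z)$-related data) where $z$ is a regular element of the polynomial-like ring $\pi^*R$ — here I would invoke the machinery of Sections~\ref{sec: Ring objects arising from regular sequences} and~\ref{sec: Colocalization methods}: $R/z$ has a ring object structure (the homotopy being concentrated in even degrees at the bottom of the tower, and inductively one checks the relevant regularity), and the change-of-rings/colocalization results let one compare localizing and thick subcategories of $\Derived(A)$ with those of $\Derived(R)$.

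The heart of the argument is then to transport stratification across this change of rings. I would verify condition \textbf{S1} (the local–global principle) for the action of $H^*X$ on $\Derived(\chains^*X)$ using that this action is canonical — it factors through $\pi^*\chains^*X = H^*X$ — so by the remark following the discussion of tensor/cotensor (citing \cite[Theorem 7.2]{BIKstratifying}), \textbf{S1} holds automatically, or alternatively because $H^*X$ has finite Krull dimension (it is Noetherian by the soci hypothesis, being a complete-intersection-type deformation of a polynomial ring). For condition \textbf{S2}, I would show that for each $\pp \in \spec H^*X$ the subcategory $\Im\Gamma_\pp$ in $\Derived(\chains^*X)$ is zero or minimal. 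The strategy is: a prime $\pp$ of $H^*X$ contracts to a prime $\qq$ of $H^*X_{c-1}$ along the finite ring map $H^*X_{c-1} \to H^*X$ (finiteness coming from the Gysin sequence and the odd sphere fibre); by the inductive stratification of $\Derived(\chains^*X_{c-1})$, the subcategory $\Im\Gamma_\qq$ over the base is minimal, generated by a "residue field object" of the form $R/X'$ for a maximal regular sequence $X'$ as in Corollary~\ref{cor: The localizing subactegory generated by a field like is minimal}; then tensoring up to $A=\chains^*X$ and dividing by the one extra (odd) class, one produces a residue-field-like ring object over $A$ whose homotopy is a graded field, and Corollary~\ref{cor: The localizing subactegory generated by a field like is minimal} (applied now with $R$ replaced by an appropriate localization of $A$) shows the localizing subcategory it generates is minimal, which one identifies with $\Im\Gamma_\pp$.

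\textbf{The main obstacle} I anticipate is the bookkeeping in the inductive step around the odd-degree generator: $\pi^*\chains^*X$ is no longer concentrated in even degrees once one passes the first fibration, so the results of Section~\ref{sec: Ring objects arising from regular sequences} (which repeatedly assume $\pi^*R$ even) cannot be applied naively to $R=\chains^*X_{c-1}$ for $c\ge 2$. The correct fix — and the technically delicate part — is to not divide by the odd class directly but to work with the even-degree polynomial base $B$ throughout: one should show that $\chains^*X$, for $X$ soci with tower length $c$, is a ring object (or even a suitable quotient) of $\chains^*B$ obtained by dividing by a regular sequence of even-degree classes (the "Euler classes" of the odd-sphere fibrations land in even cohomological degree $n_i+1$), so that the entire analysis takes place over $R=\chains^*B$ where $\pi^*R$ is genuinely even and polynomial, and Section~\ref{sec: Ring objects arising from regular sequences} applies verbatim. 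Verifying that this sequence of Euler classes is indeed regular in $H^*B$ — equivalently, that each successive fibration drops the Krull dimension by one in the expected way — is what ties the argument to the complete-intersection philosophy motivating Definition~\ref{def: Soci}, and is where I expect the real work to lie; once that regularity is in hand, the stratification of $\Derived(\chains^*X) = \Derived(\chains^*B / (\text{even regular sequence}))$ should follow from Theorem~\ref{thm: Stratification of algebra with polynomial homotopy} together with a change-of-rings argument identifying supports over $H^*X$ with supports over $H^*B$ restricted to $\vv$ of the ideal generated by the Euler classes.
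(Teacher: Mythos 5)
Your overall plan---induction on the length of the soci tower with Theorem~\ref{thm: Stratification of algebra with polynomial homotopy} as the base case---matches the paper's strategy, and you correctly diagnose the central difficulty: once you pass the first spherical fibration, $\pi^*\chains^*X_i$ acquires odd-degree classes, so the even-degree hypothesis of Section~\ref{sec: Ring objects arising from regular sequences} fails and you cannot naively present $\chains^*X_i$ as a quotient ring object of $\chains^*X_{i-1}$.

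However, the fix you propose---realize $\chains^*X$ directly as $\chains^*B$ divided by a regular sequence of even Euler classes, so that all the Section~\ref{sec: Ring objects arising from regular sequences} machinery applies over the polynomial base---does not work, and the paper's own Example~\ref{sub: A rational example} is a counterexample. There $B=\mathbb{C}P^\infty\times\mathbb{C}P^\infty$ with $H^*B=\Q[u,v]$, but $H^*X=\Q[u,v,t]/(u^2,uv,ut,t^2)$ has a new generator $t$ of codegree $5$ that is not in the image of $H^*B\to H^*X$. If $\chains^*X$ were $\chains^*B/(z_1,\dots,z_c)$ for a regular sequence in $\pi^*\chains^*B$, then $\pi^*\chains^*X$ would be the quotient ring $H^*B/(z_1,\dots,z_c)$ and in particular would be generated by the image of $H^*B$; it is not. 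More generally, the Euler class of the $i$-th fibration lives in $H^{n_i+1}(X_{i-1})$, need not lift to $H^*B$, and even when it does its lift need not form a regular sequence in $H^*B$; the Gysin triangle $\chains^*X_{i-1}\to\chains^*X_i\to\Sigma^{-n_i}\chains^*X_{i-1}$ only exhibits $\chains^*X_i$ as a cofibre of a map over $\chains^*X_{i-1}$, and regularity of that class in $H^*X_{i-1}$ is exactly what can fail once $H^*X_{i-1}$ stops being polynomial.

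The paper's inductive step (Corollary~\ref{cor: Induction step for soci}) sidesteps the issue entirely by never attempting a ``regular quotient'' presentation of $\chains^*X$. Instead, it proves change-of-rings lemmas: that $M$ and $\chains^*X\otimes_{\chains^*Y}M$ build each other (using finiteness results imported from~\cite{GreenleesHessShamir} and~\cite{BensonGreenleesShamir}), whence a bijection between localizing subcategories of $\Derived(\chains^*X)$ and localizing subcategories of $\loc_{\Derived(\chains^*Y)}(\chains^*X)$ (Corollary~\ref{cor: Bijection of localizing subcats for soci}); this transports minimality from the base. One then observes that $H^*Y\to H^*X$ is a finite ring map (Lemma~\ref{lem: X is Noetherian}), and uses the general fact that stratification by a ring $S'$ implies stratification by any Noetherian $S$ sitting between $S'$ and $\pi^*R$ to upgrade from the image subring $g(H^*Y)$ to all of $H^*X$. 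Your sketch gestures at some of these ingredients (finiteness of $H^*X_{c-1}\to H^*X$, contraction of primes), but the engine you put them into---regular-sequence ring objects over $\chains^*B$---is the one piece that genuinely fails, and without the build-each-other lemmas and the bijection of localizing subcategories, condition \textbf{S2} does not get transported up the tower.
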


\subsection*{Proof of Theorem~\ref{thm: Stratification of algebra with polynomial homotopy}}
We start by showing that the action of $\pi^*R$ on $\Derived$ satisfies condition \textbf{S2} of Definition~\ref{def: Stratification}. Since $\pi^*R$ is a regular ring its localization at a prime is regular local domain. Let $\pp$ be a homogeneous prime ideal of $\pi^*R$. Recall that $R_\pp=L_{\zz(\pp)}$, hence $R_\pp$ can be constructed as a commutative $R$-algbera and $\Im L_{\zz(\pp)}$ is equivalent to $\Derived(R_\pp)$ (Proposition~\ref{pro: Bousfiled localization and spec localization}).

In this paragraph we shall be working in $\Derived(R_\pp)$. Since $\pi^*R_\pp = (\pi^*R)_\pp$ we see that $\pi^*R_\pp$ is a regular local domain and therefore its maximal ideal is generated by a regular sequence of non-zero divisors $X=(x_1,...,x_n)$ in $\pi^*R_\pp$ (see for example~\cite[Exercise 2.2.24]{BrunsHerzog}). Since $\pi^*R_\pp$ is concentrated in even degrees and $X$ is a regular sequence then by Corollary~\ref{cor: Dividing by a regular sequence may yield a ring object} there is a ring object structure on $R_\pp/X$ such that the obvious morphism $R_\pp \to R_\pp/X$ is a morphism of ring objects. Since $\pi^*(R_\pp/X)$ is a graded field, Corollary~\ref{cor: The localizing subactegory generated by a field like is minimal} shows that $\loc_{\Derived(R_\pp)} (R_\pp/X)$ is minimal.

Consider the object $\Gamma_\pp R=\Gamma_{\vv(\pp)} R_\pp$. Since both $L_{\zz(\pp)}$ and $\Gamma_{\vv(\pp)}$ are smashing we see that $\Gamma_\pp R$ generates $\Im \Gamma_\pp$. Moreover, by~\cite[Corollary 6.5]{BIKsupport} $\Gamma_\pp R$ is isomorphic to $R_\pp \otimes_R \Gamma_{\vv(\pp)} R$. In this fashion we may consider $\Gamma_\pp R$ also as an object of $\Derived(R_\pp)$.

The $\pi^*R$-module $\pi^*(\Gamma_\pp R)$ is $\pp$-power torsion because $\Gamma_\pp R \in \Im \Gamma_{\vv(\pp)} = \Derived_{\vv(\pp)}$. It follows that $\pi^*(\Gamma_\pp R)$ is an $(X)$-power torsion $\pi^* R_\pp$-module. By Proposition~\ref{pro: Cellular A/X objects are power torsion} this implies that $\Gamma_\pp R \in \loc_{\Derived(R_\pp)} (R_\pp/X)$ and therefore $\loc_{\Derived(R_\pp)} (\Gamma_\pp R)$ is minimal. From the equivalence of the categories $\Im L_{\zz(\pp)}$ and $\Derived(R_\pp)$ we conclude that $\loc_{\Derived(R_\pp)} (\Gamma_\pp R)$ is equivalent to $\loc_{\Derived} (\Gamma_\pp R)$ and therefore the latter localizing subcategory is also minimal.

By~\cite[Corollary 3.5]{BIKsupport} condition \textbf{S1} is satisfied since the Krull dimension of $\pi^*R$ is finite. \qed

\subsection*{An inductive step}
Here we present an inductive step (Corollary~\ref{cor: Induction step for soci}) which is the main ingredient in the proof of Theorem~\ref{the: Stratification of cochains on soci}. To set the stage we first note some properties arising from a map spaces.

A map $X \longrightarrow Y$ of connected spaces yields a morphism of commutative $\sphere$-algebras $\chains^*Y \to \chains^*X$. This in turn gives rise to the usual forgetful functor
\[ G:\Derived(\chains^*X) \to \Derived(\chains^*Y) \]
which has a left adjoint
\[ F:\Derived(\chains^*Y) \to \Derived(\chains^*X) \]
given by $M \mapsto \chains^*X \otimes_{\chains^*Y} M$. We shall repeatedly use the fact that the preimage under either $F$ or $G$ of a localizing subcategory is a localizing subcategory. Indeed, this is true under any triangulated functor which respects coproducts.

For the rest of this section $f:X \longrightarrow Y$ is a map of EM-type spaces whose homotopy fibre is an odd simply-connected sphere. Let $R$ denote $\chains^*X$ and let $Q$ denote $\chains^*Y$.

\begin{lemma}
If $M\in \Derived(R)$ then $M$ and $R \otimes_{Q} M$ build each other in $\Derived(R)$.
\end{lemma}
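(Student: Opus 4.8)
The plan is to pass to the space $X\times_Y X$ and reduce the statement to a question about the diagonal inside $\Derived(\chains^*(X\times_Y X))$. Set $B=\chains^*(X\times_Y X)$; by the Eilenberg--Moore property at the end of Section~\ref{sec: Spaces and cochains} there is an equivalence $B\simeq R\otimes_Q R$ of commutative $\sphere$-algebras. The two projections $\mathrm{pr}_i\colon X\times_Y X\to X$ make $B$ an $R$-algebra in two ways, and the diagonal $\Delta\colon X\to X\times_Y X$ induces a ring map $\Delta^*\colon B\to R$ with $\Delta^*\mathrm{pr}_1^*=\Delta^*\mathrm{pr}_2^*=1_R$. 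Write $R_\Delta$ for $R$ regarded as a $B$-module via $\Delta^*$. For any $R$-module $M$, associativity of $\otimes_Q$ together with $R\otimes_R M\cong M$ gives a natural isomorphism $R\otimes_Q M\cong B\otimes_R M$, where $B$ is made a right $R$-module through $\mathrm{pr}_2^*$ and the surviving left $R$-action is the one coming from $\mathrm{pr}_1^*$. Hence the coproduct-preserving triangulated functor $(-)\otimes_R M\colon\Derived(B)\to\Derived(R)$ sends the free module $B$ to $R\otimes_Q M$ and sends $R_\Delta$ to $M$, so it suffices to prove that $B$ and $R_\Delta$ build each other in $\Derived(B)$.

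One direction is immediate: $B$ is the unit of $\Derived(B)$, hence builds every object, in particular $R_\Delta$; applying the functor above, $R\otimes_Q M$ builds $M$. For the converse we must show $B\in\loc_{\Derived(B)}(R_\Delta)$, and this is where the fibre being an \emph{odd} sphere is used. The projection $\mathrm{pr}_1$ is a fibration with fibre $S^d$, $d$ odd, obtained by base change of $f$ along $f$; its Euler class is $f^*$ of the Euler class of $f$, which vanishes by the Gysin sequence of $f$. Thus the Gysin sequence of $\mathrm{pr}_1$ splits, $H^*(X\times_Y X)$ is free over $H^*X$ on $1$ and a degree-$d$ class, which the section $\Delta$ allows us to normalise to a class $\omega$ with $\Delta^*\omega=0$; since $d$ is odd, $\omega^2=0$, and $B\cong R\otimes_k\Lambda(\omega)$ as augmented $R$-algebras, with $\Delta^*$ the augmentation. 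Now $\pi^*B$ is $(\omega)$-power torsion over itself (being killed by $\omega^2$), so Proposition~\ref{pro: Cellular A/X objects are power torsion}, applied to the ring $B$ and the single element $\omega$, gives $B\in\loc_{\Derived(B)}(B/\omega)$. Finally $B/\omega\cong R\otimes_k(\Lambda(\omega)/\omega)$, and since $\Lambda(\omega)$ has Artinian homotopy its residue field $k$ generates $\Derived(\Lambda(\omega))$; applying the coproduct-preserving triangulated functor $R\otimes_k(-)\colon\Derived(\Lambda(\omega))\to\Derived(B)$, which sends $k$ to $R_\Delta$, yields $B/\omega\in\loc_{\Derived(B)}(R_\Delta)$. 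Chaining the two containments gives $B\in\loc_{\Derived(B)}(R_\Delta)$, and applying the functor shows that $M$ builds $R\otimes_Q M$.

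I expect the final step -- establishing $B\in\loc_{\Derived(B)}(R_\Delta)$, i.e.\ that the diagonal $\chains^*X$ generates $\Derived(\chains^*(X\times_Y X))$ -- to be the delicate point, and it is precisely here that odd-dimensionality is indispensable: it forces the extra generator $\omega$ to square to zero, so that the augmentation ideal of $B$ over $R_\Delta$ is nilpotent. (For $d$ even one would instead meet the Euler class of $\mathrm{pr}_1$, which need not be nilpotent; and when $k=\Int/2$ a little extra care is required to see that $\omega$ can be chosen with $\omega^2=0$, or one argues directly with the augmentation ideal.) Everything else -- the reduction to $\Derived(B)$, the splitting of $B$ as an $R$-module, and the trivial direction -- is formal.
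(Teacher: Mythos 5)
Your reduction to $\Derived(B)$, $B=\chains^*(X\times_Y X)\simeq R\otimes_Q R$, is correct and rather slick: the functor $(-)\otimes_R M\colon\Derived(B)\to\Derived(R)$ (restricting along $\mathrm{pr}_1^*$ for the target module structure and tensoring along $\mathrm{pr}_2^*$) does send $B\mapsto R\otimes_Q M$ and $R_\Delta\mapsto M$, and the two directions of the lemma do correspond to ``$B$ builds $R_\Delta$'' and ``$R_\Delta$ builds $B$'' in $\Derived(B)$. Your trivial direction matches the paper's (which also reduces to the diagonal). You have also correctly located the substantive direction: $R_\Delta$ builds $B$, i.e.\ the diagonal cochains generate $\Derived(\chains^*(X\times_Y X))$. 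Here, however, your route diverges sharply from the paper's. The paper does \emph{not} attempt to prove this from scratch; it cites \cite[Proposition~10.5]{GreenleesHessShamir} rationally and \cite[Theorem~7.3]{BensonGreenleesShamir} mod $p$, both of which are nontrivial results (and in fact give the stronger conclusion that $M$ \emph{finitely} builds $R\otimes_Q M$), and then disposes of the other, easy, direction by the diagonal/projection argument you also give.

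The gap in your argument for the hard direction is the passage from the additive and multiplicative splitting of $H^*(X\times_Y X)$ given by the Gysin sequence to the claim that $B\cong R\otimes_k\Lambda(\omega)$ \emph{as commutative $\sphere$-algebras}. The Gysin sequence, together with the vanishing of the pulled-back Euler class and the normalisation $\Delta^*\omega=0$, does give $\pi^*B\cong\pi^*R\otimes_k\Lambda(\omega)$ as a $\pi^*R$-algebra (at least away from characteristic $2$). But you use strictly more: your final step factors through a triangulated functor $R\otimes_k(-)\colon\Derived(\Lambda(\omega))\to\Derived(B)$, which needs a ring-spectrum map $\Lambda(\omega)\to B$ realising the relation $\omega^2=0$ coherently, together with an identification $B\otimes_{\Lambda(\omega)}k\simeq R_\Delta$. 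Neither of these is automatic from the cohomological splitting; this is a formality/degeneration statement about the fibration $\mathrm{pr}_1$. Rationally it is plausibly repairable (an odd spherical fibration is rationally classified by its Euler class, so $\mathrm{pr}_1$ is rationally trivial and $B\simeq_{\Q}\chains^*X\otimes\chains^*S^d$, with $\chains^*S^d$ formal), but you would still need to check that $\Delta$ matches the chosen trivialisation. Mod $p$ the classifying space of $S^d$-fibrations is far from a $K(\pi,d+1)$, and there is no reason for $\chains^*(X\times_Y X)$ to split off an exterior tensor factor as an $\sphere$-algebra; this is exactly the sort of difficulty that the cited \cite[Theorem~7.3]{BensonGreenleesShamir} is designed to circumvent. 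Your acknowledged uncertainty about $\omega^2=0$ when $k=\Int/2$ is a second, genuine, unresolved point (normalising $\omega$ only lets you adjust $\omega^2$ by squares from $H^dX$, so the obstruction need not vanish). In short: the formal reduction to $\Derived(B)$ is good, the easy direction is fine, but the argument that $R_\Delta$ builds $B$ relies on an unjustified $\sphere$-algebra splitting, and the paper instead deliberately delegates this point to external results.
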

\begin{proof}
For any $M \in \Derived(R)$, the object $FG(M)=R\otimes_Q M$ is finitely built by $M$ in $\Derived(R)$. This follows from~\cite[Proposition 10.5]{GreenleesHessShamir} in the rational case and from~\cite[Theorem 7.3]{BensonGreenleesShamir} in the mod-$p$ case.

On the other hand, $R\otimes_Q M$ builds $M$ in $\Derived(R)$. To see this, note that it is enough to show that $R\otimes_Q R$ builds $R$ in $\Derived(R)$. Recall that $\chains^*(X\times_Y X) \simeq R \otimes_Q R$ as $R$-modules. The diagonal map $X \longrightarrow X \times_Y X$ induces a map of $\sphere$-algebras $\chains^*(X\times_Y X) \to \chains^*X$ showing that $\chains^*(X\times_Y X)$ builds $R$ over $\chains^*(X\times_Y X)$. Using one of the projections $X\times_Y X \longrightarrow X$ we see that $\chains^*(X\times_Y X)$ builds $R$ in $\Derived(R)$.
\end{proof}

We now add two assumptions that will be in force until the end of this section. First, that $H^*Y$ is Noetherian and second that $\Derived(Q)$ is stratified by the natural action of $H^*Y=\pi^*Q$.

\begin{lemma}
If $M\in \loc_{\Derived(Q)} R$ then $M$ and $R \otimes_{Q} M$ build each other in $\Derived(Q)$.
\end{lemma}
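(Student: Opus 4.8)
The plan is to prove the two building relations separately. Writing ``$M$ and $R\otimes_Q M$ build each other in $\Derived(Q)$'' as the conjunction of $R\otimes_Q M\in\loc_{\Derived(Q)}(M)$ and $M\in\loc_{\Derived(Q)}(R\otimes_Q M)$, only the second of these will use the standing hypothesis that $\Derived(Q)$ is stratified. Throughout, $R\otimes_Q M$ is regarded as an object of $\Derived(Q)$ via the forgetful functor, so that $\otimes_Q$ is the monoidal product of $\Derived(Q)$ and all supports are taken over $S=H^*Y=\pi^*Q$.

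For the first relation I would argue purely formally. The full subcategory $\{N\in\Derived(Q)\mid M\otimes_Q N\in\loc_{\Derived(Q)}(M)\}$ is localizing, since $M\otimes_Q-$ is exact and preserves coproducts, and it contains the unit $Q$ because $M\otimes_Q Q\cong M$; hence it is all of $\Derived(Q)$, and taking $N=R$ (and using the twist isomorphism) gives $R\otimes_Q M\in\loc_{\Derived(Q)}(M)$. This step does not use the hypothesis $M\in\loc_{\Derived(Q)}(R)$.

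For the second relation I would pass to supports. Since $M\in\loc_{\Derived(Q)}(R)$ and $\Ker\Gamma_\pp$ is a localizing subcategory for every $\pp$, we get $\supp_S M\subseteq\supp_S R$. The key step is then the identity $\supp_S(R\otimes_Q M)=\supp_S R\cap\supp_S M$: since $\Derived(Q)$ is a tensor triangulated category on which $S$ acts canonically and which is stratified by $S$, this is the tensor-product formula for support \cite[Section~7]{BIKstratifying}. Combined with $\supp_S M\subseteq\supp_S R$ it yields $\supp_S(R\otimes_Q M)=\supp_S M$, and then the classification of localizing subcategories \cite[Theorem~4.2]{BIKstratifying} gives $\loc_{\Derived(Q)}(R\otimes_Q M)=\loc_{\Derived(Q)}(M)$; in particular $M\in\loc_{\Derived(Q)}(R\otimes_Q M)$, which is the remaining assertion.

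The main obstacle is precisely this identification of $\supp_S(R\otimes_Q M)$; everything else is routine. If one prefers not to quote the tensor-product formula, the single inclusion actually needed, $\supp_S M\subseteq\supp_S(R\otimes_Q M)$, can be extracted directly from condition \textbf{S2}: for $\pp\in\supp_S M$ the object $\Gamma_\pp M$ is nonzero, and since $\Gamma_\pp$ is smashing one has $\Gamma_\pp(R\otimes_Q M)\cong\Gamma_\pp Q\otimes_Q R\otimes_Q M\cong R\otimes_Q\Gamma_\pp M$; if this were zero, then $\Gamma_\pp M\in\Ker(R\otimes_Q-)$, and as $\Ker(R\otimes_Q-)$ is localizing while $\Im\Gamma_\pp$ is nonzero, hence minimal by \textbf{S2}, we would get $\Im\Gamma_\pp\subseteq\Ker(R\otimes_Q-)$, whence $\Gamma_\pp R\cong\Gamma_\pp Q\otimes_Q R=0$, i.e.\ $\pp\notin\supp_S R$ --- contradicting $\supp_S M\subseteq\supp_S R$.
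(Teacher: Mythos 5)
Your proof is correct and takes essentially the same route as the paper: deduce $\supp_{H^*Y} M \subseteq \supp_{H^*Y} R$, apply the tensor-product formula for supports to get $\supp_{H^*Y}(R\otimes_Q M)=\supp_{H^*Y}M$, and conclude by the classification of localizing subcategories \cite[Theorem 4.2]{BIKstratifying}. Your minor deviations --- obtaining $R\otimes_Q M\in\loc_{\Derived(Q)}(M)$ formally from the tensor-ideal/unit-generator argument rather than from Theorem 4.2, deriving $\supp M\subseteq\supp R$ from $\Ker\Gamma_\pp$ being localizing, and the optional \textbf{S2}-based substitute for the tensor formula --- are sound and only make the argument slightly more self-contained.
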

\begin{proof}
Let $M$ be an object in $\loc_{\Derived(Q)}(R)$. Since $\Derived(Q)$ is stratified by $H^*Y$, then~\cite[Theorem 4.2]{BIKstratifying} shows that $\supp_{H^*Y} M \subset \supp_{H^*Y} R$. Since $H^*Y$ is Noetherian, then by~\cite[Theorem 1.6]{BIKstratifying}
\[ \supp_{H^*Y} (R \otimes_Q M) = \supp_{H^*Y} R \cap \supp_{H^*Y} M = \supp_{H^*Y} M\]
Using~\cite[Theorem 4.2]{BIKstratifying} once more we conclude that $M$ and $R\otimes_Q M$ build each other over $Q$.
\end{proof}

\begin{corollary}
\label{cor: Bijection of localizing subcats for soci}
There is a bijection of partially ordered sets
\[ \left\{
     \begin{array}{c}
        \text{Localizing subcategories}\\
        \text{of }\Derived(R)\\
     \end{array}
   \right\}
   \longleftrightarrow
   \left\{
     \begin{array}{c}
        \text{Localizing subcategories}\\
        \text{of }\loc_{\Derived(Q)}(R)\\
     \end{array}
   \right\} \]
This bijection is given on the one hand by the preimage $G^{-1}$ of the forgetful functor $G:\Derived(R) \to \Derived(Q)$ and on the other hand by $\loc_{\Derived(Q)}(R) \cap F^{-1}$ where $F$ is the left adjoint of $G$.
\end{corollary}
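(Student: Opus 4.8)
The plan is to exhibit the two displayed assignments as mutually inverse, inclusion-preserving bijections. Write $\Phi$ for the assignment $\mathsf{B} \mapsto G^{-1}\mathsf{B} = \{\, M \in \Derived(R) \mid GM \in \mathsf{B}\,\}$, taking a localizing subcategory $\mathsf{B}$ of $\loc_{\Derived(Q)}(R)$ to a subcategory of $\Derived(R)$, and $\Psi$ for the assignment $\mathsf{A} \mapsto \loc_{\Derived(Q)}(R) \cap F^{-1}\mathsf{A} = \{\, N \in \loc_{\Derived(Q)}(R) \mid FN \in \mathsf{A}\,\}$ in the other direction. Since $F$ and $G$ are triangulated and preserve coproducts, preimages of localizing subcategories under them are again localizing; and $\loc_{\Derived(Q)}(R)$, being closed under the coproducts of $\Derived(Q)$, has the same localizing subcategories whether viewed in its own right or as the localizing subcategories of $\Derived(Q)$ contained in it. Hence $\Phi$ and $\Psi$ are well defined, and they are visibly monotone. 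I also record the elementary observation that $G$ carries $\Derived(R)=\loc_{\Derived(R)}(R)$ into $\loc_{\Derived(Q)}(GR)=\loc_{\Derived(Q)}(R)$, since $G$ is exact, preserves coproducts, and $GR=R$; in particular $GM \in \loc_{\Derived(Q)}(R)$ for every $M\in\Derived(R)$.

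First I would check that $\Psi\circ\Phi$ is the identity. For a localizing subcategory $\mathsf{B}$ of $\loc_{\Derived(Q)}(R)$ one computes directly that $\Psi(\Phi(\mathsf{B})) = \{\, N \in \loc_{\Derived(Q)}(R) \mid GFN \in \mathsf{B}\,\}$, where $GFN$ is exactly the object $R\otimes_Q N$ appearing in the second of the two preceding lemmas. That lemma says $N$ and $R\otimes_Q N$ build each other in $\Derived(Q)$ whenever $N \in \loc_{\Derived(Q)}(R)$; since $\mathsf{B}$ is a localizing subcategory, this gives $GFN \in \mathsf{B} \iff N \in \mathsf{B}$, so $\Psi(\Phi(\mathsf{B})) = \mathsf{B}$, using also $\mathsf{B}\subseteq\loc_{\Derived(Q)}(R)$.

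Next I would check that $\Phi\circ\Psi$ is the identity. For a localizing subcategory $\mathsf{A}$ of $\Derived(R)$ one has $\Phi(\Psi(\mathsf{A})) = \{\, M \in \Derived(R) \mid GM \in \loc_{\Derived(Q)}(R) \text{ and } FGM \in \mathsf{A}\,\}$; by the observation at the end of the first paragraph the first clause is automatic, so this is just $\{\, M \in \Derived(R) \mid FGM \in \mathsf{A}\,\}$, and $FGM$ is the object $R\otimes_Q M$ of the first preceding lemma. That lemma says $M$ and $R\otimes_Q M$ build each other in $\Derived(R)$, so membership of $M$ in the localizing subcategory $\mathsf{A}$ is equivalent to membership of $R\otimes_Q M$; hence $\Phi(\Psi(\mathsf{A})) = \mathsf{A}$. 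Since $\Phi$ and $\Psi$ preserve inclusions, the corollary follows.

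I do not expect a genuine obstacle here: the two "build each other" lemmas do all the work, and what remains is bookkeeping. The one point that calls for a little care is keeping track of the ambient category in each lemma — the lemma phrased in $\Derived(Q)$ applies only to objects of $\loc_{\Derived(Q)}(R)$, which is precisely why the source of $\Psi$ is that subcategory, and why the clause $GM\in\loc_{\Derived(Q)}(R)$ arising in the computation of $\Phi\circ\Psi$ must be (and is, for free) verified — together with the routine check that $\Phi$ and $\Psi$ really land among localizing subcategories.
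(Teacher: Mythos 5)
Your argument is correct and is essentially the paper's own proof: the two "build each other" lemmas (for $R\otimes_Q M$ in $\Derived(R)$, and for $R\otimes_Q N$ with $N\in\loc_{\Derived(Q)}(R)$ in $\Derived(Q)$) are used in exactly the same way to verify that the two assignments are mutually inverse, with the paper leaving implicit the small points you spell out (well-definedness via preimages, and $GM\in\loc_{\Derived(Q)}(R)$ being automatic).
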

\begin{proof}
Let $\al$ be a localizing subcategory of $\Derived(R)$. Denote by $\al'$ the localizing subcategory $\loc_{\Derived(Q)}(R) \cap F^{-1}(\al)$ and let $\al''$ be $G^{-1}(\al')$. If $M\in \al$ then $R\otimes_Q M \in \al$ and therefore $GM \in \al'$ and $M \in \al''$. If $M \in \al''$ then clearly $R \otimes_Q M \in \al$ and therefore $M \in \al$. We see that $\al=\al''$.

Let $\bl$ be a localizing subcategory of $\loc_{\Derived(Q)}(R)$. Let $\bl'=G^{-1}(\bl)$ and let $\bl''=\loc_{\Derived(Q)}(R) \cap F^{-1}(\bl')$. If $M \in \bl$ then $R \otimes_Q M \in \bl$. Since $R\otimes_Q M \cong GF(M)$ we see that $R\otimes_Q M \in \bl'$ and therefore $M \in \bl''$. If $M \in \bl''$ then clearly $R \otimes_Q M \in \bl$. Since $M$ and $R\otimes_Q M$ build each other over $Q$ we conclude that $M \in \bl$.
\end{proof}

The map $f$ induces a map $g=H^*f:H^*Y \to H^*X$ whose image $g(H^*Y)$ is a sub-algebra of $H^*X$. Hence $g(H^*Y)$ has a canonical action on $\Derived(R)$.

\begin{lemma}
The triangulated category $\Derived(R)$ is stratified by the canonical action of $g(H^*Y)$.
\end{lemma}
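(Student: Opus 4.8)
The plan is to verify the two conditions \textbf{S1} and \textbf{S2} of Definition~\ref{def: Stratification} for the canonical action of $g(H^*Y)$ on $\Derived(R)$. Condition \textbf{S1} is immediate: $g(H^*Y)$ is a subring of $H^*X=\pi^*R$, so its action on $\Derived(R)$ is canonical and the local--global principle holds by~\cite[Theorem 7.2]{BIKstratifying} (and $g(H^*Y)$, as a quotient of the Noetherian ring $H^*Y$, has finite Krull dimension, so~\cite[Corollary 3.5]{BIKstratifying} also applies). The real content is \textbf{S2}, and rather than attack the functors $\Gamma_\pp$ directly I would use the following reformulation, valid once \textbf{S1} is known: it suffices to show that $\al\mapsto\supp_{g(H^*Y)}\al$ is injective on localizing subcategories of $\Derived(R)$. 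Indeed, if $\Im\Gamma_\pp$ were nonzero and not minimal, a proper nonzero localizing subcategory $\al\subsetneq\Im\Gamma_\pp$ would satisfy $\emptyset\neq\supp_{g(H^*Y)}\al\subseteq\supp_{g(H^*Y)}\Im\Gamma_\pp\subseteq\{\pp\}$, hence $\supp_{g(H^*Y)}\al=\{\pp\}=\supp_{g(H^*Y)}\Im\Gamma_\pp$, contradicting injectivity.

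To prove this injectivity I would run everything through Corollary~\ref{cor: Bijection of localizing subcats for soci}. Unwinding its proof, the bijection $\Phi$ between localizing subcategories of $\Derived(R)$ and of $\loc_{\Derived(Q)}(R)$ is given by $\Phi(\al)=\loc_{\Derived(Q)}(\{GM\mid M\in\al\})$: one inclusion uses that $M$ builds $R\otimes_Q M$ in $\Derived(R)$, the other the lemma preceding the corollary, that $N$ and $R\otimes_Q N$ build each other in $\Derived(Q)$ when $N\in\loc_{\Derived(Q)}(R)$. Since $\Derived(Q)$ is stratified by $H^*Y$, the map $\bl\mapsto\supp_{H^*Y}^{\Derived(Q)}\bl$ is injective on localizing subcategories contained in $\loc_{\Derived(Q)}(R)$, and it carries $\Phi(\al)$ to $\bigcup_{M\in\al}\supp_{H^*Y}^{\Derived(Q)}(GM)$ (support of a localizing subcategory generated by a class equals the union of the supports, since $\Derived(Q)$ is stratified). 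Hence $\al\mapsto\bigcup_{M\in\al}\supp_{H^*Y}^{\Derived(Q)}(GM)$ is injective, and it remains only to identify this, via the closed embedding $\iota\colon\spec g(H^*Y)\hookrightarrow\spec H^*Y$ induced by the surjection $H^*Y\twoheadrightarrow g(H^*Y)$, with $\al\mapsto\iota\bigl(\supp_{g(H^*Y)}^{\Derived(R)}\al\bigr)$; since $\iota$ is injective, this gives the injectivity of $\supp_{g(H^*Y)}^{\Derived(R)}$. Concretely, one needs the object-level identity $\iota\bigl(\supp_{g(H^*Y)}^{\Derived(R)}M\bigr)=\supp_{H^*Y}^{\Derived(Q)}(GM)$ for every $M\in\Derived(R)$.

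This identity is the crux, and I would split it into two change-of-rings steps. First, for the surjection $H^*Y\twoheadrightarrow g(H^*Y)$: the kernel of $g$ annihilates every object of $\Derived(R)$, and the Koszul objects defining the torsion subcategories are divisions by elements of $g(H^*Y)$, so one gets $\iota\bigl(\supp_{g(H^*Y)}^{\Derived(R)}M\bigr)=\supp_{H^*Y}^{\Derived(R)}M$ for the restricted $H^*Y$-action in the standard way. Second, for the forgetful functor $G\colon\Derived(R)\to\Derived(Q)$ (restriction along $Q\to R$), I would prove $G\Gamma_\pp^{\Derived(R)}M\cong\Gamma_\pp^{\Derived(Q)}(GM)$ for the $H^*Y$-action, so that conservativity of $G$ yields $\supp_{H^*Y}^{\Derived(R)}M=\supp_{H^*Y}^{\Derived(Q)}(GM)$. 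As all these functors are smashing, this reduces to the base-change identities $L_\vv^{\Derived(R)}R\simeq R\otimes_Q L_\vv^{\Derived(Q)}Q$ and $\Gamma_\vv^{\Derived(R)}R\simeq R\otimes_Q\Gamma_\vv^{\Derived(Q)}Q$ (and likewise for $L_{\zz(\pp)}$): tensoring the triangle $\Gamma_\vv Q\to Q\to L_\vv Q$ of $\Derived(Q)$ with $R$ over $Q$, the fibre $R\otimes_Q\Gamma_\vv Q$ is $\vv$-torsion in $\Derived(R)$ because $Q$ builds $R$ in $\Derived(Q)$, while the term $R\otimes_Q L_\vv Q$ is $\vv$-local in $\Derived(R)$: the $\vv$-torsion objects of $\Derived(R)$ are generated by Koszul objects of the form $R/\aa=R\otimes_Q(Q/\aa)$ pulled back from $\Derived(Q)$ (Proposition~\ref{pro: Cellular A/X objects are power torsion}), and $R\otimes_Q L_\vv Q$, being an $L_\vv^{\Derived(Q)}Q$-module hence $\vv$-local in $\Derived(Q)$ by Proposition~\ref{pro: Bousfiled localization and spec localization}, is right-orthogonal to each such object by the adjunction between $R\otimes_Q-$ and $G$.

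I expect the base-change step for the forgetful functor to be the only genuine obstacle; the surjection step and the manipulations through Corollary~\ref{cor: Bijection of localizing subcats for soci} are routine. With the identity $\iota\bigl(\supp_{g(H^*Y)}^{\Derived(R)}M\bigr)=\supp_{H^*Y}^{\Derived(Q)}(GM)$ in hand, the pieces combine to give injectivity of $\supp_{g(H^*Y)}^{\Derived(R)}$ on localizing subcategories, which together with \textbf{S1} yields \textbf{S2} and completes the proof.
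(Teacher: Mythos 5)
Your proposal is correct, but it takes a genuinely different and substantially longer route than the paper. Both arguments pivot on Corollary~\ref{cor: Bijection of localizing subcats for soci} and on the stratification of $\Derived(Q)$; the paper, however, finishes directly: for a prime $\pp'$ of $g(H^*Y)$ with preimage $\pp \subset H^*Y$, it uses the characterization of $\Im \Gamma_{\pp'}$ by $\pp'$-local and $\pp'$-torsion homotopy (\cite[Corollary 5.9]{BIKsupport}) to check that under the bijection $\Im \Gamma_{\pp'}$ corresponds to a localizing subcategory contained in $\loc_{\Derived(Q)}(R) \cap \Im \Gamma_\pp$, and then transports minimality of $\Im\Gamma_\pp$ back through the bijection -- no commutation of $\Gamma_\pp$ with the forgetful functor and no support identities are needed, only the trivial remark that $\pp'$-local/torsion over $g(H^*Y)$ implies $\pp$-local/torsion over $H^*Y$. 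You instead reformulate \textbf{S2} (given \textbf{S1}) as injectivity of $\supp_{g(H^*Y)}$ on localizing subcategories and prove the object-level identity $\iota\bigl(\supp_{g(H^*Y)}M\bigr)=\supp_{H^*Y}(GM)$ via two change-of-rings steps, the second resting on the base-change isomorphisms $\Gamma_\vv^{\Derived(R)}R \cong R\otimes_Q \Gamma_\vv^{\Derived(Q)}Q$ and $L_\vv^{\Derived(R)}R \cong R\otimes_Q L_\vv^{\Derived(Q)}Q$. This is sound and buys reusable facts (the forgetful functor preserves supports and commutes with $\Gamma_\pp$; stratification detected by injectivity of support), at the cost of extra verification. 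Two small repairs: the nonemptiness of $\supp_{g(H^*Y)}\al$ for nonzero $\al$ should be flagged as a consequence of the local-global principle you get from \textbf{S1}; and the generation of the $\vv$-torsion objects of $\Derived(R)$ by Koszul objects of the form $R\otimes_Q(Q/\aa)$, for $\aa$ a finite generating set of a prime in $\vv$, is not Proposition~\ref{pro: Cellular A/X objects are power torsion} (which treats a single sequence and power-torsion homotopy) but the corresponding result of Benson, Iyengar and Krause for specialization closed subsets in \cite{BIKsupport}; with that citation corrected your base-change step, and hence the whole argument, goes through.
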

\begin{proof}
Since the action of $g(H^*Y)$ is canonical, by~\cite[Theorem 7.2]{BIKstratifying} condition \textbf{S1} of Definition~\ref{def: Stratification} is satisfied. We are left with showing the minimality condition \textbf{S2}.

Let $\pp'$ be a prime in $g(H^*Y)$ and let $\pp \in \spec H^*Y$ be the pre-image of $\pp'$. We claim that under the bijection of Corollary~\ref{cor: Bijection of localizing subcats for soci} the localizing subcategory $\Im \Gamma_{\pp'}$ corresponds to a localizing subcategory of $\loc_{\Derived(Q)}(R) \cap \Im \Gamma_\pp$.

By~\cite[Corollary 5.9]{BIKsupport} an object $M\in \Derived(R)$ is in $\Im \Gamma_{\pp'}$ if and only if $\pi^*M$ is both $\pp'$-local and $\pp'$-torsion. Let $N \in \loc_{\Derived(Q)}(R) \cap F^{-1}(\Gamma_{\pp'})$, then $\pi^*(FN)$ is both $\pp'$-local and $\pp'$-torsion. Hence $\pi^*(R\otimes_Q N)\cong \pi^*(GFN)$ is both $\pp$-local and $\pp$-torsion. As we saw above
\[ \supp_{H^*Y} N = \supp_{H^*Y} (R\otimes_Q N) = \{ \pp \} \]
and therefore $N \in \loc_{\Derived(Q)}(R) \cap \Im\Gamma_\pp$. 

Since $\Im \Gamma_{\pp}$ is minimal or zero, then so are $\loc_{\Derived(Q)}(R) \cap \Im\Gamma_\pp$ and $\Im \Gamma_{\pp'}$.
\end{proof}

\begin{lemma}
\label{lem: X is Noetherian}
The algebra $H^*X$ is finitely generated as a $g(H^*Y)$-module and therefore Noetherian.
\end{lemma}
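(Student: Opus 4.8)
The plan is to reduce the statement to the single assertion that $H^*X$ is finitely generated as a module over $H^*Y$, and then handle the rest formally. First I would note that the $H^*Y$-module structure on $H^*X$ is by definition restriction of scalars along $g=H^*f$, so it factors through the subring $g(H^*Y)$; hence once $H^*X$ is finitely generated over $H^*Y$ it is automatically finitely generated over $g(H^*Y)$. Since $g(H^*Y)$ is a quotient of the Noetherian ring $H^*Y$ it is Noetherian, so $H^*X$ is then a Noetherian $g(H^*Y)$-module; and because every ideal of $H^*X$ is in particular a $g(H^*Y)$-submodule, $H^*X$ satisfies the ascending chain condition on ideals, i.e.\ it is a Noetherian ring. (This is precisely the finiteness hypothesis one needs in order to feed $\Derived(\chains^*X)$ into the thick-subcategory classification at the next stage of the induction.)

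The real content is the finite generation of $H^*X$ over $H^*Y$, for which I would use the fibration $S^{n}\to X\xrightarrow{f}Y$ directly. I would first check that the monodromy is trivial: $\pi_1Y$ is trivial when $k=\Q$ and a finite $p$-group when $k$ is a prime field, and in either case its action on $H^*(S^n;k)$ factors through the $k$-local degree, which acts on $H^n(S^n;k)\cong k$ through $k^\times$, and a $p$-group has trivial image in the (order prime to $p$) group $k^\times$. So the Serre spectral sequence of the fibration reads $E_2^{s,t}=H^s(Y;k)\otimes_k H^t(S^n;k)$, supported on the two rows $t=0$ and $t=n$, each isomorphic as an $H^*Y$-module to a shift of $H^*Y$. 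The only possible differential is $d_{n+1}$, which by multiplicativity is (up to sign) multiplication by the Euler class $\chi\in H^{n+1}Y$ — the transgression of the fundamental class — so the spectral sequence collapses at $E_{n+2}$. Hence $H^*X$ carries a two-step $H^*Y$-module filtration whose associated graded is $\operatorname{coker}(\cdot\chi)$ together with a shift of $\ker(\cdot\chi)$. Since $H^*Y$ is Noetherian, a quotient of $H^*Y$ and a submodule of a shift of $H^*Y$ are both finitely generated, and an extension of finitely generated modules is finitely generated; this gives the claim.

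An alternative that is closer to the ``regular element'' philosophy of this section is to quote the structural input of~\cite{GreenleesHessShamir} and~\cite{BensonGreenleesShamir}: an odd spherical fibration $S^n\to X\to Y$ yields a class $\chi\in H^{n+1}Y=\pi^{n+1}(\chains^*Y)$ and an exact triangle of $\chains^*Y$-modules $\Sigma^{-(n+1)}\chains^*Y\xrightarrow{\ \chi\ }\chains^*Y\to\chains^*X$. Applying $\pi^*$ to this triangle produces a short exact sequence of $H^*Y$-modules exhibiting $H^*X$ as an extension of a shift of $\ker(\cdot\chi)$ by $\operatorname{coker}(\cdot\chi)$, and one concludes exactly as before.

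The one point I expect to need care is the triviality of the local coefficient system, together with the availability and convergence of the Serre spectral sequence for these $k$-local finite-type spaces; granting that, everything else is routine bookkeeping with finitely generated modules over the Noetherian ring $H^*Y$. If one prefers to avoid this point, the triangle-based argument sidesteps it entirely, at the cost of leaning more on~\cite{GreenleesHessShamir,BensonGreenleesShamir}.
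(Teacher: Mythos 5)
Your proposal is correct and follows essentially the same route as the paper: the paper also runs the Serre spectral sequence of $S^n\to X\to Y$, argues the coefficient action is trivial (there via the fact that $k[\pi_1Y]$ has the unique simple module $k$, which amounts to your observation that a finite $p$-group maps trivially to $k^\times$), and then deduces from the two-row collapse that $H^*X$ is a finitely generated module over the Noetherian ring $H^*Y$, hence Noetherian. Your extra bookkeeping (factoring through $g(H^*Y)$, the Euler-class differential, and the triangle-based alternative) only makes explicit what the paper leaves implicit.
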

\begin{proof}
Consider the Serre spectral sequence for the fibration $S^n \longrightarrow X \xrightarrow{\ f \ } Y$. The $E_2$-term of this spectral sequence is the cohomology of $Y$ with local coefficients in the cohomology of $S^n$. If $k=\Q$ then $Y$ is simply connected. Otherwise, since $\pi_1Y$ is a finite $p$-group, the group-algebra $k[\pi_1Y]$ has a unique simple module $k$. We see that in both cases the action of $\pi_1 Y$ on $H^*(S^n)$ is trivial and therefore the $E_2$-term is $H^p(Y;H^q(S^n))$.

We see that the spectral sequence collapse at a finite stage and yields a short exact sequence
\[ g(H^*Y) \to H^*X \to J\]
where $J$ is an $g(H^*Y)$-module which is also a submodule of a free $H^*Y$-module. Since $H^*Y$ is Noetherian we see that $H^*X$ is a finitely generated $H^*Y$-module and therefore Noetherian as a ring.
\end{proof}

We also need the following general result.
\begin{proposition}
Let $A$ be a commutative $\sphere$-algebra. Suppose there are homomorphisms of graded commutative rings $S' \to S \to \pi^*A$ and that $S$ and $S'$ are Noetherian. If the canonical action of $S'$ stratifies $\Derived(A)$ then so does the canonical action of $S$.
\end{proposition}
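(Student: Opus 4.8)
The plan is to verify conditions \textbf{S1} and \textbf{S2} of Definition~\ref{def: Stratification} for the canonical action of $S$ on $\Derived(A)$. Condition \textbf{S1} costs nothing: the action of $S$ is canonical, hence the local-global principle holds by~\cite[Theorem 7.2]{BIKstratifying} (and $S$ being Noetherian is what lets us form the functors $\Gamma_\pp$, $L_\vv$, $\Gamma_\vv$ for the $S$-action in the first place). So all the effort is spent on \textbf{S2}: for every $\pp\in\spec S$ the localizing subcategory $\Im\Gamma_\pp$, formed with respect to $S$, is zero or minimal.

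Write $\phi\colon S'\to S$ for the given homomorphism and, for $\pp\in\spec S$, put $\pp'=\phi^{-1}(\pp)\in\spec S'$. I will write $\Gamma_{\pp'}$ and $M_{\pp'}=L_{\zz(\pp')}M$ for the corresponding functors built from the $S'$-action; since $\pp'$ names a prime of $S'$ this is unambiguous. The one thing to prove is the inclusion $\Im\Gamma_\pp\subseteq\Im\Gamma_{\pp'}$. Granting it, the proposition follows at once: $\Derived(A)$ is stratified by $S'$, so $\Im\Gamma_{\pp'}$ is zero or minimal, while $\Im\Gamma_\pp$ is a localizing subcategory of $\Derived(A)$ contained in it; a nonzero localizing subcategory of a minimal one equals it, so $\Im\Gamma_\pp$ is again zero or minimal, which is \textbf{S2}.

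To get the inclusion I would take $M\in\Im\Gamma_\pp$ and invoke~\cite[Corollary 5.9]{BIKsupport} at both ends. On one side, that result says $\pi^*M$ is $\pp$-local and $\pp$-torsion over $S$: every element of $S\setminus\pp$ acts invertibly on $\pi^*M$, and $(\pi^*M)_\qq=0$ whenever $\pp\not\subseteq\qq$. On the other side, to conclude $M\in\Im\Gamma_{\pp'}$ it suffices to check that $\pi^*M$ is $\pp'$-local and $\pp'$-torsion over $S'$. Locality is easy, since $\phi(S'\setminus\pp')\subseteq S\setminus\pp$: every element of $S'\setminus\pp'$ then acts invertibly on $\pi^*M$, so $\pi^*M\cong(\pi^*M)_{\pp'}$, and hence $M\cong M_{\pp'}$ because $\pi^*(M_{\pp'})=(\pi^*M)_{\pp'}$ and $A$ is a compact generator, so $\pi^*$ detects isomorphisms. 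Torsion is the substantive step: given $\qq'\in\spec S'$ with $\pp'\not\subseteq\qq'$, one views $(\pi^*M)_{\qq'}$ as the localization of the $S$-module $\pi^*M$ at the multiplicative set $\phi(S'\setminus\qq')$, in which a prime $\qq$ of $S$ survives precisely when $\phi^{-1}(\qq)\subseteq\qq'$; such a $\qq$ cannot lie in $\supp_S(\pi^*M)$, for then $\pp\subseteq\qq$ would force $\pp'=\phi^{-1}(\pp)\subseteq\phi^{-1}(\qq)\subseteq\qq'$, contrary to the choice of $\qq'$. So $(\pi^*M)_{\qq'}$ has empty support and therefore vanishes.

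I expect the torsion computation to be the only nontrivial point: transferring $\vv(\pp)$-torsion over $S$ to $\vv(\pp')$-torsion over $S'$. It is, however, just standard commutative algebra about the behaviour of support and localization under a ring homomorphism, together with the fact that a module with empty support is zero. Everything else is formal manipulation of the localization and colocalization functors recalled in Section~\ref{sec: Stratification of a derived category} and the homotopy-theoretic description of $\Im\Gamma_\pp$ from~\cite[Corollary 5.9]{BIKsupport}.
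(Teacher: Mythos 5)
Your proof is correct and takes essentially the same route as the paper: verify \textbf{S1} via the canonical-action observation, reduce \textbf{S2} to the inclusion $\Im\Gamma_\pp\subseteq\Im\Gamma_{\pp'}$ using~\cite[Corollary 5.9]{BIKsupport}, and conclude by minimality of $\Im\Gamma_{\pp'}$. The only difference is presentational: where the paper declares the transfer of $\pp$-locality and $\pp$-torsion to $\pp'$-locality and $\pp'$-torsion ``obvious,'' you carry out the commutative-algebra bookkeeping (multiplicative sets, survival of primes, empty support forces vanishing) explicitly, which is a harmless and arguably helpful elaboration.
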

\begin{proof}
As noted earlier, since the action of $S$ is canonical condition \textbf{S1} is satisfied. We turn to showing the minimality condition \textbf{S2} for the action of $S$.

Let $\pp \subset S$ be a prime ideal and let $\pp' \subset S'$ be the preimage of $\pp$. We will show that $\Im \Gamma_\pp \subset \Im \Gamma_{\pp'}$. For any object $M \in \Derived$ \cite[Corollary 5.8]{BIKsupport} implies that $M\in \Im \Gamma_{\pp}$ if and only if $\pi^*M$ is both $\pp$-local and $\pp$-torsion. It is obvious that if $\pi^*M$ is $\pp$-torsion then it is $\pp'$-torsion. Similarly if $M$ is $\pp$-local then it is also $\pp'$-local. Hence $\Im \Gamma_\pp \subset \Im \Gamma_{\pp'}$ and therefore $\Im \Gamma_\pp$ is minimal or zero.
\end{proof}

We have just proved the inductive step detailed below.
\begin{corollary}
\label{cor: Induction step for soci}
Let $f:X \longrightarrow Y$ be a map of EM-type spaces whose homotopy fibre is an odd simply-connected sphere and suppose that $\Derived(\chains^*Y)$ is stratified by the natural action of $H^*(Y)$ and that $H^*(Y)$ is Noetherian. Then $\Derived(\chains^*X)$ is stratified by the natural action of $H^*X$.
\end{corollary}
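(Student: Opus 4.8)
The plan is to assemble the pieces already in place. Write $R=\chains^*X$ and $Q=\chains^*Y$, and let $g=H^*f\colon H^*Y\to H^*X$ be the induced map of graded rings, with image the subring $g(H^*Y)\subseteq H^*X=\pi^*R$. The proof proceeds in three moves: first stratify $\Derived(R)$ by the small ring $g(H^*Y)$, then check the Noetherian bookkeeping, and finally enlarge the acting ring from $g(H^*Y)$ to all of $H^*X$.

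\emph{Step 1: $\Derived(R)$ is stratified by the canonical action of $g(H^*Y)$.} The key structural input is that, since the homotopy fibre of $f$ is an odd simply-connected sphere, $R$ over $Q$ behaves like a Koszul object: $R$ is finitely built by $Q$ in $\Derived(Q)$ and, conversely, every object $M\in\loc_{\Derived(Q)}(R)$ builds $R\otimes_Q M$ in $\Derived(Q)$. Combined with the assumed stratification of $\Derived(Q)$ by $H^*Y$ and the support formula $\supp_{H^*Y}(R\otimes_Q M)=\supp_{H^*Y}R\cap\supp_{H^*Y}M$ (valid because $H^*Y$ is Noetherian), this produces a support-respecting bijection between localizing subcategories of $\Derived(R)$ and localizing subcategories of $\loc_{\Derived(Q)}(R)$, implemented by the forgetful functor $G$ and its adjoint $F=R\otimes_Q-$. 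Condition \textbf{S1} for $g(H^*Y)$ is automatic since the action is canonical. For \textbf{S2}, given a prime $\pp'$ of $g(H^*Y)$ with preimage $\pp\in\spec H^*Y$, one checks that $\Im\Gamma_{\pp'}$ corresponds under the bijection to a localizing subcategory of $\loc_{\Derived(Q)}(R)\cap\Im\Gamma_{\pp}$: an object in $F^{-1}(\Im\Gamma_{\pp'})\cap\loc_{\Derived(Q)}(R)$ has $\pp$-local, $\pp$-torsion image under $GF$, and the building relations force its $H^*Y$-support to be exactly $\{\pp\}$. Since $\Im\Gamma_{\pp}$ is minimal or zero by hypothesis, so are $\loc_{\Derived(Q)}(R)\cap\Im\Gamma_{\pp}$ and hence $\Im\Gamma_{\pp'}$.

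\emph{Step 2: $H^*X$ is Noetherian.} Run the Serre spectral sequence of $S^n\to X\xrightarrow{f}Y$. Since $\pi_1 Y$ is trivial (when $k=\Q$) or a finite $p$-group (so $k$ is its unique simple $k[\pi_1 Y]$-module), the coefficient system on $H^*(S^n)$ is trivial, the spectral sequence collapses at a finite stage, and one obtains a short exact sequence $g(H^*Y)\to H^*X\to J$ with $J$ a submodule of a free $H^*Y$-module. As $H^*Y$ is Noetherian, $H^*X$ is a finitely generated $g(H^*Y)$-module, hence Noetherian as a ring. \emph{Step 3:} now apply the general transfer principle to the ring maps $g(H^*Y)\to H^*X\to\pi^*R$, both source rings Noetherian: since the canonical action of $g(H^*Y)$ stratifies $\Derived(R)$ by Step 1, so does the canonical action of $H^*X$. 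Indeed \textbf{S1} is again free from canonicity, and for a prime $\pp$ of $H^*X$ over $\pp'$ of $g(H^*Y)$ one has $\Im\Gamma_{\pp}\subseteq\Im\Gamma_{\pp'}$ (a $\pp$-local $\pp$-torsion module is $\pp'$-local and $\pp'$-torsion), so $\Im\Gamma_{\pp}$ is minimal or zero.

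The main obstacle is Step 1, and within it the transfer of \textbf{S2} across the correspondence $\Derived(R)\leftrightarrow\loc_{\Derived(Q)}(R)$: one must pin down that applying $R\otimes_Q-$ to a $\pp'$-local $\pp'$-torsion object yields something whose $H^*Y$-support is precisely $\{\pp\}$, not merely contained in $\vv(\pp)$. This is exactly where the hypothesis that the fibre is an odd sphere is essential — it is what guarantees the mutual building of $M$ and $R\otimes_Q M$ over $Q$ — and where the Noetherian hypothesis on $H^*Y$ enters, through the intersection formula for supports. Everything else is formal manipulation of localization and colocalization functors.
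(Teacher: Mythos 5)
Your proposal is correct and follows essentially the same route as the paper: you stratify $\Derived(\chains^*X)$ by the image subring $g(H^*Y)$ via the mutual-building correspondence between localizing subcategories of $\Derived(R)$ and of $\loc_{\Derived(Q)}(R)$, establish Noetherianity of $H^*X$ through the Serre spectral sequence, and then pass from $g(H^*Y)$ to $H^*X$ by the general transfer proposition for ring maps $S'\to S\to\pi^*R$. The points you flag as delicate (the support intersection formula needing $H^*Y$ Noetherian, and the odd-sphere fibre guaranteeing that $M$ and $R\otimes_Q M$ build each other over $Q$) are exactly the ingredients the paper isolates in its preparatory lemmas.
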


\begin{proof}[Proof of Theorem~\ref{the: Stratification of cochains on soci}]
Let $X$ be a soci space, hence there is a simply connected space $B$ such that $H^*B$ is a graded polynomial ring on finitely many even degree generators and there are odd spherical fibrations
\[ S^{n_1} \longrightarrow X_1 \longrightarrow X_0=B, \quad S^{n_2} \longrightarrow X_2 \longrightarrow X_1,\ \ldots \ , \quad S^{n_c} \longrightarrow X_c \longrightarrow X_{c-1} \]
with $X=X_c$. We proceed by induction to show that $H^*X_i$ is Noetherian of finite Krull dimension and that $\Derived(\chains^*X_i)$ is stratified by the natural action of $H^*X_i$.

The induction base is Theorem~\ref{thm: Stratification of algebra with polynomial homotopy} together with the fact that $H^*B$ is graded polynomial on finitely many elements and hence Noetherian and of finite Krull dimension. The induction step is simply Lemma~\ref{lem: X is Noetherian} together with Corollary~\ref{cor: Induction step for soci}.
\end{proof}

\section{Finitely fibred spaces over $B$}
\label{sec: Finitely fibred spaces over B}

In this section spaces are simplicial sets whose category is denoted by $\spaces$. This choice enables us to employ Bousfield's localization of spaces with respect to a homology theory. However, this conflicts with the cochains functor defined earlier, since the cochains functor requires topological spaces as input. Hence we must first translate a simplicial set into a topological space via the geometric realization functor before applying the cochains functor. This will be done implicitly throughout without any further remark.

\subsection*{An overview}
For the rest of this section we fix a field $k$ and an EM-type space $B$. Recall that given a map $f:X \longrightarrow B$ we consider $\chains^*X$ as an object of $\Derived(\chains^*B)$ via the map of $\sphere$-algebras $\chains^*f$. Given a map $g:Y \longrightarrow X$ it is a simple observation that
\[ \loc_{\Derived(\chains^*B)} (\chains^*Y) \subset \loc_{\Derived(\chains^*B)} (\chains^*X)\]
where $\chains^*Y$ is an object of $\Derived(\chains^*B)$ via the composition $fg$.

We will be more interested in EM-type spaces with a given map to $B$ whose cochain algebras are compact objects in $\Derived(\chains^*B)$. Such spaces will be called finitely fibred spaces (Definition~\ref{def: Finitely fibred spaces}). To a finitely fibred space $X$ we will assign the thick subcategory $\Psi(X)=\thick_{\Derived(\chains^*B)}(\chains^*X)$. It follows from the observation above that there is a natural partial order on finitely fibred spaces which makes this assignment into a map of partially ordered sets (Lemma~\ref{lem: Map from finitely fibred poset}). The partially ordered set of finitely fibred spaces will be denoted by $\ffpos/B$ (Definition~\ref{def: Unreduced poset ffsb}).

We will not analyze the image $\Psi$. We will, however, describe the fibres of $\Psi$ when $\Derived(\chains^*B)$ is stratified by the action of $H^*B$; this will be done implicitly in Theorem~\ref{the: Reduced finitely fibred spaces}. To give this description we will define an equivalence relation on $\ffpos/B$ whose equivalence classes yield a new partially ordered set: $\overline{\ffpos/B}$ -- the reduced partially ordered set of finitely fibred spaces. Theorem~\ref{the: Reduced finitely fibred spaces} shows that $\Psi$ splits through $\overline{\ffpos/B}$ and that the induced map from $\overline{\ffpos/B}$ to thick subcategories of $\Derived(\chains^*B)^\compact$ is an injection. Hence the fibres of $\Psi$ are the equivalence classes of $\ffpos/B$ which give $\overline{\ffpos/B}$.

\subsection*{Spaces over a base space}
The category of spaces over $B$, denoted $\spaces/B$ has for objects maps $f:X \longrightarrow B$. A morphism
\[ \phi:(X \xrightarrow{f} B) \to (Y \xrightarrow{g} B)\]
in $\spaces/B$ is a map of spaces $\phi:X \to Y$ such that $g\phi=f$. Though the map $f:X \longrightarrow B$ is an important part of the structure, we will usually denote this object of $\spaces/B$ simply by its underlying space $X$, with the understanding that the map $f$ is implied.

Given a model category structure on the category of spaces $\spaces$ we can lift it to a model category structure on $\spaces/B$, see~\cite{DwyerSpalinky}. The model category on $\spaces$ we shall use is the $k$-local one, in other words we $k$-localize the usual model category structure on spaces as done by Bousfield in \cite{BousfieldLocSpaces}. The main point here is that under the local model category structure the weak equivalences are maps which induce isomorphism on homology with coefficients in $k$. The space $B$ is assumed to be fibrant in the localized model category structure.

Thus we have a model category structure on $\spaces/B$ in which the weak equivalences are $k$-homology isomorphisms. The homotopy category of this model category structure will be denoted by $\homot^k \spaces/B$.

\begin{definition}
\label{def: Finitely fibred spaces}
A \emph{$k$-finitely fibred space over $B$} is an object $f:X \longrightarrow B$ of $\homot^k \spaces/B$ such that $X$ is an EM-type space and $\chains^*X$ is a compact object in $\Derived(\chains^*B)$. Note that we consider $\chains^*X$ as a $\chains^*B$-module via the map of $\sphere$-algebras $\chains^*(f)$.
\end{definition}

\begin{remark}
We should justify the name 'finitely fibred space'. For a map $f:X \longrightarrow B$ denote by $F(f)$ the homotopy fibre of $f$, considered as an $\Omega B$-space. Since
\[ \chains^*F(f) \sim \chains^*X \otimes_{\chains^*B} k\]
we see that if $f:X \longrightarrow B$ is a $k$-finitely fibred space over $B$ then $H^*F(f)$ is a finite dimensional graded $k$-module. The opposite is also true: if $H^*F(f)$ is of finite dimension then $f$ is a $k$-finitely fibred space over $B$ -- this is shown in \cite[Lemma 2.10]{DwyerWilkersonFundamental} for the prime case and the rational case can be proved by similar arguments.
\end{remark}

\subsection*{The partially ordered set of finitely fibred spaces}
We begin by defining the unreduced version of the poset of finitely fibred spaces over $B$.
\begin{definition}
\label{def: Unreduced poset ffsb}
Given $X$ and $Y$ in $\homot^k \spaces/B$ we say that $X \preceq Y$ if there is a morphism $X \to Y$ in $\homot^k \spaces/B$. This relation is clearly reflexive and transitive. If $X\preceq Y$ and $Y\preceq X$ then we say that $X$ is \emph{congruent (over $B$)} to $Y$ and denote this by $X\approx_B Y$. The \emph{poset of finitely fibred spaces over $B$}, denoted $\ffpos/B$, has for elements congruence classes of finitely fibred spaces in $\homot^k \spaces/B$ and the relation $\preceq$ gives the partial order on $\ffpos/B$.
\end{definition}

In what follows we denote by $R$ the $\sphere$-algebra $\chains^*B$ and we denoted by $\Derived$ the derived category of $R$.

\begin{lemma}
\label{lem: Map from finitely fibred poset}
There is a map of partially ordered sets
\[ \Psi: \ffpos/B \longrightarrow
   \left\{
     \begin{array}{c}
        \text{Thick subcategories}\\
        \text{of }\Derived^\compact\\
     \end{array}
   \right\}
\]
given by $\Psi(f:X \longrightarrow B) = \thick_\Derived \chains^*X$.
\end{lemma}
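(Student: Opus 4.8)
The plan is to verify two things: that the assignment $\Psi$ is well-defined on congruence classes, and that it is order-preserving. Both follow from a single observation already present in the overview to this section. First I would recall that if $\phi:(X\xrightarrow{f}B)\to(Y\xrightarrow{g}B)$ is a morphism in $\homot^k\spaces/B$, then $\phi$ induces a map of $\sphere$-algebras $\chains^*\phi:\chains^*Y\to\chains^*X$ which is compatible with the $\chains^*B$-module structures coming from $\chains^*f$ and $\chains^*g$ (here one uses $g\phi=f$, so $\chains^*f=\chains^*\phi\circ\chains^*g$). Consequently $\chains^*X$, viewed in $\Derived=\Derived(\chains^*B)$, is the image of $\chains^*Y$ under a morphism of $\chains^*B$-modules, so $\chains^*X\in\loc_\Derived(\chains^*Y)$, and since both objects are compact, $\chains^*X\in\thick_\Derived(\chains^*Y)$ by the identity $\thick_\Derived(\class)=\loc_\Derived(\class)\cap\Derived^\compact$ recalled in Section~\ref{sec: Stratification of a derived category}. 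Therefore $\thick_\Derived\chains^*X\subseteq\thick_\Derived\chains^*Y$ whenever $X\preceq Y$.

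Granting this, well-definedness is immediate: if $X\approx_B Y$, i.e. $X\preceq Y$ and $Y\preceq X$, then $\thick_\Derived\chains^*X\subseteq\thick_\Derived\chains^*Y$ and $\thick_\Derived\chains^*Y\subseteq\thick_\Derived\chains^*X$, so the two thick subcategories coincide and $\Psi$ descends to a function on congruence classes. (One should also note in passing that for a finitely fibred space $X$ the object $\chains^*X$ is by definition compact, so $\thick_\Derived\chains^*X$ is a legitimate element of the target, and that the relation $\preceq$ is reflexive and transitive as observed in Definition~\ref{def: Unreduced poset ffsb}, so $\ffpos/B$ really is a poset.) Order-preservation is then exactly the displayed inclusion: $X\preceq Y$ in $\ffpos/B$ implies $\Psi(X)\subseteq\Psi(Y)$.

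I do not expect a genuine obstacle here; the statement is essentially a bookkeeping lemma. The one point that requires a little care — and is really the only substantive step — is checking that a morphism $\phi$ in the \emph{homotopy} category $\homot^k\spaces/B$ (not on the nose a map of spaces over $B$) still induces a well-defined map $\chains^*Y\to\chains^*X$ of $\chains^*B$-modules in the derived category; this is where one invokes that $\chains^*(-)$ is a homotopy functor sending $k$-homology equivalences to equivalences of $\sphere$-algebras, so it factors through $\homot^k\spaces$, and that the base-point structure maps are respected up to the appropriate homotopy. Once that is in hand, the rest is the formal argument above.
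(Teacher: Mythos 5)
Your overall route is the same as the paper's: reduce everything to the observation that $X \preceq Y$ forces $\thick_\Derived(\chains^*X) \subseteq \thick_\Derived(\chains^*Y)$, and then well-definedness on congruence classes and order-preservation are formal. However, the justification you give for that one substantive step is not valid as written. You argue that since there is a morphism of $\chains^*B$-modules $\chains^*Y \to \chains^*X$, the object $\chains^*X$ lies in $\loc_\Derived(\chains^*Y)$. The mere existence of a morphism never places the target in the localizing subcategory generated by the source (any object receives maps from any other object, e.g.\ the zero map, and even a nonzero module map gives no such membership). What does the work --- and what the paper uses --- is the multiplicative structure: after replacing $X$ cofibrantly and $Y$ fibrantly in $\spaces/B$, the morphism in $\homot^k\spaces/B$ is represented by an honest map $\phi:X'\to Y'$ over $B$, and $\chains^*\phi$ is then a map of $\sphere$-algebras under $\chains^*B$. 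This makes $\chains^*X'$ a $\chains^*Y'$-module, and every $\chains^*Y'$-module is built by $\chains^*Y'$ in $\Derived(\chains^*B)$, because $\chains^*Y'$ generates $\Derived(\chains^*Y')$ as a localizing subcategory and the forgetful functor to $\Derived(\chains^*B)$ is exact and preserves coproducts. You do mention that $\chains^*\phi$ is an algebra map, so the needed ingredient appears in your text, but the inference you actually draw (``image under a module morphism, hence in the localizing subcategory'') is false and must be replaced by this module-over-an-algebra argument; compactness and Neeman's identity $\thick_\Derived(\class)=\loc_\Derived(\class)\cap\Derived^\compact$ then finish as you say.

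Relatedly, your treatment of lifting a morphism of $\homot^k\spaces/B$ to the point-set level is too weak for this purpose: producing merely a map $\chains^*Y\to\chains^*X$ ``of $\chains^*B$-modules in the derived category'' would not suffice, since the building step needs an actual $\sphere$-algebra map (equivalently, a genuine $\chains^*Y$-module structure on $\chains^*X$), not just a derived-category morphism. The paper's device is exactly the cofibrant/fibrant replacement in $\spaces/B$, so that the homotopy class is represented by a strict map over $B$, together with the observation that zig-zags of $k$-equivalences become isomorphisms in $\Derived(\chains^*B)$ after applying $\chains^*$; this also handles well-definedness on isomorphism classes in $\homot^k\spaces/B$. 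The remaining bookkeeping in your write-up (compactness of the cochain algebras, reflexivity and transitivity of $\preceq$, descent to congruence classes, order preservation) is correct and matches the paper.
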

\begin{proof}
If $X$ is isomorphic to $Y$ in $\homot^k \spaces/B$ then there is a finite sequence of $k$-equivalences connecting $X$ and $Y$ in $\spaces/B$. Upon taking cochains these maps become weak equivalences of $\chains^*B$-modules and therefore $\chains^*X$ is isomorphic to $\chains^*Y$ in $\Derived$.

If $X \preceq Y$ then there is a morphism $\phi:X' \to Y'$ in $\spaces/B$ where $X'\cong X$ and $Y' \cong Y$ in $\homot^k \spaces/B$. To see this one need only take $X'$ to be a cofibrant replacement of $X$ and take $Y'$ yo be a fibrant replacement of $Y$. Thus we get a map $\phi:\chains^*(Y') \to \chains^*(X')$ of $\sphere$-algebras over $\chains^*(B)$. Hence $\chains^*(X')$ is a $\chains^*(Y')$-module and therefore $\chains^*(Y')$ builds $\chains^*(X')$ over $\chains^*(B)$. Since both $\chains^*(X')$ and $\chains^*(Y')$ are compact, we see that $\chains^*(Y')$ finitely builds $\chains^*(X')$ over $\chains^*(B)$. Therefore $\thick_\Derived(\chains^*X) \subseteq \thick_\Derived(\chains^*Y)$. This in particular implies that if $X \approx_B Y$ then $\thick_\Derived(\chains^*X) = \thick_\Derived(\chains^*Y)$.
\end{proof}

\subsection*{The reduced partially ordered set of finitely fibred spaces}
Recall that for a finitely fibred space $X$ over $B$, we defined $\po X$ to be the homotopy pushout $X \cup_{X \times_B X} X$. Note that we are assuming, without loss of generality, that $f$ is a fibration of spaces, so $X \times_B X$ has the correct homotopy type. It is easy to see that $\po X$ is naturally a space over $B$. The next lemma shows that $\po X$ is in fact a finitely fibred space over $B$. Recall the algebras $R/I^{\otimes n}$ were defined in Definition~\ref{def: R mod I tensor n}.

\begin{lemma}
\label{lem: Topological realization of R/Isquared}
Let $X$ be finitely fibred space over $B$, let $A$ denote the $R$-algebra $\chains^*X$ and let $I$ be defined by the exact triangle $I \to R \to A$. There is an isomorphism in $\Derived$:
\[ \chains^*(\po X) \cong R/I^{\otimes 2}\]
which commutes with the unit morphisms $R \to R/I^{\otimes 2}$ and $R \to \chains^*(\po X)$.
\end{lemma}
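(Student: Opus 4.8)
The plan is to apply the cochains functor to the defining homotopy pushout square and recognize the resulting homotopy pullback. Assuming, as we may, that $f$ is a fibration, the space $X\times_B X$ has the correct homotopy type and $\po X = X\cup_{X\times_B X}X$ is the homotopy pushout along the two projections $\mathrm{pr}_1,\mathrm{pr}_2\colon X\times_B X\to X$, naturally a space over $B$. The functor $\chains^*(-)=F_\sphere(\Sigma^\infty(-)_+,Hk)$ sends homotopy pushouts of spaces to homotopy pullbacks, and since everything in sight carries compatible $R=\chains^*B$-module structures (and the forgetful functor to $Hk$-modules detects homotopy pullbacks), this is a homotopy pullback in $\Derived$:
\[ \chains^*(\po X)\ \simeq\ \chains^*X\ \times^{h}_{\,\chains^*(X\times_B X)}\ \chains^*X .\]
By the Eilenberg--Moore equivalence recalled in Section~\ref{sec: Spaces and cochains}, $\chains^*(X\times_B X)\simeq A\otimes_R A$, and under this identification $\mathrm{pr}_1^*$ and $\mathrm{pr}_2^*$ become the two unit maps $\eta_1=1_A\otimes u$ and $\eta_2=u\otimes 1_A$ from $A$ to $A\otimes_R A$. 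So $\chains^*(\po X)$ is the homotopy pullback of $A\xrightarrow{\eta_1}A\otimes_R A\xleftarrow{\eta_2}A$.

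Next I would simplify this homotopy pullback using that $\eta_1$ and $\eta_2$ are both split by the multiplication $m\colon A\otimes_R A\to A$. Writing the pullback as $\mathrm{fib}\big((\eta_1,-\eta_2)\colon A\oplus A\to A\otimes_R A\big)$ and changing coordinates on the source by $(x,y)\mapsto(x-y,y)$, the map becomes $(\eta_1,d)$ with $d=\eta_1-\eta_2$. As $md=0$, the map $d$ factors through $J:=\mathrm{fib}(m)$; moreover $\eta_1$ is a split monomorphism whose cofibre is $\Sigma(I\otimes_R A)$ (tensor the triangle $I\to R\to A$ with $A$), and the composite $J\hookrightarrow A\otimes_R A\to\mathrm{cofib}(\eta_1)$ is an isomorphism. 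Under the resulting splitting $A\otimes_R A\cong A\oplus\Sigma(I\otimes_R A)$ the map $(\eta_1,d)$ is block diagonal, so
\[ \chains^*(\po X)\ \simeq\ \mathrm{fib}\big(\bar d\colon A\longrightarrow\Sigma(I\otimes_R A)\big),\]
where $\bar d$ is $d$ followed by the cofibre projection; concretely $\bar d= -\,c\circ\eta_2$ with $c\colon A\otimes_R A\to\Sigma(I\otimes_R A)$ the connecting map, and since $c=\delta\otimes 1_A$ for $\delta\colon A\to\Sigma I$ the connecting map of $I\to R\to A$, bifunctoriality of $\otimes_R$ rewrites $\bar d$ as a tensor $\delta\otimes u$ up to sign.

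Finally, specialising the map of exact triangles displayed just before Theorem~\ref{thm: Colocalization as hom R/Itensorn} to $n=1$ yields an exact triangle $I\otimes_R A\to R/I^{\otimes 2}\xrightarrow{a}A\xrightarrow{w}\Sigma(I\otimes_R A)$, so that $R/I^{\otimes 2}\simeq\mathrm{fib}(w)$. It then suffices to identify $w$ with $\bar d$ up to a unit: an application of the octahedral axiom to $I\otimes_R I\to I\to R$ exhibits $w$ as $\Sigma(1_I\otimes u)\circ\delta$, which by the same bifunctoriality is $\delta\otimes u$, agreeing with $\bar d$ up to sign. Since $\mathrm{fib}(w)$ and $\mathrm{fib}(-w)$ are canonically isomorphic, this gives the isomorphism $\chains^*(\po X)\cong R/I^{\otimes 2}$; by construction it respects the two maps down to $A$ (the structure map $a$, respectively the one induced by an inclusion $X\hookrightarrow\po X$), and since both unit maps are the compatible lifts of $u\colon R\to A$ through the respective fibres — using $\delta\circ u=0$ as consecutive morphisms in $I\to R\to A$ — the isomorphism can be chosen to commute with the unit morphisms. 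The delicate point, and the place where one must keep track of signs and of the octahedral bookkeeping, is precisely this last matching of the connecting maps $w$ and $\bar d$; everything else is formal.
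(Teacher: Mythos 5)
Your argument is correct in substance, and its first half coincides with the paper's: apply $\chains^*$ to the defining homotopy pushout square and use the Eilenberg--Moore identification $\chains^*(X\times_B X)\simeq A\otimes_R A$ to exhibit $\chains^*(\po X)$ as the homotopy pullback of $A\xrightarrow{1\otimes u}A\otimes_R A\xleftarrow{u\otimes 1}A$. Where you genuinely diverge is in identifying that pullback with $R/I^{\otimes 2}$. The paper works in the model category of $R$-modules and shows directly that the square with corners $R/I^{\otimes 2},A,A,A\otimes_R A$ is a homotopy pushout (hence, for $R$-modules, a homotopy pullback), via the exact triangle $I\sqcup_{I\otimes_R I}I\to R\to A\sqcup_{R/I^{\otimes 2}}A$ and the standard equivalence $A\sqcup_{R/I^{\otimes 2}}A\simeq A\otimes_R A$. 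You instead stay inside the triangulated category: split the pullback using the retraction $m$ of $1\otimes u$, reduce to the fibre of $-c\circ(u\otimes 1)$, and match that map, via the octahedral axiom applied to $I\otimes_R I\to I\to R$, with the connecting map whose fibre is $R/I^{\otimes 2}$. This is a legitimate alternative; it makes explicit where the defining triangle of $R/I^{\otimes 2}$ enters and avoids the model-category pushout computation, at the cost of the sign and twist bookkeeping you acknowledge.

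The one step that is under-justified is the final claim that the isomorphism ``can be chosen'' to commute with the unit morphisms merely because both units lift $u\colon R\to A$ through the respective fibres. A fill-in between two fibre sequences over $1_A$ is not canonical, and two lifts of $u$ along $\mathrm{fib}(w)\to A$ differ by an element coming from $\hom_\Derived(R,I\otimes_R A)=\pi^0(I\otimes_R A)$, so ``both are lifts'' does not by itself give the compatibility. The gap is easy to close: from the triangle $I\otimes_R A\to A\to A\otimes_R A$ one gets the exact sequence $\pi^{-1}(A\otimes_R A)\to\pi^0(I\otimes_R A)\to\pi^0(A)\to\pi^0(A\otimes_R A)$, and since $H^{-1}(X\times_B X)=0$ while $H^0(X)=k\to H^0(X\times_B X)$ is injective (a unital map from a field to a nonzero ring), we get $\pi^0(I\otimes_R A)=0$; hence the lift of $u$ is unique and any isomorphism of the two fibres compatible with the maps to $A$ automatically matches the units. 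The paper avoids this issue altogether by noting that both homotopy pullback squares are squares of modules under $R$ in the model category, so its comparison map is a map under $R$ by construction -- that is what its formulation buys and what your purely triangulated route has to pay for with the extra uniqueness argument.
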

\begin{proof}
In this proof we work in the model category of $R$-modules instead of $\Derived$; this is because we need to use homotopy pushouts. Without loss of generality we also assume that $A$ is a cofibrant $R$-module.

Denote the morphism $I \to R$ by $i$ and let $A'$ denote $R/I^{\otimes 2}$. There are, a priori, two maps $A' \to A$ we consider, these are induced as maps on cones:
\[ \xymatrixcompile{
{I} \ar[r]^i & {R} \ar[r] & {A}\\
{I\otimes_R I} \ar[r]^-{i\otimes i} \ar[u]^{1\otimes i} \ar[d]_{1\otimes i} & {R} \ar[r] \ar[u]^= \ar[d]_=& {A'} \ar@{-->}[d]_{b_1} \ar@{-->}[u]^{b_2} \\
{I} \ar[r]^i & {R} \ar[r] & {A}
}\]

Given maps of $R$-modules $X \to Y \leftarrow Z$ we denote by $X \sqcup_Y Z$ the homotopy pushout of these maps. It is a simple matter to see the diagram above yields an exact triangle of homotopy pushouts
\[ I \sqcup_{I\otimes_R I} I \to R \to A \sqcup_{A'} A\]
It is also a standard that $A \sqcup_{A'} A \simeq A \otimes_R A$. Since homotopy pushout squares are also homotopy bullbacks for $R$-modules, we see there is a homotopy pullback square of $R$-modules
\[ \xymatrixcompile{
{A'} \ar[r]^{b_2} \ar[d]^{b_1} & {A} \ar[d] \\ {A} \ar[r] & {A\otimes_R A}
}\]
On the other hand, applying the cochains functor to the homotopy pushout square
\[ \xymatrixcompile{
{\po X} & {X} \ar[l] \\ {X} \ar[u] & {X\times_B X} \ar[u] \ar[l]
}\]
yields a homotopy pullback square of $R$-modules. Since $\chains^*(X\times_B X) \cong \chains^*X \otimes_{\chains^*B} \chains^*X$ in $\Derived$, we see that $\chains^*(\po X) \cong A'$.

Because both homotopy pullback squares are squares of modules under $R$ (i.e. modules with maps from $R$), the isomorphism $\chains^*(\po X) \cong A'$ commutes with the unit morphisms $R \to A'$ and $R \to \chains^*(\po X)$.
\end{proof}

\begin{definition}
Define a relation $\smile_B$ on $\ffpos/B$. For $X,Y\in \ffpos/B$ we say that $X\smile_B Y$ if:
\begin{enumerate}
\item $Y \approx_B \po X$, or
\item there exists $\phi:X\to Y$ in $\spaces/B$ which induces an injection in cohomology.
\end{enumerate}
We extend $\smile_B$ to be the equivalence relation generated from these conditions. Taking equivalence classes under $\smile_B$ yields a new partially ordered set, $\ffpos'/B$. However, in $\ffpos'/B$ it could still be that for two elements $X$ and $Y$ in $\ffpos'/B$ both $X \preceq Y$ and $Y \preceq X$ but $X \neq Y$. We therefore take equivalence classes again, this time identifying elements $X,Y\in\ffpos'/B$ satisfying $X\preceq Y \preceq X$. The \emph{reduced poset of finitely fibred spaces over $B$}, denoted $\overline{\ffpos/B}$, is the resulting poset.
\end{definition}

\begin{theorem}
\label{the: Reduced finitely fibred spaces}
Suppose that $H^*B$ is Noetherian and that $\Derived$ is stratified by the action of $H^*B$. Then $\Psi$ induces an injection of posets
\[ \Psi: \overline{\ffpos/B} \longrightarrow
   \left\{
     \begin{array}{c}
        \text{Thick subcategories}\\
        \text{of }\Derived^\compact\\
     \end{array}
   \right\}
\]
\end{theorem}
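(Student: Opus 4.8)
The plan is to show that the order-preserving map $\Psi$ of Lemma~\ref{lem: Map from finitely fibred poset} is constant on each equivalence class of the relation $\smile_B$; being also order-preserving on $\ffpos/B$, it then descends to an order-preserving map $\overline\Psi$ on $\overline{\ffpos/B}$ (the passage through the second quotient is automatic: if $[X]\preceq[Y]\preceq[X]$ in $\ffpos'/B$ then order-preservation already forces $\Psi(X)=\Psi(Y)$, so $\overline\Psi$ is well defined). It then remains to prove $\overline\Psi$ injective. The key consequence of stratification I would use is the following. For a finitely fibred space $Z$ the object $\chains^*Z$ is compact, and a d\'evissage over the compact generator $R=\chains^*B$ shows that $H^*Z=\pi^*(\chains^*Z)$ is a finitely generated graded $H^*B$-module; since $\chains^*Z$ is compact, $\supp_{H^*B}\chains^*Z$ is the (specialization closed) support of this module. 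Feeding this into the classification of thick subcategories (\cite[Theorem 6.1]{BIKstratifying}, applicable because $H^*B$ is Noetherian and $\Derived$ is stratified by it) gives, for finitely fibred $Z,Z'$,
\[ \thick_\Derived(\chains^*Z)\subseteq\thick_\Derived(\chains^*Z')\ \Longleftrightarrow\ \supp_{H^*B}H^*Z\subseteq\supp_{H^*B}H^*Z'. \]

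To see that $\Psi$ is constant on $\smile_B$-classes I would treat the two generating moves. If $Y\approx_B\po X$, Lemma~\ref{lem: Topological realization of R/Isquared} identifies $\chains^*(\po X)$ with $R/I^{\otimes 2}$ (where $I\to R\to\chains^*X$), while by Lemma~\ref{lem: A builds R/I tensor n} the objects $\chains^*X$ and $R/I^{\otimes 2}$ build each other; both are compact, so they finitely build each other, and hence $\Psi(X)=\Psi(\po X)=\Psi(Y)$, the last equality by Lemma~\ref{lem: Map from finitely fibred poset}. If instead there is $\phi\colon X\to Y$ in $\spaces/B$ inducing an injection on cohomology, then $H^*\phi$ realises $H^*Y$ as an $H^*B$-submodule of $H^*X$, so $\supp_{H^*B}H^*Y\subseteq\supp_{H^*B}H^*X$, and the displayed equivalence gives $\thick_\Derived(\chains^*Y)\subseteq\thick_\Derived(\chains^*X)$; together with the reverse inclusion of Lemma~\ref{lem: Map from finitely fibred poset} this yields $\Psi(X)=\Psi(Y)$. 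Thus $\Psi$ descends to $\overline\Psi$ as claimed.

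For injectivity, suppose $X,Y$ are finitely fibred with $\thick_\Derived(\chains^*X)=\thick_\Derived(\chains^*Y)$; I must link $X$ and $Y$ by a chain of $\smile_B$-moves. The idea is to interpose the space $\po^m Y\times_B\po^j X$ for $m,j\gg0$. Iterating Lemma~\ref{lem: Topological realization of R/Isquared} gives $\chains^*(\po^m Y)\cong R/J^{\otimes n}$ and $\chains^*(\po^j X)\cong R/I^{\otimes n'}$ for exponents $n=n(m)$, $n'=n'(j)$ tending to infinity, where $J\to R\to\chains^*Y$ and $I\to R\to\chains^*X$; hence $\chains^*(\po^m Y\times_B\po^j X)\cong R/I^{\otimes n'}\otimes_R R/J^{\otimes n}$, a compact object, so $\po^m Y\times_B\po^j X$ is a finitely fibred space over $B$ (verified as in Lemma~\ref{lem: Topological realization of R/Isquared}). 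Since the two thick subcategories coincide, $\chains^*X$ builds $\chains^*Y$ and conversely, and by Lemma~\ref{lem: A builds R/I tensor n} each of them builds every $R/I^{\otimes p}$ and every $R/J^{\otimes p}$. Applying Lemma~\ref{lem: Retract of R/Itensorn when cellular} with $A=\chains^*Y$ and $M=\chains^*(\po^j X)$ shows that, once $n\gg0$, the map $\chains^*(\po^j X)\to R/J^{\otimes n}\otimes_R\chains^*(\po^j X)$ (which, the relevant identifications being compatible with unit morphisms, is $\chains^*$ of the projection $\po^m Y\times_B\po^j X\to\po^j X$) is injective on $\pi^*$, that is, on cohomology; hence $\po^m Y\times_B\po^j X\smile_B\po^j X$. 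Symmetrically, Lemma~\ref{lem: Retract of R/Itensorn when cellular} with $A=\chains^*X$ and $M=\chains^*(\po^m Y)$ gives, for $n'\gg0$, that $\po^m Y\times_B\po^j X\smile_B\po^m Y$. Finally $X\smile_B\po X\smile_B\cdots\smile_B\po^j X$ and $Y\smile_B\cdots\smile_B\po^m Y$ by move (1) of $\smile_B$. Choosing $m,j$ large enough for both applications, transitivity of $\smile_B$ gives $X\smile_B Y$, so $\overline X=\overline Y$ in $\overline{\ffpos/B}$, and $\overline\Psi$ is injective.

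The step I expect to require the most care is the identification of $\chains^*$ of the two projections out of $\po^m Y\times_B\po^j X$ with the structural maps $u_n\otimes 1$ to which Lemma~\ref{lem: Retract of R/Itensorn when cellular} applies: one must carry the unit-compatible isomorphisms $\chains^*(\po(-))\cong R/(-)^{\otimes 2}$ of Lemma~\ref{lem: Topological realization of R/Isquared} through the iteration and combine them with the equivalence $\chains^*(W\times_B V)\simeq\chains^*W\otimes_{\chains^*B}\chains^*V$, and also confirm that the iterated pullback/pushout constructions stay within the class of finitely fibred EM-type spaces. A subsidiary point worth spelling out is the fact, used in the descent step, that the support of a compact object coincides with the support of its homotopy.
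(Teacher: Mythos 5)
Your descent argument is correct and matches the paper's in all essentials, and the key mechanism you isolate for injectivity (Lemma~\ref{lem: Retract of R/Itensorn when cellular} applied to the projection out of a fibre product with an iterated $\po$-construction, plus the unit-compatible identification of Lemma~\ref{lem: Topological realization of R/Isquared}) is exactly what the paper packages into its Lemma~\ref{lem: Psi yields cohomology injective map}. However, there is a genuine gap in the way you close the injectivity argument. You interpose the single space $\po^m Y\times_B\po^j X$ and want \emph{both} projections to inject on cohomology, which requires two applications of Lemma~\ref{lem: Retract of R/Itensorn when cellular}: one with $A=\chains^*Y$, $M=\chains^*(\po^j X)$, giving a threshold on $n=2^m$ that depends on $j$; and one with $A=\chains^*X$, $M=\chains^*(\po^m Y)$, giving a threshold on $n'=2^j$ that depends on $m$. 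You then say ``choosing $m,j$ large enough for both applications,'' but these two conditions are coupled: you need $2^m$ large relative to a quantity that grows with $j$, and simultaneously $2^j$ large relative to a quantity that grows with $m$. Lemma~\ref{lem: Retract of R/Itensorn when cellular} gives no uniformity of its threshold in $M$, so there is no reason the coupled system has a solution, and in general such systems (e.g.\ $m\ge j+1$, $j\ge m+1$) have none. So as written your $\smile_B$-chain from $X$ to $Y$ through $\po^m Y\times_B\po^j X$ need not exist.

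The paper avoids this by making the argument one-sided and exploiting the \emph{second} quotient in the construction of $\overline{\ffpos/B}$ (the one that identifies $X$ and $Y$ whenever $X\preceq Y\preceq X$), which you treated as merely an automatic formality. Concretely: from $\Psi(X)\subseteq\Psi(Y)$ alone, apply Lemma~\ref{lem: Retract of R/Itensorn when cellular} once with $A=\chains^*Y$ and $M=\chains^*X$ (no $\po^j X$ needed) to get, for $m\gg0$, that $\po^m Y\times_B X\to X$ injects on cohomology, hence $\po^m Y\times_B X\smile_B X$. The other projection $\po^m Y\times_B X\to\po^m Y$ is a map in $\spaces/B$ and so gives $\po^m Y\times_B X\preceq\po^m Y$, and $\po^m Y\smile_B Y$ by move (1). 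Thus $[X]\preceq[Y]$ in $\overline{\ffpos/B}$. Running the same one-sided argument with the roles of $X$ and $Y$ swapped gives $[Y]\preceq[X]$, and the second quotient then forces $[X]=[Y]$. This sidesteps the simultaneous threshold issue entirely, which is precisely why the paper builds the order relation $\preceq$ and that extra quotient into the definition of $\overline{\ffpos/B}$ rather than relying only on the equivalence relation $\smile_B$.

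One smaller point: your claim that ``order-preservation already forces $\Psi(X)=\Psi(Y)$'' when $[X]\preceq[Y]\preceq[X]$ in $\ffpos'/B$ is circular as phrased --- you need to know $\Psi$ is already well-defined and monotone on $\ffpos'/B$ before using its monotonicity, which is exactly what you are in the middle of establishing. The correct order of reasoning is: $\Psi$ is monotone on $\ffpos/B$ (Lemma~\ref{lem: Map from finitely fibred poset}) and constant on $\smile_B$-classes (your descent step), hence descends to a monotone map on $\ffpos'/B$; monotonicity on $\ffpos'/B$ then gives constancy on the mutual-$\preceq$ classes, so it descends to $\overline{\ffpos/B}$. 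This is what the paper means by ``an easy exercise,'' but it is worth getting the logical order right.
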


\begin{remark}
We will see below that for any finitely fibred space $X$ its support $\supp_{H^*B} \chains^*X$ is equal to the support of $H^*X$ as a module over $H^*B$. In light of this, Theorem~\ref{the: Reduced finitely fibred spaces} is a statement relating the category of finitely fibred spaces over $B$ with the support of their cohomology as modules over $H^*B$. Now it is hardly surprising that for two finitely fibred spaces $X$ and $Y$ over $B$, if $\supp_{H^*B} H^*X$ is not contained in $\supp_{H^*B} H^*Y$ then there can be no map $X \to Y$ of spaces over $B$. What is new in Theorem~\ref{the: Reduced finitely fibred spaces} is the description of the relation between $X$ and $Y$ when $\supp_{H^*B} H^*X \subset \supp_{H^*B} H^*Y$, which is essentially Corollary~\ref{cor: Topological consequence of stratification} below.
\end{remark}

To prove Theorem~\ref{the: Reduced finitely fibred spaces} we will need two lemmas. In both we are assuming that $H^*B$ is Noetherian and $\Derived$ is stratified by the action of $H^*B$.

\begin{lemma}
\label{lem: Support of finitely fibred spaces}
Let $X \in \ffpos/B$ then $\supp_{H^*B} \chains^*X$ is equal to the support of $H^*X$ as an $H^*B$-module.
\end{lemma}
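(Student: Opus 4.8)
The plan is to compare two notions of support for the compact object $\chains^*X \in \Derived = \Derived(\chains^*B)$: the Benson--Iyengar--Krause support $\supp_{H^*B}\chains^*X$ defined via the local cohomology functors $\Gamma_\pp$, and the classical support of the graded $H^*B$-module $H^*X = \pi^*(\chains^*X)$, i.e. $\{\pp \in \spec H^*B \mid (H^*X)_\pp \neq 0\}$. Since $\chains^*X$ is compact and $H^*B$ is Noetherian, this should be an instance of a general comparison theorem in the BIK framework; the main work is to check the hypotheses and cite the right statement. Concretely, for a compact object $C$ in a triangulated category stratified by a Noetherian ring $S$ with $\pi^*(\text{unit})$ finitely generated over $S$, one has $\supp_S C = \supp_S(\pi^* C)$ where the right-hand side is the usual module-theoretic support (this uses that $\Gamma_\pp C \neq 0 \iff (\pi^*C)_\pp \neq 0 \iff \pp \in \supp_S \pi^*C$ when $\pi^*C$ is a finitely generated $S$-module).

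First I would record that $H^*X = \pi^*(\chains^*X)$ is a finitely generated $H^*B$-module: this is exactly the content of Lemma~\ref{lem: X is Noetherian}-type reasoning applied to the fibration structure, or more directly it follows because $\chains^*X$ is compact in $\Derived$ and $H^*B$ is Noetherian, so $\hom^*_\Derived(\chains^*X,\chains^*X)$ and hence (via the unit map, using Lemma~\ref{lem: Ring object maps surject module homotopy} or simply the fact that a compact object is finitely built by $R$) $\pi^*(\chains^*X)$ is a finitely generated $H^*B$-module. Second, I would use the description of $\Gamma_\pp$ from Section~\ref{sec: Stratification of a derived category}: $\Gamma_\pp M = \Gamma_{\vv(\pp)}(M_\pp)$, together with $\pi^*(M_\pp) = (\pi^* M)_\pp$ (\cite[Theorem 4.7]{BIKsupport}) and the fact that for an object whose homotopy is a finitely generated module over the Noetherian ring $H^*B$, the colocalization $\Gamma_{\vv(\pp)}$ at the closed set $\vv(\pp)$ is nonzero on $M_\pp$ precisely when $(\pi^*M)_\pp \neq 0$. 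Chaining these equivalences gives $\pp \in \supp_{H^*B}\chains^*X \iff (H^*X)_\pp \neq 0 \iff \pp \in \supp_{H^*B} H^*X$.

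The cleanest route, and the one I expect the paper to take, is to invoke \cite[Theorem 5.5]{BIKsupport} (the identification of $\supp$ with the cohomological support for compact objects, or the "support equals support of cohomology" statement), noting that its hypotheses hold here because $\Derived$ has a single compact generator $R = \chains^*B$ with $\pi^*R = H^*B$ Noetherian, and $\chains^*X$ is compact with $\pi^*(\chains^*X) = H^*X$ finitely generated over $H^*B$. The main obstacle — really the only non-bookkeeping point — is establishing that $H^*X$ is a finitely generated $H^*B$-module in this generality (for an arbitrary finitely fibred space, not just one arising from an explicit tower of odd sphere fibrations); here one uses the defining property that $\chains^*X$ is compact in $\Derived(\chains^*B)$, so it is finitely built by $R$, whence $\pi^* (\chains^* X)$ is obtained from finitely many shifts of $H^*B$ by finitely many extensions and retracts, and is therefore finitely generated since $H^*B$ is Noetherian. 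Once finite generation is in hand, the equality of supports is a direct translation through the definitions of $\Gamma_\pp$ and localization at $\pp$.
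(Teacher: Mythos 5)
Your proposal matches the paper's proof essentially exactly: establish that $H^*X$ is a finitely generated $H^*B$-module from compactness of $\chains^*X$ together with Noetherianness of $H^*B$, invoke the Benson--Iyengar--Krause identification of support with cohomological (small) support for objects with finitely generated cohomology (the paper cites \cite[Theorem 5.4]{BIKsupport}), and conclude by noting that small support coincides with ordinary support for finitely generated modules. The only discrepancy is the guessed reference number, and your extra unwinding of $\Gamma_\pp$ is optional scaffolding the paper omits.
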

\begin{proof}
Since $H^*B$ is Noetherian and $\chains^*X$ is a compact object in $\Derived$ we see that $H^*X$ must be a finitely generated $H^*B$-module. By~\cite[Theorem 5.4]{BIKsupport} this implies that $\supp_{H^*B} \chains^*X$ is equal to the small support of $H^*X$ as an $H^*B$-module (see~\cite{BIKsupport} for the definition of the small, or cohomological, support). Since $H^*X$ is a finitely generated module, its small support is the same as the usual support, i.e the set $\{\pp \in \spec H^*B \ | \ (H^*X)_\pp \neq 0 \}$.
\end{proof}

\begin{lemma}
\label{lem: Psi yields cohomology injective map}
Let $X$ and $Y$ be finitely fibred spaces over $B$ such that $\Psi(X) \subset \Psi(Y)$. Then for $m>>0$ the projection $\po^m Y \times_B X \to X$ induces an injection on cohomology.
\end{lemma}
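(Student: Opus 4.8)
The plan is to reduce this topological statement directly to the algebraic Lemma~\ref{lem: Retract of R/Itensorn when cellular}. Write $A=\chains^*Y$ and $M=\chains^*X$. Since $X$ and $Y$ are finitely fibred over $B$, both $A$ and $M$ are compact objects of $\Derived$, and $A$ is an $R$-algebra via $\chains^*(Y\to B)$. Let $I$ be defined by the exact triangle $I\to R\to A$ as in Definition~\ref{def: R mod I tensor n}. The hypothesis $\Psi(X)\subset\Psi(Y)$ says precisely that $\chains^*X\in\thick_\Derived(\chains^*Y)\subseteq\loc_\Derived(A)$, i.e.\ $A$ builds $M$. So all the hypotheses of Lemma~\ref{lem: Retract of R/Itensorn when cellular} are in place, and I may conclude that for $n\gg 0$ the morphism $u_n\otimes 1_M\colon M\to R/I^{\otimes n}\otimes_R M$ is injective on $\pi^*$.

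The first thing to do is to identify $\chains^*(\po^m Y)$. Each iterate $\po^j Y$ is again a finitely fibred space over $B$ (take the structure map to $B$ to be a fibration at each stage and apply the construction preceding Lemma~\ref{lem: Topological realization of R/Isquared} repeatedly). I would then run a straightforward induction on $m$ using Lemma~\ref{lem: Topological realization of R/Isquared}: the fibre of $R\to R/I^{\otimes 2^{m-1}}$ is $I^{\otimes 2^{m-1}}$ by Definition~\ref{def: R mod I tensor n}, so the lemma applied to $\po^{m-1}Y$ gives an isomorphism in $\Derived$
\[ \chains^*(\po^m Y)\;\cong\;R/(I^{\otimes 2^{m-1}})^{\otimes 2}\;=\;R/I^{\otimes 2^m}, \]
compatible with the unit morphisms out of $R$. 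In particular the condition $m\gg 0$ translates into $n=2^m\gg 0$.

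Next I would identify the morphism on cochains induced by the projection. Taking $f\colon X\to B$ to be a fibration, the square with corners $\po^m Y\times_B X$, $X$, $\po^m Y$ and $B$ is a homotopy pullback of EM-type spaces, so applying $\chains^*$ and the pullback formula of Section~\ref{sec: Spaces and cochains} identifies $\chains^*(\po^m Y\times_B X)$ with $\chains^*(\po^m Y)\otimes_R M$ in such a way that the map $\chains^*X\to\chains^*(\po^m Y\times_B X)$ induced by the projection $\po^m Y\times_B X\to X$ becomes the structure map $M\cong R\otimes_R M\xrightarrow{u\otimes 1_M}\chains^*(\po^m Y)\otimes_R M$, where $u\colon R\to\chains^*(\po^m Y)$ is the unit. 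Composing with the isomorphism of the previous paragraph, this map is exactly $u_{2^m}\otimes 1_M\colon M\to R/I^{\otimes 2^m}\otimes_R M$. Since $\pi^*(\chains^*X)=H^*X$ and $\pi^*$ of the projection-induced map is $H^*$ of the projection, the conclusion of Lemma~\ref{lem: Retract of R/Itensorn when cellular} for $n=2^m\gg 0$ is precisely the asserted injectivity on cohomology, completing the argument.

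The only delicate point will be the bookkeeping in the last two steps: verifying that the iterated isomorphism $\chains^*(\po^m Y)\cong R/I^{\otimes 2^m}$ respects the relevant unit morphisms, and that the projection genuinely realizes $u_{2^m}\otimes 1_M$. Both follow by the same reasoning used in Lemma~\ref{lem: Topological realization of R/Isquared}, namely that the homotopy-pullback squares involved are squares of modules under $R$, so there is no substantive obstacle; all the real content is already isolated in Lemmas~\ref{lem: Topological realization of R/Isquared} and~\ref{lem: Retract of R/Itensorn when cellular}.
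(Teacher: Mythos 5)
Your proof is correct and follows essentially the same route as the paper: reduce to Lemma~\ref{lem: Retract of R/Itensorn when cellular} with $A=\chains^*Y$, $M=\chains^*X$, take $n=2^m$, identify $\chains^*(\po^m Y)\cong R/I^{\otimes 2^m}$ by iterating Lemma~\ref{lem: Topological realization of R/Isquared}, and recognize the projection-induced map on cochains as $u_{2^m}\otimes 1_M$ via the pullback formula. You simply spell out the inductive bookkeeping that the paper compresses into a single sentence.
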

\begin{proof}
Let $A=\chains^*Y$ and define $I$ by the exact triangle $I \to R \to A$. Since $\chains^*X$ is compact and built by $A$, then by Lemma~\ref{lem: Retract of R/Itensorn when cellular} for some $n>>0$ the morphism $\chains^*X \to R/I^{\otimes n} \otimes_R \chains^*(X)$, given by tensoring with the unit morphism $R\to R/I^{\otimes n}$, induces an injection on $\pi^*$. We may choose any $n$ which is large enough, so we choose $n$ such that $n=2^m$ for a large enough $m$.

Applying Lemma~\ref{lem: Topological realization of R/Isquared} repeatedly we arrive at a space $\po^m Y$ over $B$ such that the map $\chains^*B \to \chains^*(\po^m Y)$ is isomorphic to the unit morphism $v:R\to R/I^{\otimes 2^m}$. The projection $p:\po^m Y \times_B X \longrightarrow X$ yields a morphism $\chains^*(p)$ which is is isomorphic in $\Derived$ to $v \otimes 1_{\chains^*(X)}$ and so, by Lemma~\ref{lem: Retract of R/Itensorn when cellular}, $p$ induces an injection on homology.
\end{proof}

The upshot of the last two lemmas is the following corollary mentioned in the introduction.

\begin{corollary}
\label{cor: Topological consequence of stratification}
Suppose that $H^*B$ is Noetherian and that $\Derived(\chains^*B)$ is stratified by the natural action of $H^*B$. Let $X$ and $Y$ be finitely fibred spaces over $B$. If $\supp_{H^*B} H^*X \subset \supp_{H^*B} H^*Y$ then for $m>>0$ the projection $\po^m Y \times_B X \to X$ induces an injection on cohomology.
\end{corollary}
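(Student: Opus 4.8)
The plan is to deduce the corollary directly from Lemma~\ref{lem: Psi yields cohomology injective map}, the only real task being to upgrade the given containment of supports into the containment of thick subcategories $\Psi(X)\subseteq\Psi(Y)$; once that is in hand, Lemma~\ref{lem: Psi yields cohomology injective map} supplies the map and the injectivity on cohomology.

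First I would translate the hypothesis into one about categorical supports. By Lemma~\ref{lem: Support of finitely fibred spaces}, since $H^*B$ is Noetherian, for any finitely fibred space $Z$ over $B$ the categorical support $\supp_{H^*B}\chains^*Z$ agrees with the support of the finitely generated $H^*B$-module $H^*Z$. Being the variety of the annihilator of a finitely generated module over a Noetherian ring, this set is Zariski closed, and in particular specialization closed in $\spec H^*B$. Hence the assumption $\supp_{H^*B}H^*X\subset\supp_{H^*B}H^*Y$ is exactly the inclusion $\supp_{H^*B}\chains^*X\subseteq\supp_{H^*B}\chains^*Y$ of specialization closed subsets of $\supp_{H^*B}\Derived$.

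Next I would invoke the classification of thick subcategories that holds under stratification when the acting ring is Noetherian (the form of \cite[Theorem 6.1]{BIKstratifying} recalled in Section~\ref{sec: Stratification of a derived category}, applicable here since $H^*B$ is Noetherian and $\Derived$ is stratified by its action): taking support is an inclusion-preserving bijection from thick subcategories of $\Derived^\compact$ to specialization closed subsets of $\supp_{H^*B}\Derived$. Since $\chains^*X$ and $\chains^*Y$ are compact, the support of the thick subcategory generated by either equals the support of that generator — support is subadditive on exact triangles and passes to retracts, so $\supp_{H^*B}\thick_\Derived(\chains^*X)\subseteq\supp_{H^*B}\chains^*X$, while the reverse inclusion is clear. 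Thus under the bijection $\Psi(X)=\thick_\Derived(\chains^*X)$ corresponds to $\supp_{H^*B}\chains^*X$ and $\Psi(Y)$ to $\supp_{H^*B}\chains^*Y$, and the inclusion of supports from the previous step yields $\Psi(X)\subseteq\Psi(Y)$. Feeding this into Lemma~\ref{lem: Psi yields cohomology injective map} gives, for $m\gg 0$, the cohomology injection induced by $\po^m Y\times_B X\to X$, which is the assertion.

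The two substantive ingredients — matching $\supp_{H^*B}\chains^*Z$ with the module support of $H^*Z$, and realizing the needed containment of thick subcategories through the explicit geometric map $\po^m Y\times_B X\to X$ — are already in place in Lemmas~\ref{lem: Support of finitely fibred spaces} and~\ref{lem: Psi yields cohomology injective map}, so no genuine obstacle remains; the only point demanding a little care is checking that the support of a compact object is specialization closed, which is precisely what Lemma~\ref{lem: Support of finitely fibred spaces} provides via finite generation of $H^*Z$ over the Noetherian ring $H^*B$, so that \cite[Theorem 6.1]{BIKstratifying} may be applied on the nose.
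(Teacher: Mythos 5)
Your argument is correct and is essentially the paper's own proof: the corollary is stated there as the "upshot" of Lemma~\ref{lem: Support of finitely fibred spaces} and Lemma~\ref{lem: Psi yields cohomology injective map}, with the intermediate passage from the support inclusion to $\Psi(X)\subseteq\Psi(Y)$ left implicit exactly as you have made it explicit via the classification of thick subcategories under stratification. Your care about specialization closedness and $\supp_{H^*B}\thick_\Derived(\chains^*X)=\supp_{H^*B}\chains^*X$ is the right way to fill that gap (one could equally use the localizing-subcategory classification plus compactness), so nothing is missing.
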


\begin{proof}[Proof of Theorem~\ref{the: Reduced finitely fibred spaces}]
We first show that for $X$ and $Y$ in $\ffpos/B$ if $X \smile_B Y$ then $\Psi(X) = \Psi(Y)$. This is evidently true if $X \approx_B Y$. Suppose $\phi:X \longrightarrow Y$ is a map in $\spaces/B$ which induces an injection on cohomology. We already saw that $\Psi(X) \subseteq \Psi(Y)$ and therefore $\supp_{H^*B} \chains^*(X) \subseteq \supp_{H^*B} \chains^*(Y)$. Since $\phi$ induces an injection on cohomology it follows from Lemma~\ref{lem: Support of finitely fibred spaces} that $\supp_{H^*B} \chains^*(Y) \subseteq \supp_{H^*B} \chains^*(X)$. Since $\Derived$ is stratified by $H^*B$ this implies that $\Psi(Y) \subseteq \Psi(X)$.

From Lemma~\ref{lem: Topological realization of R/Isquared} and Lemma~\ref{lem: A builds R/I tensor n} we see that $\Psi(X)=\Psi(\po^m X)$ for every $m$. All together we so far have shown that $\Psi$ descends to a well defined map of posets from $\ffpos'/B$ to thick subcategories of $\Derived^\compact$. To show that $\Psi$ descends further to define a map from $\overline{\ffpos/B}$ to thick subcategories of $\Derived^\compact$ is an easy exercise.

It remains to show that the resulting map $\Psi$ is an injection. Suppose that $\Psi(X) \subset \Psi(Y)$ then it suffices to show that $X \preceq Y$ in $\overline{\ffpos/B}$, but this follows immediately from Lemma~\ref{lem: Psi yields cohomology injective map}.
\end{proof}

\section{Examples}
\label{sec: Examples}

\subsection{A trivial example}
It is worthwhile noting that for an EM-type space $B$ with finite dimensional cohomology the derived category $\Derived(\chains^*B)$ is minimal as a localizing subcategory. We sketch an argument for this fact. First, $k$ is a generator for $\Derived(\chains^*B)$, this is because $k$ builds $\chains^*B$ by~\cite[Example 5.5]{DwyerGreenleesIyengar}. Hence if $M$ is a non-zero object in $\Derived(\chains^*B)$ then $k \otimes_R M$ is not zero. Since $k \otimes_R M$ is an object of $\Derived(k)$ it is a coproduct of copies of suspensions of $k$. In particular this implies that
\[ M \builds k\otimes_R M \builds k\]

\subsection{Classifying spaces}
\label{sub: Classifying spaces}
The cohomology of the classifying space $BSU(n)$ is the graded polynomial ring $k[x_1,...,x_n]$ where the codegree of $x_i$ is $2i$. So by Theorem~\ref{thm: Stratification of algebra with polynomial homotopy} the derived category $\Derived(\chains^*BSU(n))$ is stratified by the action of the cohomology of $BSU(n)$.

Note that in the rational case this already follows from~\cite[Theorem 5.2]{BIKgroupStratifying}, since the rational cochain algebra on $BSU(n)$ is formal.

\subsection{A non-formal rational example}
\label{sub: A rational example}
This example is taken from~\cite[Example A.6]{GreenleesHessShamir}. Let $X$ be the rational space whose Sullivan algebra has the form
\[ R=(\Lambda(u_2,v_2,x_3,y_3);\ dx=u^2 ,\ dy=uv)\]
where subscripts denote the codegrees of these generators. There is a rational fibration sequence
\[ S^3 \times S^3 \longrightarrow X \longrightarrow \mathbb{C}P^\infty \times \mathbb{C}P^\infty \]
showing that $X$ is an soci space. Hence Theorem~\ref{the: Stratification of cochains on soci} holds and $\Derived(\chains^*X)$ is stratified by the cohomology of $X$. This cohomology ring is
\[ H^*X = \Q[u,v,t]/(u^2,uv,ut,t^2)\]
where the codegree of $t$ is 5.

Note that $R$ is not formal and therefore does not fall under~\cite[Theorem 8.1]{BIKstratifying}. We sketch an argument showing that $R$ is not equivalent to any formal dga. It is shown  in~\cite{GreenleesHessShamir} that $\ext^n_R(\Q,R)$ is $\Q$ when $n=4$ and is zero elsewhere. Suppose, by way of contradiction, that $R$ is equivalent to the formal dga $A=H^*R$. Then there is a map $f:\Sigma^{-2}\Q \to A$ sending the generator of $\Q$ to $u$. Clearly $f$ is a map of dg-$A$-modules and so it is a non-trivial element of $\ext^2_A(\Q,A)$, in contradiction.

\subsection{Davis-Januszkiewicz spaces with complete intersections cohomology}
We shall give here a quick review of these spaces, for a comprehensive review of this material see~\cite{BuchstaberPanovBook}.

Let $K$ be an abstract simplicial complex on the set $[m]=\{1,...,m\}$. Let $BT^m$ be the classifying space of the $m$-torus, i.e. $(\mathbb{C} P^\infty)^m$. For $i \in [m]$ denote by $BT_i$ the $i$'th factor of $BT^m$. Given a subset $\sigma \subseteq [m]$ denote by $BT_\sigma$ the product $\prod_{i\in \sigma} BT_i$. Finally, define the \emph{Davis-Januszkiewicz space} associated to $K$ as:
\[ DJ(K)=\bigcup_{\sigma \in K} BT_\sigma \ \subset \ BT^m\]

Let $k[x]$ denote the polynomial ring $k[x_1,...,x_m]$ where the generators are in codegree 2. Let $I_K$ be the ideal of $k[x]$ generated by the monomials corresponding to simplexes not in $K$. The ring $k(K)=k[x]/I_k$ is the \emph{graded Stanley-Reisner ring} of $K$. It is well known that the cohomology of $DJ(K)$ is isomorphic to the graded Stanley-Reisner ring of $K$.

There is the following naturality property. A map $f:K \to K'$ of simplicial complexes on $[m]$ induces a map $f:DJ(K) \longrightarrow DJ(K')$ which commutes with the inclusion in $BT^m$. The induced map on the cohomology rings $k(K') \to k(K)$ is the appropriate map of Stanley-Reisner rings.

\begin{proposition}
\label{pro: DJ spaces which are soci}
Let $K$ be a simplicial complex on $[m]$ and let $I_K$ be the appropriate ideal of $k(K)$. Suppose that $I_K$ is generated by a regular sequence $(y_1,..,y_n)$. Then $DJ(K)$ is soci and the derived category $\Derived(\chains^*DJ(K))$ is stratified by the graded Stanley-Reisner ring $k(K)$ - which is a complete intersections ring.
\end{proposition}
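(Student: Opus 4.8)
The plan is to show that $DJ(K)$ is an soci space and then invoke Theorem~\ref{the: Stratification of cochains on soci}; the hypothesis on $I_K$ enters only through the combinatorics of $K$. Recall that $I_K\subseteq k[x]=k[x_1,\dots,x_m]$ is the squarefree monomial ideal generated by the minimal non-faces of $K$, that $k(K)=k[x]/I_K$, and that the hypothesis is that $I_K=(y_1,\dots,y_n)$ for some regular sequence $(y_1,\dots,y_n)$ in $k[x]$. This already makes $k(K)=k[x]/(y_1,\dots,y_n)$ a complete intersections ring, since its localization at the irrelevant ideal is a regular local ring modulo a regular sequence; so the substantive point is that $DJ(K)$ is soci.

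The first step would be algebraic. A monomial ideal that can be generated by a regular sequence is a complete intersection, and a monomial complete intersection has its minimal monomial generators supported on pairwise disjoint sets of variables (and these generators then themselves form a regular sequence). So, replacing $(y_1,\dots,y_n)$ by the minimal non-face monomials and deleting any ghost vertices $a$ (those with $\{a\}\notin K$, i.e.\ with $x_a\in I_K$) -- which only shrinks the ambient torus and alters neither $DJ(K)$ up to homotopy nor the conclusion -- I may assume the minimal non-faces of $K$ are pairwise disjoint subsets $\sigma_1,\dots,\sigma_n\subseteq[m]$ with $|\sigma_i|\ge 2$ and $y_i=\prod_{j\in\sigma_i}x_j$. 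Writing $T=[m]\setminus(\sigma_1\cup\cdots\cup\sigma_n)$ and letting $\Delta^{V}$, $\partial\Delta^{V}$ denote the full simplex and its boundary on a vertex set $V$, one checks on minimal non-faces that this is precisely the statement that $K$ decomposes as a join
\[ K=\partial\Delta^{\sigma_1}\ast\cdots\ast\partial\Delta^{\sigma_n}\ast\Delta^{T}.\]

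Next I would pass to Davis-Januszkiewicz spaces. Forming the moment-angle complex and then the Borel construction both take joins to products, so $DJ(L_1\ast L_2)\simeq DJ(L_1)\times DJ(L_2)$ (see~\cite{BuchstaberPanovBook}); together with $DJ(\Delta^{V})=BT^{V}$ this gives
\[ DJ(K)\simeq DJ(\partial\Delta^{\sigma_1})\times\cdots\times DJ(\partial\Delta^{\sigma_n})\times BT^{T}.\]
Each factor $DJ(\partial\Delta^{\sigma_i})$ is the total space of the Borel fibration $\mathcal{Z}_{\partial\Delta^{\sigma_i}}\to DJ(\partial\Delta^{\sigma_i})\to BT^{\sigma_i}$, whose fibre is the moment-angle complex of the boundary of a simplex on $|\sigma_i|$ vertices, namely the sphere $S^{2|\sigma_i|-1}$; since $|\sigma_i|\ge 2$ this sphere is simply connected and $2|\sigma_i|-1$ is odd and at least $3$. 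Now set $B=X_0=BT^{m}$, which has polynomial cohomology on finitely many even-degree generators, and let $X_i$ be the pullback of the Borel fibration over $BT^{\sigma_i}$ along the projection $X_{i-1}\to BT^{\sigma_i}$; inductively $X_i=DJ(\partial\Delta^{\sigma_1})\times\cdots\times DJ(\partial\Delta^{\sigma_i})\times BT^{[m]\setminus(\sigma_1\cup\cdots\cup\sigma_i)}$, the map $S^{2|\sigma_i|-1}\to X_i\to X_{i-1}$ is a fibration by an odd simply-connected sphere, and $X_n\simeq DJ(K)$. All spaces involved are simply connected of finite type, hence of EM-type after the implicit completion, so this data exhibits $DJ(K)$ as soci in the sense of Definition~\ref{def: Soci}; Theorem~\ref{the: Stratification of cochains on soci} then shows $\Derived(\chains^*DJ(K))$ is stratified by $H^*(DJ(K))=k(K)$.

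The step I expect to be the main obstacle is the first, algebraic one: extracting the join decomposition of $K$ from the bare hypothesis that $I_K$ is generated by a regular sequence. This relies on the structure theory of monomial complete intersections, and it also carries the mildly delicate bookkeeping of ghost vertices -- which is exactly what forces one to shrink the ambient torus and so to take the polynomial base $B$ on the correct vertex set. Everything downstream -- the product formula for $DJ$ of a join, the identification of $\mathcal{Z}_{\partial\Delta^{d-1}}$ with $S^{2d-1}$, and the tower of pullback fibrations -- is standard moment-angle complex theory.
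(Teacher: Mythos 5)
Your proof is correct and rests on the same algebraic observation that drives the paper's argument: a monomial ideal generated by a regular sequence is a complete intersection whose minimal monomial generators have pairwise disjoint supports, and these generators themselves form a regular sequence. Where the two arguments diverge is in how they exploit this combinatorially. The paper proceeds by induction on $n$, at each step passing from $K$ to a larger complex $K'$ obtained by filling in the faces containing $\sigma(\alpha_n)$ and identifying the homotopy fibre of the inclusion $DJ(K)\to DJ(K')$ via an Eilenberg--Moore spectral sequence computation. You instead make the join decomposition $K=\partial\Delta^{\sigma_1}\ast\cdots\ast\partial\Delta^{\sigma_n}\ast\Delta^{T}$ explicit, apply the standard product formula $DJ(L_1\ast L_2)\simeq DJ(L_1)\times DJ(L_2)$, and assemble the soci tower from pullbacks of the Borel fibrations $\mathcal{Z}_{\partial\Delta^{\sigma_i}}\to DJ(\partial\Delta^{\sigma_i})\to BT^{\sigma_i}$ with fibre $S^{2|\sigma_i|-1}$. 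Your version is arguably cleaner: it surfaces the combinatorial structure of $K$ that the paper's $K\mapsto K'$ construction uses implicitly (and, as written, the paper's $K'$ is not obviously closed under subsets, though the intended complex is clear), and it replaces the spectral sequence with the standard identification of the moment-angle complex of a simplex boundary. You are also right to flag the ghost-vertex issue: a singleton minimal non-face would produce an $S^1$ fibre, which Definition~\ref{def: Soci} excludes, and deleting ghost vertices -- which changes neither $DJ(K)$ nor $k(K)$, and preserves the regular sequence hypothesis since the generators have disjoint supports -- guarantees $|\sigma_i|\ge 2$. The paper does not address this edge case (and its stated fibre dimension $2|\alpha_n|+1$ appears to be a typo for $2|\alpha_n|-1$: the inclusion $DJ(\partial\Delta^{\{1,2\}})\to BT^{2}$ has fibre $S^3$, not $S^5$).
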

\begin{proof}
We start with a bit of notation. Given a sequence $\alpha=(\alpha^1,...,\alpha^m)$ of natural numbers denote by $x^\alpha$ the monomial $x_1^{\alpha^1} x_2^{\alpha^2}\cdots x_m^{\alpha^m}$. On the other hand, given a simplex $\sigma \in [m]$ we define $x^\sigma$ to be the monomial $x^\alpha$ where $\alpha^j=1$ if $j\in \sigma$ and otherwise $\alpha^j=0$.

The proof goes by induction on $n$ -- the number of generators in a regular sequence generating $I_K$. For $n=0$ the simplicial complex $K$ is empty and therefore $H^*DJ(K)$ is a graded polynomial algebra.

Suppose $n>0$ and $I_K=(x^{\alpha_1},...,x^{\alpha_j})$, without loss of generality we can assume that $x^{\alpha_1},...,x^{\alpha_l}$ are a minimal generating set for $I$ (by removing extraneous monomials). In addition we know that $I_K=(y_1,...,y_n)$ and that $y_1,...,y_n$ is a regular sequence. This implies that $y_1,...,y_n$ is a minimal generating set for $I$. Hence $l=n$.

Let $\kdim$ denote the Krull dimension of a ring. Because of the minimality of $n$:
\[\kdim k(K)=\kdim k[x]/(y_1,...,y_n)=m-n\]
Now for any commutative $k$-algebra $A$, $\kdim A/(x^\alpha)$ is either $\kdim A -1$, if $x^\alpha$ is regular, or $\kdim A$ otherwise. This implies that $x^{\alpha_1},...,x^{\alpha_n}$ is a regular sequence.

We next show that for $i\neq j$ the monomials $x^{\alpha_i}$ and $x^{\alpha_j}$ have disjoint sets of variables appearing in them (which severely limits the shape of the simplicial complex $K$). The reason is quite simple, suppose $i<j$ and suppose $x_t$ appears in both monomials, then $y=x^{\alpha_i}/x_t$ satisfies $y \cdot x^{\alpha_j} \in (x^{\alpha_i})$ which contradicts the fact that the sequence is regular.

Given $\alpha \in \{0,1\}^m$ let the simplex $\sigma(\alpha) \in [m]$ be defined by $\sigma(\alpha)=\{j| \alpha^j=1\}$. Note that for every $i$, $\alpha_i$ is an element of $\{0,1\}^m$, because $x^{\alpha_i}$ was induced by a simplex of $[m]$. Now define a new simplicial complex $K'$ on $[m]$ by:
\[K'=K\cup \{\sigma \ | \ x^\sigma\in (x^{\alpha_n}) \} \]

The following properties are obvious. First $K \subset K'$, which implies there is a natural map (in fact an inclusion) $DJ(K) \longrightarrow DJ(K')$. Second $I_{K'}=(x^{\alpha_1},...,x^{\alpha_{n-1}})$ and the sequence $(x^{\alpha_1},...,x^{\alpha_{n-1}})$ is a regular sequence.

We conclude that $H^*(DJ(K))=H^*(DJ(K'))/(x^{\alpha_n})$ and $x^{\alpha_n}$ is a regular element of $H^*(DJ(K'))$. Let $F$ be the homotopy fiber of the map $DJ(K) \longrightarrow DJ(K')$. Using the Eilenberg-Moore spectral sequence it is easy to see that the cohomology of $F$ is isomorphic to the cohomology of a sphere of degree $2|\alpha_n|+1$ where $|\alpha|=\Sigma_i \alpha^i$. Thus, after appropriate localization of all the spaces involved, we have an odd spherical fibration:
\[ S^{2|\alpha_n|+1} \longrightarrow DJ(K) \longrightarrow DJ(K') \]
By the induction assumption $K'$ is soci, hence so is $K$.
\end{proof}

\bibliographystyle{amsplain}    
\bibliography{bib2010}          

\end{document}